\documentclass{article}

\usepackage[utf8]{inputenc}
\usepackage{amsmath, amssymb}
\usepackage{amsthm}
\usepackage{todonotes}
\usepackage{framed}
\usepackage{makecell}
\usepackage{subcaption}
\usepackage{mathtools}
\usepackage{comment}
\usepackage{bm}
\usepackage{xspace}
\usepackage{multirow}
\usepackage[T1]{fontenc}

\usepackage[top=1.8cm,left=2cm,right=2cm,bottom=2cm]{geometry}

\renewcommand{\k}{K}
\newcommand{\X}{\mathcal{X}}
\newcommand{\Xstatic}{\mathcal{X}^{\textsc{sta}}}
\newcommand{\Xstaticb}{\tilde{\mathcal{X}}^{\textsc{sta}}}
\newcommand{\Xtwostage}{\mathcal{X}^{\textsc{two}}}
\newcommand{\U}{\mathcal{U}}
\newcommand{\W}{\mathcal{W}}
\newcommand{\Y}{\mathcal{Y}}
\newcommand{\Xc}{\mathcal{X}_c}
\newcommand{\Uc}{\mathcal{U}_c}
\newcommand{\Ucx}{\mathcal{U}_c(x)}
\newcommand{\Yc}{\mathcal{Y}_c}
\newcommand{\Xd}{\mathcal{X}_d}
\newcommand{\Ud}{\mathcal{U}_d}
\newcommand{\Udx}{\mathcal{U}_d(x)}
\newcommand{\barUd}{\bar{\mathcal{U}}_d}
\newcommand{\Yd}{\mathcal{Y}_d}
\newcommand{\R}{\mathbb{R}}
\newcommand{\Z}{\mathbb{Z}}

\newcommand{\Q}{\mathcal{Q}}

\renewcommand{\S}{\mathcal{S}}
\newcommand{\one}{\mathbf{1}}
\newcommand{\zero}{\mathbf{0}}

\newcommand{\mx}{{m_x}}
\newcommand{\my}{{m_y}}
\newcommand{\muu}{{m_u}}

\newcommand{\ncx}{{n_{\check x}}}
\newcommand{\ncy}{{n_{\check y}}}

\newcommand{\nxt}{{n_x}}

\newcommand{\nut}{{n_u}}

\newcommand{\nyt}{{n_y}}
\newcommand{\nzc}{{n_z^c}}
\newcommand{\nzd}{{n_z^d}}
\newcommand{\nzt}{{n_z}}

\newcommand{\PH}{\textsf{PH}}
\newcommand{\PSPACE}{\textsf{PSPACE}}
\renewcommand{\P}{\mathsf{P}}

\newcommand{\NP}{\mathsf{NP}}
\newcommand{\coNP}{\mathsf{coNP}}

\newcommand{\bilevel}{\textsc{Bilevel}\xspace}

\newcommand{\twostage}{\textsc{Two-Stage}\xspace}
\newcommand{\static}{\textsc{Static}\xspace}
\newcommand{\finiteadapt}{\textsc{Finite-Adaptability}\xspace}
\newcommand{\Kadapt}[1]{\textsc{$#1$-Adaptability}\xspace}
\newcommand{\boundedtwostage}{\textsc{Bounded-Two-Stage}\xspace}
\newcommand{\boundedstatic}{\textsc{Bounded-Static}\xspace}
\newcommand{\boundedfiniteadapt}{\textsc{Bounded-Finite-Adaptability}\xspace}
\newcommand{\boundedKadapt}[1]{\textsc{Bounded-$#1$-Adaptability}\xspace}

\newcommand{\QIP}{\textsc{QIP}\xspace}
\newcommand{\boundedQP}{\textsc{Bounded-QP}\xspace}
\newcommand{\QP}{\textsc{QP}\xspace}
\newcommand{\QMIP}{\textsc{QMIP}\xspace}
\newcommand{\boundedQMIPsingle}{\textsc{Bounded-QMIP-Single}\xspace}
\newcommand{\QMIPsingle}{\textsc{QMIP-Single}\xspace}

\newcommand{\set}[2]{\left\{#1 \; \left|\;\; #2 \right.\right\}}

\newtheorem{question}{Question}
\newtheorem{fact}{Fact}
\newtheorem{lemma}{Lemma}
\newtheorem{theorem}{Theorem}

\newtheorem{observation}{Observation}

\newtheorem{corollary}{Corollary}

\DeclareMathOperator*{\argmin}{arg\,min}

\DeclareMathOperator{\ext}{ext}
\DeclareMathOperator{\ray}{ray}
\DeclareMathOperator{\conv}{conv}

\providecommand{\enc}[1]{\langle #1 \rangle}            
\providecommand{\fc}{\varphi}

\title{The complexity landscape of robust (integer) linear programming}
\author{Michael Poss\footnote{LIRMM, University of Montpellier, CNRS, France, \texttt{michael.poss@lirmm.fr}} \and Jannis Kurtz\footnote{University of Amsterdam, the Netherlands, \texttt{j.kurtz@uva.nl}} \and Marc Goerigk\footnote{Business Decisions and Data Science, University of Passau, Germany, \texttt{marc.goerigk@uni-passau.de}} \and Dorothee Henke\footnote{Business Decisions and Data Science, University of Passau, Germany, \texttt{dorothee.henke@uni-passau.de}}}
\date{}

\begin{document}

\maketitle

\begin{abstract}
We study the computational complexity of the decision versions of three classic robust optimization problems: static robust optimization, two-stage (adjustable) robust optimization, and $K$-adaptability. We consider that the feasibility, uncertainty, and recourse sets are polyhedra or integer-programming-representable sets, the uncertainty is decision-independent or decision-dependent, and the feasible regions are bounded or unbounded.
While these problems are well-established in the current literature, we give
a systematic classification that places the resulting problems in $\P$, $\NP$, $\coNP$, and higher levels of the polynomial hierarchy ($\Sigma_2^p$ and $\Sigma_3^p$), and, once decision-dependent uncertainty introduces quadratic constraints, in the existential theory of the reals and its hierarchy ($\exists\R$, $\Sigma_2\R$, and $\Sigma_3\R$) or among the undecidable problems. Beyond hardness reductions, we pay particular attention to membership proofs, establishing polynomial-size certificates even though the sets considered generally contain vectors of exponential encoding length. As a by-product, we give an alternative proof that bilevel linear optimization lies in $\NP$, exploiting its connection to decision-dependent robust optimization. For $K$-adaptability, we relate the problem to its static and two-stage counterparts, showing that hardness grows monotonically with $K$ and that, for fully discrete decision-dependent instances, $K$-adaptability reduces back to the static problem.\\

\noindent\textbf{Keywords:} robust optimization, complexity, polynomial hiearchy, quadratic optimization
\end{abstract}

\section{Introduction}

We consider in this work the decision versions of three classic robust optimization problems. The first problem we consider is the static robust optimization problem, wherein one wishes to find
a solution $x$ that remains feasible for any realization of an uncertain parameter $u$ in the $L$ uncertain constraints.
Introducing the first-stage feasibility set $\X\subseteq\R^\nxt$, the uncertainty set $\U(x)\subseteq\R^\nut$, which may depend on $x$, and the coupling matrix $H_\ell$ for each $\ell\in[L]=\{1,\ldots,L\}$, the decision problem one wishes to solve is:
\begin{equation}
    \label{eq:staticDEC}\tag{\static}
\exists x\in \X\ \forall u\in \U(x):u^\top H_\ell x\leq 0,\;\forall \ell\in[L]?
\end{equation}
where we assume that the right-hand side of the inequality is 0 without loss of generality (since some components of $x$ and $u$ could be assumed constant). Observe that \static encompasses the decision version of robust optimization with uncertain objective function, in which case a single uncertain expression would represent the objective, as well as more general robust optimization problems without recourse variables.

As a natural extension to \static, we consider the two-stage robust optimization problem wherein the decision maker is able to react to the realization of the uncertain parameters.
We denote the recourse feasibility set, which may depend on $x$ and $u$, by $\Y(x,u)\subseteq\R^\nyt$. The definition of this set follows below. The two-stage robust problem we study is:
\begin{equation}
    \label{eq:2roDEC}\tag{\twostage}
\exists x\in \X\ \forall u\in \U(x)\ \exists y \in\Y(x,u)?
\end{equation}
Observe that $\Y(x,u)$ encompasses the second-stage feasibility as well as the possible objective function of the related optimization problem.

Last, we consider a variant of \twostage that supposes that one must decide on up to $K$ second-stage decisions $y^1,\ldots,y^K$ before the uncertain parameters are revealed, given that one of these $K$ decisions is chosen upon the full knowledge of $u$. Said differently, these $K$ second-stage decisions vectors $y^1,\ldots,y^K$ are chosen to ensure that for each realization of the uncertain vector, at least one of these decision vectors is feasible for set $\Y(x,u)$, leading to the $K$-adaptability decision problem:
\begin{equation}
    \label{eq:k-adapt}\tag{\Kadapt{\k}}
\exists x\in \X, y^1,\ldots ,y^K\in \R^{\nyt}\ \forall u\in \U(x): \{y^1,\ldots,y^\k\} \cap \Y(x,u)\neq\emptyset?
\end{equation}
Observe that in this problem, we assume that $K$ is given by a function of the input, the function itself not being part of the input; this includes constant $K$ as a special case. We furthermore define \finiteadapt as the variant of \Kadapt{\k} where $\k$ is a value which is part of the input.
As for problem \twostage, the possible objective function is implicitly modeled by $\Y(x,u)$.

This paper focuses on two types of constraints for $x,u$, and $y$. On the one hand, we consider the case where these sets are polyhedra in outer representation, in which case we add the index $\bullet_c$ to the sets:
\begin{align*}
\Xc&\equiv\set{x\in\R_+^\nxt}{A x \leq a},\\
\Ucx&\equiv\set{u\in\R_+^\nut}{B(x)u \leq b},\\
\Yc(x,u)&\equiv\set{y\in\R_+^\nyt}{T(u)x + W y \leq 0},
\end{align*}
where $a\in\mathbb{Q}^{\mx}$, $b\in\mathbb{Q}^{\muu}$, and matrices $A$ and $W$ are of conformal dimensions. We further assume that $B:\X\rightarrow\mathbb{Q}^{\muu\times\nut}$ and $T:\U\rightarrow\mathbb{Q}^{\my\times\nxt}$ are linear in $x$ and $u$, respectively. Observe that we can account for affine dependencies, instead of linear ones, by introducing auxiliary variables $x_{\nxt}=1$ and $u_{\nut}=1$. Similarly, it is well-known that the independence of $b$ on $x$
is made without loss of generality. Last, we also assume without loss of generality that the right-hand side of the constraints describing $\Yc(x,u)$ is equal to 0.

On the other hand, we consider the discrete counterparts of the above sets, denoted by the index $\bullet_d$, and formally defined as $\Xd=\Xc\cap\Z^\nxt$, $\Udx=\Ucx\cap\Z^\nut$, and $\Yd=\Yc\cap\Z^\nyt$.  If $B$ is a constant, we write $\Uc$ and $\Ud$ instead, respectively.

Instances for the above problems are obtained by describing the specific sets $\X$, $\U$, and $\Y$, and their possible dependencies on $x$ and $u$, with the necessary vectors and matrices with rational entries. Our study considers different variants of the problems, depending on whether $\X,\U,\Y$ are continuous or discrete, and whether or not $\U$ depends on $x$. When we wish to focus on the problem considering only a subset of instances satisfying the above restrictions, we use the notation $\static(\X,\U)$, $\twostage(\X,\U,\Y)$, $\finiteadapt(\X,\U,\Y)$, and $\Kadapt{K}(\X,\U,\Y)$ for the cases $\X\in\{\Xc,\Xd\}$, $\U\in\{\Uc,\Ud,\Ucx,\Udx\}$, and $\Y\in\{\Yc,\Yd\}$.

The above problems do not assume that the underlying sets are bounded. As we will see throughout the paper, this leads to undecidability results for several variants for which the uncertainty set is decision-dependent. While studying the complexity of the unbounded case is of theoretical interest, practical problems typically consider bounded sets, so we additionally define the bounded versions of the above problems. These versions assume that each of the considered set, $\X$, $\U$, and $\Y$, is included into a $\ell_\infty$-ball of radius $R$, which is part of the problem input. For instance, for $\boundedstatic$, $\Xc\equiv\set{x\in[0,R]^\nxt}{A x \leq a}$, and we extend similarly the definitions of $\U$ and $\Y$, leading to the bounded problems $\boundedtwostage$, $\boundedfiniteadapt$ and $\boundedKadapt{K}$. Observe that for each of these problems, the bounded variant is a special case of the original problem. Therefore, the hardness of the bounded variant implies the hardness of the unbounded problem, while proving the membership of the unbounded problem in some complexity class implies the membership of the bounded variant in the same complexity class.

Many authors have studied the complexity of robust optimization problems, starting with \static for $\X=\Xd$ and $\U$ being described as a list of scenarios, see for instance~\cite{kouvelis2013robust,robook}. Robust optimization with convex uncertainty sets has emerged in parallel to this line of research on discrete scenarios~\cite{Ben-TalN98}. The results from~\cite{Ben-TalN98,kouvelis2013robust} immediately yield the polynomiality of $\static(\Xc,\Uc)$ and the $\NP$-completeness of $\static(\Xd,\Uc)$, though special polyhedra are known to lead to problems where \static is not harder than the nominal counterparts~\cite{bertsimas2003robust,OJMO_2024__5__A5_0}. Subsequent authors have considered the complexity of \twostage, again mostly for $\X=\Xd$ and $\Y=\Yd$~\cite{robook}.
Such problems with alternating quantifiers are a special case of quantified integer programming, and thus known to be $\Sigma_3^p$-hard in general, while special cases such as constant number of variables may reduce their complexity~\cite{nguyen2020computational}. Possibly one of the most striking result in this line of research lies in the meta-theorems proposed by~\cite{GruneW25,GruneW25b}, which transfer $\NP$-completeness from the nominal problem to $\Sigma_3^p$-hardness of the \twostage variant, under some technical assumptions. Yet \twostage is known to be difficult, even when there are no discrete variables at all. Specifically,~\cite{guslitser2002uncertainty} show that \twostage is $\NP$-hard even when $\U=\set{u\in[-1,1]}{\sum_i a_i u_i =0}$, using a reduction from the partition problem. Meanwhile,~\cite{chekuri2007hardness} shows that the separation problem is $\coNP$-hard in the case where $\U$ is the Hose-model. Initially introduced as a practical and hopefully more efficient way to handle discrete second-stage variables~\cite{bertsimas2010finite,HanasusantoKW16}, \Kadapt{K} has since then been the topic of diverse theoretical studies, in particular, regarding approximation guarantees one can expect by considering small values of $\k$~\cite{Kurtz24} as well as studying which such value one should consider to reach optimality~\cite{kurtz2026bounding}. In particular, in their seminal paper,~\cite{bertsimas2010finite} prove that \Kadapt{2} is $\NP$-hard. In \cite{HanasusantoKW16} the authors show that evaluating the objective function of the binary robust $K$-adaptability problem with uncertainty in the constraints can be done in polynomial time if $\k$ is fixed, but otherwise is strongly $\NP$-hard. A related problem called \textit{Min-max-min Robust Optimization} \cite{buchheim2017min}, where no first-stage decisions are considered, was studied in terms of complexity mostly for the case of objective uncertainty. In \cite{buchheim2017min} the authors reduce the $\k$-adaptability problem to the $(\k+1)$-adaptability problem for binary problems with polyhedral uncertainty. Additionally, they show that if the underlying problem is polynomially solvable, then the min-max-min robust problem is polynomially solvable if $\k$ is at least the dimension of the problem. Further complexity results were derived for several problems with discrete uncertainty sets \cite{buchheim2018complexity,goerigk2020min}, for $\k=2$ \cite{chassein2021complexity} and for a treatment planning problem in radiotherapy \cite{qiu2025k}. The above references all consider decision-independent uncertainty sets. However, richer robust models can account for decision-dependence motivated, for instance, by probabilistic guarantees~\cite{Poss13} or applications in which the uncertainty can be reduced by spending resources to acquire
information~\cite{arslan2024uncertainty,nohadani2018optimization}. Decision-dependence is also central to the more recent model of decision-dependent information discovery, introduced by~\cite{vayanos2026robust} to allow decision makers to query the most crucial sources of uncertainty. Last, it allows to establish a relationship between robust optimization and bilevel optimization~\cite{OJMO_2025__6__A2_0}.

Despite the rich history of complexity studies in robust optimization, some of which being summarized above, no work so far has carried out a systematic study of the complexity of \static, \twostage, \finiteadapt and \Kadapt{\k} driven by classifying the sets $\X$, $\U$, and $\Y$ as we do herein: polytopes, polyhedra, and integer-programming-representable sets, bounded or not. As a consequence, our results shed new light on the complexity of robust integer programming. These results are of two types. On the one hand, we provide many-one reductions, to prove hardness of our problems for a given complexity class. On the other hand, and perhaps less common in the mathematical optimization literature, we spend a significant effort into proving membership in these complexity classes. Indeed, all sets considered herein, apart from bounded integer-programming-representable ones, contain vectors that have no polynomial encoding length, so we must prove the existence of small-size certificates. While the membership can be obtained quite naturally in some settings by using folklore results of integer programming, other results require more insights, such as \twostage with discrete second-stage or decision-dependent problems, and we have not been able to prove membership for \twostage with discrete (unbounded) second-stage variables. Our results, detailed in Tables~\ref{tab:static},\ref{tab:twostage} and \ref{tab:k-adapt}, and further illustrated in Figure~\ref{fig:complexity}, end up classifying many variants of problems \static, \twostage, and \Kadapt{\k} not only within the polynomial hierarchy, but also within the theory of the reals and the set of undecidable problems. The latter connections naturally appear when considering decision-dependent uncertainty sets, which in turn lead to quadratic constraints, known to lead to decision problems that are either complete for the existential theory of the reals, or undecidable, depending on the presence of integer variables or not. These results thus provide theoretical justification for the hardness of decision-dependent robust optimization observed in practice. Additionally, we provide an alternative proof for the $\NP$ membership of bilevel linear programming~\cite{Buchheim23}, leveraging its connection to decision-dependent robust optimization already illustrated in~\cite{OJMO_2025__6__A2_0}. For $\Kadapt{\k}$, rather than investigating all cases as for \static and \twostage, we try to relate the problem to \static and \twostage through many-one reductions. The main results that we are able to prove in this area are summarized below. First, we can provide a reduction from $\boundedKadapt{1}(\X,\U,\Y)=\boundedstatic(\X\times\Y,\U)$ to $\boundedKadapt{K}(\X,\U,\Y)$, showing that hardness grows monotonically with $K$.
\begin{theorem}\label{thm:k_k+1_reduction}
Let $\X\in \{ \X_c,\X_d\}$, $\U\in \{\U_c, \U_d\}$ and $\Y\in \{\Y_c,\Y_d\}$. If $\k$ is polynomial in the input, then $\boundedKadapt{1}(\X,\U,\Y) \le \boundedKadapt{K}(\X,\U,\Y).$ 
\end{theorem}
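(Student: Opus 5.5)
The plan is a polynomial many-one reduction. It appends to the uncertainty a ``selector'' vector ranging over the standard simplex of dimension $K$. Each of the $K$ vertices of the simplex can only be served by a policy carrying a matching label, and each vertex is paired with every $u\in\U$. Every one of the $K$ second-stage policies must then be feasible for all of $\U$, which is exactly \boundedKadapt{1}. Recall that \boundedKadapt{1}$(\X,\U,\Y)$ asks for $x\in\X$ and one $y$ with $y\in\Y(x,u)$ for all $u\in\U$.

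\textbf{Construction.} Take an instance $(\X,\U,\Y,R)$. Keep $\X$ unchanged. As noted in the introduction, we may assume $x$ contains a component fixed to $1$, so affine terms in $x$ are available. Define
\[
\U'=\set{(u,w)\in \R^{\nut}_+\times\R^{K}_+}{u\in\U,\ \textstyle\sum_{j} w_j\le 1},
\]
a product of polyhedra. In the discrete case we intersect it with integers; the integer points in the $w$-part are then $0,e_1,\dots,e_K$. Extend the recourse vector to $(y,z)$ with $z\in\R^K_+$, and let $\Y'(x,u,w)$ consist of the original constraints $T(u)x+Wy\le 0$ together with
\[
\sum_j z_j\le x_{\mathrm{const}},\qquad K w_j x_{\mathrm{const}}-(K-1)x_{\mathrm{const}}-z_j\le 0\quad (j\in[K]).
\]
These are linear in $y,z$, and their $x$-coefficients are affine in $(u,w)$, so the instance has the required form. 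Its size is polynomial because $K$ is polynomial in the input, and the radius $\max(R,1)$ keeps it bounded. All variables keep their type: continuous variables stay continuous and integer variables stay integer.

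\textbf{Correctness.} Suppose first that $(x,y)$ is a yes-certificate for \boundedKadapt{1}. Then choose $y^k=(y,e_k)$ for $k\in[K]$. For $w$ in the simplex, at most one coordinate satisfies $w_j>(K-1)/K$, since otherwise $\sum_j w_j>1$ for $K\ge2$. If no coordinate does, every selector constraint has nonpositive left-hand side minus $z_j\le0$ and any policy works. If $w_j>(K-1)/K$, the policy $y^j$ works because $Kw_j-(K-1)\le1=z_j$. The original constraints hold for every $u$ by robust feasibility of $y$.

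Conversely, suppose $x,(y^1,z^1),\dots,(y^K,z^K)$ is a yes-certificate for the new instance. For each vertex $e_j$ some policy must satisfy $z_j\ge1$. Together with $\sum z\le1$, this forces $z=e_j$, so distinct vertices are served by distinct policies. Hence there is a bijection $\pi$ with $z^{\pi(j)}=e_j$, and policy $\pi(j)$ is the only one able to serve $(u,e_j)$, for every $u\in\U$. In particular $y^{\pi(1)}$ satisfies $T(u)x+Wy^{\pi(1)}\le0$ for all $u\in\U$, so $(x,y^{\pi(1)})$ certifies the original instance.

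\textbf{Main obstacle.} The delicate step is the interior of the continuous simplex. A naive coupling such as $z\ge w$ would force a new policy for every $w$. The affine threshold $Kw_j-(K-1)$ is chosen so that only points very close to a vertex impose anything. I would double-check two things here: that at most one coordinate exceeds the threshold for every $K\ge2$ (the case $K=1$ is trivial), and that the discrete variants need no separate argument, since there the integer points of the simplex are exactly its vertices and $0$.
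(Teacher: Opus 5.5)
Your reduction is correct, and it takes a different route from the paper.

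\textbf{The paper's route.} The paper proves the stronger stepwise statement $\boundedKadapt{K}\le\boundedKadapt{(K+1)}$ for every $K$ (Lemma~\ref{lem:k_to_k+1}) and then chains $K-1$ such reductions. Each step appends a single uncertain coordinate $u_{n_u+1}\in[0,1]$ (or $\{0,1\}$) and a single recourse label $y_{n_y+1}$ with $2u_{n_u+1}-1\le y_{n_y+1}\le 2u_{n_u+1}$. It relaxes the original rows to $T(u)x+Wy-My_{n_y+1}\one^{\my}\le 0$. The one extra policy $(\zero,1)$ is freed from the original constraints by the big-$M$ term and covers the scenarios with $u_{n_u+1}\ge\frac12$. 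The upper bound $y_{n_y+1}\le 2u_{n_u+1}$ forces every scenario $(u,0)$ onto the $K$ old policies.

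\textbf{Your route.} You go from $1$ to $K$ in one shot. Your threshold $Kw_j-(K-1)\le z_j$ is the $K$-ary analogue of the paper's lower bound $y_{n_y+1}\ge 2u_{n_u+1}-1$. However, you keep the original constraints active for every policy. You replace the big-$M$ term and the upper bound by the budget $\sum_j z_j\le 1$, which turns the backward direction into your pigeonhole argument over the $K$ vertices.

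\textbf{What each buys.} The paper's route gives monotonicity of hardness in $K$ at every level, not only relative to $K=1$. Yours is a single construction with no chaining and no big-$M$. It therefore never needs an a priori bound on $T(u)x$, and it goes through verbatim for the unbounded problems and for decision-dependent sets $\U(x)$. An empty uncertainty set makes both instances trivially equivalent, which your backward argument implicitly sets aside.

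\textbf{Two cosmetic points.} First, writing $-(K-1)x_{\mathrm{const}}$ and $\sum_j z_j\le x_{\mathrm{const}}$ in the form $T(u)x+Wy\le 0$ with $T$ linear in $u$ also uses the constant coordinate of $u$, not just of $x$. Second, if $R<1$, enlarging the radius to $\max(R,1)$ requires keeping the original bounds $\le R$ as explicit linear constraints.
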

Additionally, we can prove that under slightly more restrictive settings $\boundedKadapt{K}$ can be reduced to $\boundedtwostage$, placing $\boundedKadapt{K}$ between $\boundedstatic$ and $\boundedtwostage$ in terms of hardness. Additional membership proofs show that in some cases $\boundedKadapt{K}$ can be located on a higher level in the polynomial hierarchy than $\boundedstatic$.

Perhaps more surprisingly is the fact that decision-dependence allows in some case to prove the reverse reduction.
\begin{theorem}
If $\k$ is polynomial in the input, then $\boundedKadapt{\k}(\Xd,\Udx,\Yd) \le\newline \boundedKadapt{1}(\Xd,\Udx,\Yd).$
\end{theorem}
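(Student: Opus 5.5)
The plan is to remove the disjunction ``some $y^k$ is feasible'' by moving the $K$ second-stage vectors into the first stage. The adversary is then asked to exhibit a certificate of simultaneous infeasibility of all $K$ of them. Decision-dependence lets this certificate depend on the first-stage variables, and integrality lets it be written with linear constraints. Given an instance of $\boundedKadapt{\k}(\Xd,\Udx,\Yd)$ with data $A,a,B(\cdot),b,T(\cdot),W,R$, I first scale all rational data to integers. Then for integral $x,u,y$ every entry of $T(u)x+Wy$ is an integer. Hence $y\notin\Yd(x,u)$ holds if and only if some row $i\in[\my]$ satisfies $(T(u)x+Wy)_i\ge 1$. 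Because all variables lie in $[0,R]$, I can compute in polynomial time a bound $M$ (polynomial encoding length) on $|(T(u)x+Wy)_i|$.

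The new instance of $\boundedKadapt{1}(\Xd,\Udx,\Yd)$ is built as follows.
\begin{itemize}
\item The first-stage vector is $x'=(x,y^1,\dots,y^\k,x_0)$ with $x'\in\Xd\times\Yd^{\,\k}$-box and $x_0=1$. Here $y^k\in[0,R]^\nyt\cap\Z^\nyt$, which is exactly what the bounded discrete $\k$-adaptability problem ranges over.
\item The uncertain vector is $u'=(u,u_0,(z^k_i)_{k\in[\k],i\in[\my]})$, with $u_0=1$ and $z^k_i\in\{0,1\}$.
\item The set $\U'(x')$ consists of the $u'$ satisfying
\[
B(x)u\le b,\qquad \sum_{i} z^k_i = 1\ \ \forall k,\qquad (T(u)x)_i + (Wy^k)_i\,u_0 \ge 1 - M(1-z^k_i)\ \ \forall k,i .
\]
\end{itemize}
The point to verify is that this set has the allowed form $B'(x')u'\le b'$ with $B'$ linear in $x'$. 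The term $(T(u)x)_i$ is linear in $x$ for fixed $u$, since $T$ is linear in $u$. The term $(Wy^k)_i u_0$ is a product of a first-stage variable and an uncertain coordinate. The big-$M$ term is linear in $u'$ with constant coefficients. Fixing $u_0=1$ only needs the constant first-stage component $x_0$, or constant right-hand sides.

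For the single robust constraint I take $u_0x_0\le 0$, i.e.\ $u'^\top H x'\le 0$ for a suitable $H$, with a trivial recourse set $\Y'$ (say $\nyt'=0$, or one variable fixed to $0$). Since $u_0=x_0=1$, this constraint is violated by every $u'$. The new instance is therefore a yes-instance if and only if some feasible $x'$ makes $\U'(x')$ empty. By construction, $\U'(x')\neq\emptyset$ holds if and only if there is $u\in\Udx$ such that every $y^k$ has a violated row $i_k$, witnessed by $z^k_{i_k}=1$. The big-$M$ inactivates all other rows. So $\U'(x')=\emptyset$ is exactly the condition that $\{y^1,\dots,y^\k\}\cap\Yd(x,u)\ne\emptyset$ for all $u\in\Udx$, and the two instances are equivalent. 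The radius of the new instance is $\max(R,1)$, and the size is polynomial because $\k$ is polynomial in the input. That is precisely where the hypothesis on $\k$ is used.

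The main obstacle is the strictness issue in encoding $y^k\notin\Yd(x,u)$. With continuous data, ``some row is violated'' is a strict inequality, which cannot be written in a closed polyhedral set. I handle it by scaling to integer data and using integrality of $x,u,y^k$ to replace $>0$ by $\ge 1$. This is also why the statement is restricted to the fully discrete case. The second step needing care is that $M$ and the scaling factors have polynomial encoding length; I would obtain this from the $\ell_\infty$-bound $R$ and standard bounds on the entries of $T(\cdot)$ and $W$.
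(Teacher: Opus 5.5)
Your reduction is correct, but it is organized differently from the paper's. The paper proves a one-step lemma, $\boundedKadapt{(K+1)}(\Xd,\Udx,\Yd)\le\boundedKadapt{K}(\Xd,\Udx,\Yd)$, and then iterates it:
\begin{itemize}
\item only the last policy $y^{K+1}$ is moved into the first stage, as $\hat x$;
\item the adversary carries indicators $q'$ of the violated rows of $\hat x$ and an OR-indicator $z'$;
\item the remaining $K$-policy recourse is relaxed by $M(1-z')$, so it only has to cover scenarios where $\hat x$ fails.
\end{itemize}
You instead move all $K$ policies into the first stage at once. The adversary exhibits one violated row per policy through the selectors $z^k_i$ with a one-sided big-$M$, and the recourse is always infeasible. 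Hence the $1$-adaptability instance is YES exactly when $\U'(x')=\emptyset$.

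The ingredients are the same in both routes: integer scaling so that infeasibility means some row is $\ge 1$, binary indicators with big-$M$, and decision-dependence to place an infeasibility witness inside the uncertainty set. Your single polynomial-size construction has two advantages. It spares you from tracking how dimensions and big-$M$ constants accumulate over $K-1$ chained reductions, which the paper leaves implicit. It also shows that the target can be taken to be the pure emptiness question $\exists x'\in\X':\U'(x')=\emptyset$.

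The paper's route in exchange yields the stepwise $K+1\to K$ statement. Together with the forward lemma, this relates every pair of consecutive values of $K$. It also keeps a nontrivial recourse in all intermediate instances rather than relying on vacuous truth over an empty uncertainty set. The paper's definitions allow empty uncertainty sets, so this is not a flaw in your argument.

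One thing to tidy up: $1$-adaptability has no separate constraint $u'^\top Hx'\le 0$. Put $u_0x_0\le 0$ into $\Y'(x',u')$ as $T'(u')x'+W'y'\le 0$ with $T'(u')x'=u_0x_0$ and $W'=0$. This has the required form since $T'$ is linear in $u'$.
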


The rest of the paper is structured as follows. In the next section, we introduce additional notation, recall the basic concepts and definitions about the complexity classes used throughout, and provide folklore results about the membership of solutions to mixed-integer linear and quadratic optimization problems. Sections~\ref{sec:static},~\ref{sec:twostage}, and~\ref{sec:kadapt} dive into the results summarized in Tables~\ref{tab:static},\ref{tab:twostage} and \ref{tab:k-adapt} for \static, \twostage, and \Kadapt{K}, respectively.

\begin{figure}
    \centering
    \scalebox{0.8}{
\begin{tikzpicture}

\definecolor{cUndec}{RGB}{190,190,190}
\definecolor{cS3p}  {RGB}{255,185,105}
\definecolor{cS2p}  {RGB}{255,252,130}
\definecolor{cCoNP} {RGB}{135,205,255}
\definecolor{cNP}   {RGB}{135,240,165}
\definecolor{cP}    {RGB}{230,255,235}
\definecolor{oS3p}  {RGB}{200,110, 20}
\definecolor{oS2p}  {RGB}{180,160,  0}
\definecolor{oCoNP} {RGB}{ 20,120,200}
\definecolor{oNP}   {RGB}{ 20,160, 60}
\definecolor{oUndec}{RGB}{100,100,100}
\definecolor{oExR}  {RGB}{130, 30,200}
\definecolor{oS2R}  {RGB}{210, 20,130}
\definecolor{oS3R}  {RGB}{210, 30, 30}

%
%

\fill[cUndec, rounded corners=8pt] (16.6,6.85) rectangle (20.8,11.1);
\fill[cS3p,   rounded corners=8pt] (0.2, 2.7)  rectangle (9.8, 10.8);
\fill[cS2p,   rounded corners=7pt] (1.0, 3.0)  rectangle (8.8,  8.6);
\fill[cCoNP,  rounded corners=6pt] (1.6, 3.3)  rectangle (5.8,  6.0);
\fill[cNP,    rounded corners=6pt] (4.0, 3.3)  rectangle (8.2,  6.0);
\fill[cP,     rounded corners=4pt] (4.2, 3.6)  rectangle (5.6,  5.0);

\draw[oUndec, line width=1.0pt, rounded corners=8pt]
     (16.6,6.85) rectangle (20.8,11.1);
\draw[oS3p,   line width=1.4pt, rounded corners=8pt]
     (0.2, 2.7)  rectangle (9.8, 10.8);
\draw[oS2p,   line width=1.0pt, rounded corners=7pt]
     (1.0, 3.0)  rectangle (8.8,  8.6);
\draw[oCoNP,  line width=1.0pt, rounded corners=6pt]
     (1.6, 3.3)  rectangle (5.8,  6.0);
\draw[oNP,    line width=1.0pt, rounded corners=6pt]
     (4.0, 3.3)  rectangle (8.2,  6.0);
\draw[black,  line width=0.7pt, rounded corners=4pt]
     (4.2, 3.6)  rectangle (5.6,  5.0);

\draw[oS3R, dash dot dot, line width=1.1pt, rounded corners=7pt]
     (-0.1, 2.55) rectangle (16.0,11.1);
\draw[oS2R, dash dot,     line width=1.1pt, rounded corners=7pt]
     (0.7,  2.85) rectangle (14.5,8.9);
\draw[oExR, dashed,       line width=0.8pt, rounded corners=6pt]
     (3.5,  3.15) rectangle (12.5, 6.3);

\node[font=\normalsize\bfseries, black]                   at (4.9, 4.6)  {$\mathsf{P}$};
\node[font=\normalsize\bfseries, oNP,  anchor=north east] at (8.1, 5.9)  {$\mathsf{NP}$};
\node[font=\normalsize\bfseries, oCoNP,anchor=north west] at (1.7, 5.9)  {$\mathsf{coNP}$};
\node[font=\normalsize\bfseries, oS2p, anchor=north east] at (8.7, 8.5)  {$\Sigma_2^p$};
\node[font=\normalsize\bfseries, oS3p, anchor=north east] at (9.7,10.7)  {$\Sigma_3^p$};
\node[font=\normalsize\bfseries, oExR]                    at (11.5, 5.95) {$\exists\mathbb{R}$};
\node[font=\normalsize\bfseries, oS2R] at (13.7, 4.5) {$\Sigma_2\mathbb{R}$};
\node[font=\normalsize\bfseries, oS3R] at (15.2, 6.0) {$\Sigma_3\mathbb{R}$};
\node[font=\normalsize\bfseries, oUndec, anchor=north]    at (18.7,11.0){Undecidable};


\node[font=\scriptsize] at (4.9,3.95) {$\mathrm{S}(\Xc,\Uc)$};

\node[font=\scriptsize, align=center] at (7.0,4.6) {
  $\mathrm{S}(\Xd,\Uc)$\\[2pt]$\mathrm{S}^b(\Xd,\Ucx)$
};

\node[font=\scriptsize, align=center] at (3.0,4.6) {
  $\mathrm{S}(\Xc,\Ud)$\\[2pt]
  $\mathrm{T}(\Xc,\Uc,\Yc)$\\[2pt]
  $\mathrm{T}(\Xc,\Ud,\Yc)$
};

\node[font=\scriptsize] at (11.15,4.5) {$\mathrm{S}(\Xc,\Ucx)$};

\node[font=\scriptsize, align=center] at (5.0,7.3) {
  $\mathrm{S}(\Xd,\Ud)$\quad $\mathrm{S}^b(\Xc,\Udx)$\quad $\mathrm{S}^b(\Xd,\Udx)$\\[3pt]
  $\mathrm{T}(\Xd,\Uc,\Yc)$\quad $\mathrm{T}(\Xd,\Ud,\Yc)$\\[3pt]
  $\mathrm{T}^b(\Xc,\Udx,\Yc)$\quad $\mathrm{T}^b(\Xd,\Ucx,\Yc)$\quad $\mathrm{T}^b(\Xd,\Udx,\Yc)$
};

\node[font=\scriptsize, align=center] at (5.0,9.85) {
  $\mathrm{T}^b(\Xd,\Uc,\Yd)$\quad $\mathrm{T}^b(\Xc,\Ud,\Yd)$\quad $\mathrm{T}^b(\Xd,\Ud,\Yd)$\\[3pt]
  $\mathrm{T}^b(\Xd,\Ucx,\Yd)$\quad $\mathrm{T}^b(\Xc,\Udx,\Yd)$\quad $\mathrm{T}^b(\Xd,\Udx,\Yd)$
};

\node[font=\scriptsize] at (12.0,8.2) {$[\mathrm{T}(\Xc,\Ucx,\Yc)]$};

\node[font=\scriptsize] at (12.0,9.85) {$[\mathrm{T}^b(\Xc,\Ucx,\Yd)]$};

\node[font=\scriptsize, align=center] at (18.7,8.725) {
  $\mathrm{S}(\Xd,\Ucx)$\\[3pt]
  $\mathrm{S}(\Xd,\Udx)$\\[3pt]
  $\mathrm{T}(\Xd,\Ucx,\Yc)$\\[3pt]
  $\mathrm{T}(\Xd,\Udx,\Yc)$\\[3pt]
  $\mathrm{T}(\Xc,\Ucx,\Yd)$\\[3pt]
  $\mathrm{T}(\Xd,\Ucx,\Yd)$\\[3pt]
  $\mathrm{T}(\Xc,\Udx,\Yd)$\\[3pt]
  $\mathrm{T}(\Xd,\Udx,\Yd)$
};

\node[font=\scriptsize, anchor=north west, align=left, text width=20.9cm]
     at (-0.1,2.15) {%
  $\mathrm{S}$ = \textsc{Static} (Tab.\,1),\enspace
  $\mathrm{T}$ = \textsc{Two-Stage} (Tab.\,2),\enspace
  ${}^b$ = bounded only (unbounded: undecidable or membership open),\enspace
  $[\cdot]$ = class membership proven, completeness unknown%
};

\end{tikzpicture}
    }
\caption{Venn diagram illustrating the results from Tables~\ref{tab:static} and~\ref{tab:twostage}.
\label{fig:complexity}}

\end{figure}

\begin{table}[h!]\renewcommand{\arraystretch}{1.5}
    \centering
    \begin{tabular}{c|c|c|c}
    & & $\Xc$ & $\Xd$ \\
    \hline
    \multirow{2}{*}{$\Uc$} & \boundedstatic & $\P$ & $\NP$-C \\
    \cline{2-4}
    & \static & $\P$ & $\NP$-C \\
    \hline
    \multirow{2}{*}{$\Ud$} & \boundedstatic &  $\coNP$-C,~Thm.~\ref{thm:static_coNPC} &
    \makecell{$\Sigma_2^p$-C,~\cite{claus2020note},~Thm.~\ref{thm:XcXdSigma}} \\
    \cline{2-4}
    & \static & $\coNP$-C,~Thm.~\ref{thm:static_coNPC} &
    \makecell{$\Sigma_2^p$-C,~\cite{claus2020note},~Thm.~\ref{thm:XcXdSigma}} \\
    \hline
    \multirow{2}{*}{$\Ucx$} & \boundedstatic & $\exists\mathbb{R}$-C, Cor.~\ref{cor:XcUcx} & $\NP$-C, Thm.~\ref{thm:boundedXdUcx} \\
    \cline{2-4}
    & \static & $\exists\mathbb{R}$-C, Cor.~\ref{cor:XcUcx} &  undecidable, Cor.~\ref{cor:XdUcx} \\
    \hline
    \multirow{2}{*}{$\Udx$} & \boundedstatic & $\Sigma_2^p$-C, Thm.~\ref{thm:XcUdx} & $\Sigma_2^p$-C, Obs.~\ref{obs:boundedSigma2p} \\
    \cline{2-4}
    & \static & {$\bm\Sigma_2^p$-\bf H}, Thm.~\ref{thm:XcUdx} &undecidable, Cor.~\ref{cor:XUdx} \\
    \end{tabular}
    \caption{Complexity of \boundedstatic and \static. Hardness only in bold.\label{tab:static}}
\end{table}

\begin{table}[h!]\renewcommand{\arraystretch}{1.5}
\centering

\begin{tabular}{cc|c|c|c}
 & & & $\Xc$ & $\Xd$ \\
\hline
\multirow{8}{*}{$\Yc$} &
\multirow{2}{*}{$\Uc$}
& \boundedtwostage & $\coNP$-C, Thm.~\ref{thm:2stagescoNPC} & $\Sigma_2^p$-C,~\cite{lefebvre2025exact}, Thm.~\ref{thm:inSigma2p} \\
\cline{3-5}
& & \twostage & $\coNP$-C, Thm.~\ref{thm:2stagescoNPC} & $\Sigma_2^p$-C,~\cite{lefebvre2025exact}, Thm.~\ref{thm:inSigma2p} \\
\cline{2-5}
& \multirow{2}{*}{$\Ud$}
& \boundedtwostage & $\coNP$-C, Thm.~\ref{thm:2stagescoNPC} & $\Sigma_2^p$-C, Obs.~\ref{obs:XdUd2stages}, Thm.~\ref{thm:inSigma2p} \\
\cline{3-5}
& & \twostage & $\coNP$-C, Thm.~\ref{thm:2stagescoNPC} & $\Sigma_2^p$-C, Obs.~\ref{obs:XdUd2stages}, Thm.~\ref{thm:inSigma2p} \\
\cline{2-5}
& \multirow{2}{*}{$\Ucx$}
& \boundedtwostage & \makecell{{$\bm\Sigma_2^p$-\bf H}, Thm.~\ref{thm:XcUcxYcH}, $\mathit{\Sigma_2\R}$}, Obs.~\ref{obs:XcUcxYc}
& \makecell{ $\Sigma_2^p$-C, \cite{lefebvre2025exact}, Obs.~\ref{obs:XdUxYc}} \\
\cline{3-5}
& & \twostage & \makecell{{$\bm\Sigma_2^p$-\bf H}, Thm.~\ref{thm:XcUcxYcH}, $\mathit{\Sigma_2\R}$}, Obs.~\ref{obs:XcUcxYc}
& \makecell{undecidable, Obs.~\ref{obs:undecidableeasy}} \\
\cline{2-5}
& \multirow{2}{*}{$\Udx$}
& \boundedtwostage & \makecell{ $\Sigma_2^p$-C, Thm.~\ref{thm:XcUdx}, \ref{thm:XcUdYcDD}}
& \makecell{ $\Sigma_2^p$-C, Obs.~\ref{obs:XdUd2stages}, \ref{obs:XdUxYc}} \\
\cline{3-5}
& & \twostage & \makecell{{$\bm\Sigma_2^p$-\bf H}, Thm.~\ref{thm:XcUdx}}
& \makecell{undecidable, Obs.~\ref{obs:undecidableeasy}} \\
\hline
\multirow{8}{*}{$\Yd$}
& \multirow{2}{*}{$\Uc$}
& \boundedtwostage & \makecell{{$\bm\Sigma_3^p$-\bf H}, Obs.~\ref{obs:calld_hard} }
& \makecell{$\Sigma_3^p$-C, Thm.~\ref{thm:XdUcYdinSigma}, \cite{GoerigkLW24}}  \\
\cline{3-5}
& & \twostage & \makecell{{$\bm\Sigma_3^p$-\bf H}, Obs.~\ref{obs:calld_hard} }
& \makecell{{$\bm\Sigma_3^p$-\bf H}, \cite{GoerigkLW24}}  \\
\cline{2-5}
& \multirow{2}{*}{$\Ud$}
& \boundedtwostage & \makecell{$\Sigma_3^p$-C, Obs.~\ref{obs:calld_hard}, Thm.~\ref{thm:XcUdYdinSigma} }
& \makecell{$\Sigma_3^p$-C Obs.~\ref{obs:calld_hard}, \ref{obs:XdUdYdinSigma}} \\
\cline{3-5}
& & \twostage & \makecell{{$\bm\Sigma_3^p$-\bf H}, Obs.~\ref{obs:calld_hard}  }
& \makecell{{$\bm\Sigma_3^p$-\bf H}, \cite{GoerigkLW24} } \\
\cline{2-5}
& \multirow{2}{*}{$\Ucx$}
& \boundedtwostage & \makecell{ ${\bm\exists\R}$-\bf H, {$\bm\Sigma_3^p$-\bf H}, $\mathit{\Sigma_3}\R$}
& \makecell{ $\Sigma_3^p$-C, Thm.~\ref{thm:XdUcYdinSigma}, \cite{GoerigkLW24}} \\
\cline{3-5}
& & \twostage & \makecell{undecidable, Obs.~\ref{obs:undecidableeasy}}
& \makecell{undecidable, Obs.~\ref{obs:undecidableeasy}} \\
\cline{2-5}
& \multirow{2}{*}{$\Udx$}
& \boundedtwostage & \makecell{ $\Sigma_3^p$-C, Obs.~\ref{obs:calld_hard}, Thm.~\ref{thm:XcUdYdxinSigma} }
& \makecell{ $\Sigma_3^p$-C, Obs.~\ref{obs:calld_hard}, \ref{obs:XdUdYdinSigma}} \\
\cline{3-5}
& & \twostage& \makecell{undecidable, Obs.~\ref{obs:undecidableeasy}}
& \makecell{undecidable, Obs.~\ref{obs:undecidableeasy}} \\
\end{tabular}
\caption{Complexity of \boundedtwostage and \twostage. Hardness only in bold, membership only in italic.\label{tab:twostage}}
\end{table}

\section{Preliminaries}
\label{sec:preliminaries}

\paragraph{Additional notation}
Let $\zero^k$ and $\one^k$ be the vectors of, respectively, all zeros and ones,
of dimension $k$. We use $[n]=\{1,\ldots,n\}$ to denote index sets, and write $[n]_0=\{0,1,\ldots,n\}$ if 0 is included as well.
For a rational vector or matrix $v$, we denote by $\enc{v}$ its \emph{encoding length}, i.e. the number of bits needed to encode it in binary; for a list of
objects we write $\enc{\cdot,\ldots,\cdot}$ for the total encoding length. The
\emph{facet complexity} of a rational polyhedron in $\R^n$ is the smallest $\fc\geq n$
such that the polyhedron can be described by linear inequalities each of
encoding length at most $\fc$. Throughout, ``polynomial encoding length''
abbreviates ``encoding length polynomially bounded in the encoding length of
the instance''. For a polyhedron $P\subseteq\R^n$, we let $\ext(P)$ and $\ray(P)$ denote the
sets of its extreme points and extreme rays, respectively.

\paragraph{Polynomial hierarchy}
Several of the decision problems studied in this paper naturally involve
alternating existential and universal quantifiers. Their complexity is
therefore best described using the polynomial hierarchy ($\PH$), which
generalizes the classes $\NP$ and $\coNP$. Recall that $\NP$ contains all
decision problems for which YES-instances admit certificates of polynomial
encoding length that can be verified in polynomial time, while $\coNP$ contains
the complements of problems in $\NP$. More generally, the polynomial hierarchy
is obtained by allowing polynomial-time verification procedures with
alternating existential and universal quantifiers over polynomial-size
certificates. Its levels are denoted by $\Sigma_k^p$ and $\Pi_k^p$, depending on
whether the first quantifier is existential or universal.

In this work, we will mainly encounter the classes $\Sigma_2^p$ and
$\Sigma_3^p$. A problem belongs to $\Sigma_2^p$ if it can be expressed in the
form
\[
\exists X\ \forall Y : P(I,X,Y),
\]
where $P$ is decidable in polynomial time, $I$ denotes the instance, and all quantified variables have
polynomial encoding length. Similarly, a problem belongs to $\Sigma_3^p$ if it
can be expressed as
\[
\exists X\ \forall Y\ \exists Z : P(I,X,Y,Z).
\]
Such quantifier alternations arise naturally in robust and bilevel
optimization, where one may seek a solution that remains feasible under all
realizations of uncertainty while allowing corrective or recourse decisions.
The complexity results established below will therefore be stated in terms of
these levels of the polynomial hierarchy.

\paragraph{Small certificates and encoding lengths}
Apart from the bounded integer-programming-representable sets, all sets
considered in this paper contain vectors of arbitrarily large encoding length.
Consequently, placing a problem in a level of the polynomial hierarchy requires
showing that, whenever it is a YES-instance, the relevant quantified objects
can be chosen of polynomial encoding length. We collect here folklore result that we invoke repeatedly to this end. The first one essentially states that \textsc{Mixed-Integer Programming} is in $\NP$~\cite{conforti2014integer}, while the second one follows from applying~\cite[Section 17.1]{schrijver1998theory} to the projection of the mixed-integer hull $P_{MI}\equiv \conv(P\cap\{\R^{n^c}\times\Z^{n^d}\})$ onto its integer coordinates (vertices) and \cite[Section 10.2]{schrijver1998theory} to the recession cone of $P_{MI}$.

\begin{fact}
\label{fact:milp}
Let $P\subseteq\R^{n^c+n^d}$ be a rational polyhedron of facet complexity $\fc$. If
$P\cap\{\R^{n^c}\times\Z^{n^d}\}\neq\emptyset$, then $P\cap\{\R^{n^c}\times\Z^{n^d}\}$ contains a vector of encoding length polynomially bounded in $\fc$.
\end{fact}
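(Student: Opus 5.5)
This is Fact~\ref{fact:milp}: a nonempty mixed-integer set $P\cap(\R^{n^c}\times\Z^{n^d})$ contains a point of encoding length polynomial in $\fc$. I will prove it via the mixed-integer hull $P_{MI}=\conv(P\cap\{\R^{n^c}\times\Z^{n^d}\})$, as the paper suggests. The plan has three steps: (i) show that $P_{MI}$ is a rational polyhedron whose recession cone equals $\mathrm{rec}(P)$ and which is generated by a finite set of mixed-integer points of controlled size; (ii) find, among the generators, one mixed-integer point whose encoding length is polynomial in $\fc$; (iii) conclude directly, without needing a vertex of $P_{MI}$.

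For step (i), use the Minkowski--Weyl decomposition $P=\conv(V)+\mathrm{cone}(R)$. By \cite[Section 10.2]{schrijver1998theory}, the vertices $V$ and the extreme rays $R$, the latter scaled to integer vectors, have encoding length at most $4n^2\fc$, where $n=n^c+n^d$. Any point of $P$ can be written as $z=\sum_i\lambda_i v_i+\sum_j\mu_j r_j$ with $\lambda$ in the simplex and $\mu\ge 0$. Split $\mu_j=\lfloor\mu_j\rfloor+\{\mu_j\}$. Then $z=z'+\sum_j\lfloor\mu_j\rfloor r_j$, where $z'$ lies in the bounded polytope $Q=\conv(V)+\{\sum_j\theta_j r_j : 0\le\theta_j\le 1\}$. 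Since the $r_j$ are integral, $z$ is mixed-integer exactly when $z'$ is. Hence every mixed-integer point of $P$ is a mixed-integer point of $Q$ plus a nonnegative integer combination of the $r_j$.

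For step (ii), take any mixed-integer point $z'\in Q$ and fix its integer part $w=(z'_{n^c+1},\ldots,z'_n)$. The integer coordinates of every point of $Q$ are bounded in absolute value by $M=\sum_i\|v_i\|_\infty+\sum_j\|r_j\|_\infty$. Each term is at most $2^{4n^2\fc}$, and by Carath\'eodory only polynomially many generators are needed. So $\log M$ is polynomial in $\fc$, and $\enc{w}$ is polynomial. Now consider the slice $\{x\in\R^{n^c}:(x,w)\in P\}$. It is a nonempty rational polyhedron whose defining inequalities are those of $P$ with $w$ substituted, so its facet complexity is polynomial in $\fc$ and $\enc{w}$. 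By \cite[Section 10.2]{schrijver1998theory} again, this slice has a vertex, or a point on a minimal face, of polynomial encoding length. Call it $x^\ast$.

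For step (iii), the point $(x^\ast,w)$ lies in $P\cap(\R^{n^c}\times\Z^{n^d})$ and has encoding length polynomial in $\fc$, which proves the fact.

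The main obstacle is step (ii): bounding the integer coordinates. The worry is that $\lfloor\mu_j\rfloor$ is unbounded, but the decomposition above places the mixed-integer point inside the bounded polytope $Q$, which avoids this. The remaining care is purely bookkeeping: confirming that substituting $w$ keeps the facet complexity polynomial. This holds because each substituted inequality has size at most $\fc+n^d\,\enc{w}$ plus the cost of the products.
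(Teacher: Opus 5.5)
Your proof is correct. The paper does not prove this fact: it cites it as the folklore statement that mixed-integer linear feasibility is in $\NP$ (Conforti--Cornu\'ejols--Zambelli). Your argument is the standard Meyer-type proof behind that citation. You take integral generators of the recession cone and truncate the cone coefficients to $[0,1]$, which moves the mixed-integer point into a bounded polytope $Q$ without changing integrality. You then bound the integer part $w$ and pick a small point of the continuous slice at $w$. The citation is shorter. Your version is self-contained, gives an explicit polynomial bound, and the decomposition into a mixed-integer point of $Q$ plus an integer combination of the $r_j$ is also what underlies the paper's sketch of Fact~\ref{fact:inthull}.

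There are a few bookkeeping points, none of which threatens the conclusion.

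\begin{itemize}
\item As you define it, $M$ sums over \emph{all} generators, and Carath\'eodory does not shrink that sum. Either apply Carath\'eodory to the representation of the original point $z$ before rounding: at most $n+1$ coefficients are nonzero, hence $\|z'\|_\infty\le (n+1)\,2^{O(n^2\varphi)}$. Or observe that there are at most $2^{O(n^2\varphi)}$ distinct vectors of size $O(n^2\varphi)$, so $\log M$ is polynomial anyway.
\item Scaling the rays to integral vectors can increase their size by roughly a factor $n$.
\item For non-pointed $P$, the $v_i$ are points of minimal faces, and the lineality directions enter $R$ with both signs.
\item The claim announced in step (i), that $P_{MI}$ is a polyhedron with recession cone $\mathrm{rec}(P)$, is neither proved nor used. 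It can simply be dropped.
\end{itemize}
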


\begin{fact}
\label{fact:inthull}
Let $P\subseteq\R^{n^c+n^d}$ be a rational polyhedron of facet complexity $\fc$. Then, each element of $\ext(P_{MI})$ and $\ray(P_{MI})$ has encoding length polynomially bounded in $\fc$.
\end{fact}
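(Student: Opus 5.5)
The plan is to reduce everything to the classical bounds on vertex and facet complexity of rational polyhedra from~\cite[Section 10.2]{schrijver1998theory}. Write $S\equiv\R^{n^c}\times\Z^{n^d}$, $n=n^c+n^d$, and assume $P\cap S\neq\emptyset$ (otherwise $P_{MI}=\emptyset$ and there is nothing to prove). Since $P$ has facet complexity $\fc$, it has vertex complexity at most $4n^2\fc$. Hence $P=\conv(V)+\operatorname{cone}(R)$, where $V$ and $R$ are finite sets of vectors of encoding length polynomial in $\fc$. Multiplying each ray by the product of its denominators, we may take $R$ integral, still of polynomial encoding length. Let $C=\operatorname{cone}(R)$ be the recession cone of $P$.

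\emph{Step 1 (Meyer-type decomposition).} Let $B\equiv\conv(V)+\{\sum_{r\in R}\lambda_r r \mid \lambda\in[0,1]^R\}$, which is a polytope. I will show that
\[
P_{MI}=\conv(B\cap S)+C.
\]
For the inclusion $\subseteq$, take $x\in P\cap S$ and write $x=q+\sum_r\mu_r r$ with $q\in\conv(V)$ and $\mu\ge 0$. Set $x'=x-\sum_r\lfloor\mu_r\rfloor r$. Since $R$ is integral, $x'$ still has integral coordinates in its last $n^d$ entries, so $x'\in B\cap S$. Moreover $x-x'\in C$, because each $\lfloor\mu_r\rfloor\ge 0$. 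The inclusion $\supseteq$ is immediate, since $B\subseteq P$ and $P_{MI}+C\subseteq P_{MI}$.

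\emph{Step 2 (rays).} Step~1 shows that the recession cone of $P_{MI}$ is $C$. Every extreme ray of $P_{MI}$ is therefore an extreme ray of $C$, and can be represented by an element of $R$, which has polynomial encoding length.

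\emph{Step 3 (vertices).} This is the step I expect to be the main obstacle. By Step~1, every $v\in\ext(P_{MI})$ is an extreme point of $\conv(B\cap S)$, so $v\in B\cap S$. Let $z\in\Z^{n^d}$ be the integer part of $v$. I claim $v$ is a vertex of the fiber $P_z\equiv P\cap\{x : x^d=z\}$. Indeed, $P_z\subseteq P_{MI}$, since every point of $P_z$ lies in $P\cap S$. If $v$ were a proper convex combination of two distinct points of $P_z$, it would fail to be extreme in $P_{MI}$.

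Next, $B$ lies in a box whose coordinates are bounded by $M=2^{p(\fc)}$ for some polynomial $p$. This follows because $B$ is built from polynomially many vectors (at most exponential in $n$, but each entry of each vector is bounded by $2^{\mathrm{poly}(\fc)}$) with coefficients in $[0,1]$. More carefully, every point of $B$ is a sum of at most $|V|+|R|$ terms. Since $|V|$ and $|R|$ may be exponential, I will instead bound $B$ through Carathéodory: each point of $B$ is a combination of at most $n+1$ points of $V$ plus at most $n$ rays of $R$ with weights in $[0,1]$. This gives $\|z\|_\infty\le (2n+1)\,2^{\mathrm{poly}(\fc)}$, so $\enc{z}$ is polynomial in $\fc$.

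Consequently, $P_z$ is described by the inequalities of $P$ together with the equalities $x^d=z$. Its facet complexity is therefore polynomial in $\fc$. Applying~\cite[Section 10.2]{schrijver1998theory} again shows that its vertex $v$ has encoding length polynomial in $\fc$, which concludes the argument. The delicate point is precisely this Carathéodory argument, which controls $\|z\|_\infty$ without relying on $|V|$ or $|R|$.
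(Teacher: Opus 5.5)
Your overall architecture is sound: the integral rescaling of $R$, the Meyer-type identity $P_{MI}=\conv(B\cap S)+C$, the identification of the recession cone of $P_{MI}$ with $C$, and the observation that a vertex $v$ of $P_{MI}$ is a vertex of the fiber $P_z$ are all correct.

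The gap is exactly the step you flag as delicate. The claim that every point of $B$ uses at most $n$ elements of $R$ with weights in $[0,1]$ is false. Carath\'eodory applied to the polytope $\{\lambda\in[0,1]^R : \sum_r\lambda_r r=w\}$ only yields a representation with at most $n$ \emph{fractional} coordinates. The remaining coordinates lie in $\{0,1\}$, and those equal to $1$ still contribute. For example, take $V=\{\zero^3\}$ and $R=\{(1,0,1),(0,1,1),(-1,0,1),(0,-1,1)\}$, a cone over a square. The point $\sum_{r\in R}r=(0,0,4)$ lies in $B$, but any combination of at most three generators with weights in $[0,1]$ has third coordinate at most $3$. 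So your argument does not justify the bound $\|z\|_\infty\le(2n+1)2^{\mathrm{poly}(\fc)}$ as written.

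The repair is easy. In fact, the worry that pushed you toward Carath\'eodory is unfounded. After removing duplicates (and rescaling $R$), $V$ and $R$ consist of distinct vectors of encoding length at most some $\nu$ polynomial in $\fc$. Hence $|V|,|R|\le 2^{\nu+1}$, and every point of $B$ satisfies $\|\cdot\|_\infty\le\max_{v\in V}\|v\|_\infty+|R|\max_{r\in R}\|r\|_\infty\le 2^{O(\nu)}$. Exponentially many generators cost only $O(\nu)$ extra bits.

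Alternatively, apply Carath\'eodory \emph{before} rounding in Step~1. Write $x-q$ as a nonnegative combination of a linearly independent set $R'\subseteq R$ with $|R'|\le n$. Then replace $B$ by the finite union, over all such $R'$, of $\conv(V)+\{\sum_{r\in R'}\lambda_r r:\lambda\in[0,1]^{R'}\}$. This set is still contained in $P$, the decomposition still holds, and now the $n$-term bound is valid. This is essentially how the size bounds in \cite[Section 17.1]{schrijver1998theory} are obtained. With either fix, your proof is complete.

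Compared with the paper, your route is longer but self-contained. The paper only invokes \cite[Section 17.1]{schrijver1998theory} on the projection of $P_{MI}$ onto the integer coordinates, and \cite[Section 10.2]{schrijver1998theory} on the recession cone. Your Step~2 supplies the equality of the recession cones of $P_{MI}$ and $P$, which that citation tacitly needs. Your fiber argument also handles vertices of $P_{MI}$ whose integer part is not a vertex of the projected hull, a step the citation leaves implicit. An example is the vertex $(1,1)$ of $P=\conv\{(0,0),(1,1),(0,2)\}$ with integral second coordinate.
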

The proofs of membership presented in this paper all follow a common pattern. They reduce a membership claim to ``small witness'' conditions for each quantifier block plus the polynomial-time decidability of the predicate. For $\static$ (resp. $\Kadapt{\k}$), given a YES-instance $I$, this amounts to finding an outermost certificate $\hat{x}$ (resp. $\hat{x},\hat{y}^1, \ldots, \hat{y}^\k$) with polynomial encoding length. Then, given such a certificate $\hat{x}$ (resp. $\hat{x},\hat{y}^1, \ldots, \hat{y}^\k$) with polynomial encoding length, we need to show that the remaining problem, the instance of which is
$(I,\hat x)$,
is in $\coNP$, or equivalently, that its complement is in $\NP$. Hence, if
$(I,\hat x)$
is a NO-instance, there must exist a certificate $\hat u$ of polynomial encoding length, and $\ell\in[L]$, such that $\hat u^\top H_\ell \hat x > 0$, and similarly for $\Kadapt{\k}$. For \twostage, given an outermost certificate of polynomially bounded encoding length $\hat x$, we need to show that the remaining problem is in $\Pi_2^p$, or that its complement is in $\Sigma_2^p$, by constructing a feasible certificate $\hat u$ of polynomially bounded encoding length for each NO-instance. Once the existence of $\hat x$ and $\hat u$ has been certified, the remaining problem, seeking $y\in\Y(\hat x,\hat u)$, is in $\NP$ by Fact~\ref{fact:milp}.

\paragraph{Theory of the reals}
The complexity class $\exists\R$ contains all decision problems that can be
polynomially reduced to the canonical problem
\textsc{Existential-Theory-of-the-Reals} (ETR). The latter consists of deciding
the truth of sentences of the form
\[
\exists X_1,\ldots,\exists X_n : F(X_1,\ldots,X_n),
\]
where the variables take values in $\R$ and where $F$ is a quantifier-free
formula built from polynomial equalities and inequalities with integer
coefficients by means of the Boolean connectives. In this sense, ETR plays for
real algebraic decision problems a role analogous to that of
\textsc{Satisfiability} for $\NP$.

More generally, the existential theory of the reals extends to a hierarchy with
alternating quantifiers over real variables, analogous to the polynomial
hierarchy~\cite{schaefer2024existential}. For $k\geq1$, the class $\Sigma_k\R$
(resp.\ $\Pi_k\R$) consists of all decision problems that are polynomially
reducible to deciding the truth of prenex sentences
\[
Q_1 X_1\ Q_2 X_2\ \cdots\ Q_k X_k : F(X_1,\ldots,X_k),
\]
where each $X_i$ is a block of real variables, the quantifiers $Q_1,\ldots,Q_k$
alternate and begin with $\exists$ (resp.\ $\forall$), and $F$ is a
quantifier-free formula over polynomial (in)equalities with integer
coefficients as above. In particular $\Sigma_1\R=\exists\R$. It is
known~\cite{Shor1991Stretchability,existsRandPSPACE} that
\[
\NP \subseteq \exists\R \subseteq \PSPACE,
\]
and it is widely conjectured that $\NP \neq \exists\R$~\cite{schaefer2024existential};
consequently, $\exists\R$-hard problems are not expected to belong to $\NP$. The
precise relationship between the real hierarchy and the polynomial hierarchy
remains unknown. The (full first-order) theory of the reals is
decidable~\cite{basu2006algorithms}; in particular every level $\Sigma_k\R$ is
contained in $\PSPACE$.

\paragraph{Undecidability}
A decision problem is said to be undecidable if no algorithm can correctly
answer every instance with a YES-or-NO decision in finite time. In particular,
undecidable problems cannot belong to $\PSPACE$, nor to any other complexity
class consisting solely of decidable problems.

\paragraph{Quadratic optimization}
Quadratic constraints naturally arise in robust optimization problems with
decision-dependent uncertainty sets. Below, we define the quadratic
optimization problems relevant to our analysis and summarize their known
complexity classifications. Let $\mathcal{Z} \subseteq \R^{n_z}$ be a rational
polyhedron given by its outer representation, and let $Q_\ell \in
\mathbb{Q}^{n_z \times n_z}$ be symmetric matrices for $\ell \in [L]$. We consider the
following decision problems:
\begin{align}
    \label{eq:QMIP}\tag{\QMIP}
    &\exists z \in \mathcal{Z} \cap (\R^{\nzc} \times \Z^{\nzd}): z^\top Q_\ell z \leq 0, \; \forall \ell \in [L]? \\
    \label{eq:QIP}\tag{\QIP}
    &\exists z \in \mathcal{Z} \cap \Z^{n_z}: z^\top Q_\ell z \leq 0, \; \forall \ell \in [L]? \\
    \label{eq:QP}\tag{\QP}
    &\exists z \in \mathcal{Z}: z^\top Q_\ell z \leq 0, \; \forall \ell \in [L]? \\
    \label{eq:boundedQP}\tag{\boundedQP}
    &\exists z \in \mathcal{Z} \cap [-R, R]^{n_z}: z^\top Q_\ell z \leq 0, \; \forall \ell \in [L]?
\end{align}
For the single-constraint variants, we define:
\begin{align}
    \label{eq:QMIPsingle}\tag{\QMIPsingle}
    &\exists z \in \mathcal{Z} \cap (\R^{\nzc} \times \Z^{\nzd}): z^\top Q z \leq 0? \\
    \label{eq:boundedQMIPsingle}\tag{\boundedQMIPsingle}
    &\exists z \in \mathcal{Z} \cap (\R^{\nzc} \times \Z^{\nzd}) \cap [-R, R]^{n_z}: z^\top Q z \leq 0?
\end{align}
As for sets $\Xc$, $\Uc$, and $\Yc(x,u)$, the homogeneous, zero right-hand-side form above is without loss of generality:
a general quadratic constraint $\tilde z^\top \tilde Q_\ell \tilde z + \tilde q_\ell^\top \tilde z \leq \tilde r_\ell$ is
turned into one of the form $z^\top  Q_\ell z \leq 0$ by
introducing an auxiliary variable $z_{\nzt}$, appending the linear constraint
$z_\nzt = 1$ to $\mathcal Z$, and setting $z = (\tilde z, z_{\nzt})$.

The following theorems summarize the complexity of these problems.

\begin{theorem}[\cite{jeroslow1973there}]
    \label{thm:QIP}
    \QIP and \QMIP are undecidable.
\end{theorem}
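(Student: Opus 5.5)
The plan is to reduce from Hilbert's tenth problem. By the Matiyasevich--Robinson--Davis--Putnam theorem, no algorithm decides whether a polynomial $p\in\Z[t_1,\ldots,t_m]$ has an integer root. It therefore suffices to give a computable many-one reduction from this problem to \QIP. Undecidability of \QMIP then follows at once, since \QIP is the special case $\nzc=0$.

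\textbf{Quadratization.} Given $p$ as a list of monomials with integer coefficients, I will introduce integer auxiliary variables that express every monomial through products of exactly two variables. For each monomial $t_{i_1}t_{i_2}\cdots t_{i_k}$, written with repetition, I add variables $s_1=t_{i_1}$ and $s_j = s_{j-1}\,t_{i_j}$ for $j=2,\ldots,k$. If the exponents are encoded in binary, I would instead use repeated squaring, $w_{i,0}=t_i$ and $w_{i,r+1}=w_{i,r}^2$, and then multiply the needed powers together. Either way, the number of new variables and constraints stays polynomial in $\enc{p}$. Each defining relation $s_j - s_{j-1}t_{i_j}=0$ is a quadratic equation in integer variables. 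After this step, $p(t)=0$ becomes a linear equation $\sum_\mu c_\mu s^{(\mu)}=0$ in the final monomial variables.

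\textbf{Matching the \QIP format.} Every equation $q(z)=0$ is replaced by the two inequalities $q(z)\le 0$ and $-q(z)\le 0$. The linear equation can go directly into the polyhedron $\mathcal Z$. Affine and linear terms are homogenized as described just before the theorem: append a variable $z_{\nzt}$ and the constraint $z_{\nzt}=1$ to $\mathcal Z$. Then every constraint has the form $z^\top Q_\ell z\le 0$ with symmetric rational $Q_\ell$, obtained by splitting each bilinear coefficient evenly between the entries $(i,j)$ and $(j,i)$. The polyhedron $\mathcal Z$ is otherwise all of $\R^{\nzt}$, so it contains no sign restrictions, and integrality is imposed on all coordinates.

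\textbf{Correctness.} If $p(\hat t)=0$ for some integer vector $\hat t$, then setting the auxiliary variables to the corresponding products gives an integer point that satisfies all constraints. Conversely, any feasible integer $z$ forces the auxiliary variables to equal those products, so its $t$-part is an integer root of $p$.

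The only delicate point is keeping the instance size computable and polynomial when exponents are large; the binary-exponent repeated-squaring construction handles this. Since undecidability needs only a computable reduction, even the naive unary expansion would suffice.
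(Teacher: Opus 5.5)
Your reduction is correct. Hilbert's tenth problem, quadratized with integer product variables (using repeated squaring for binary exponents) and homogenized via $z_{\nzt}=1$, gives an equivalent \QIP instance, and \QMIP inherits undecidability as the special case $\nzc=0$. The paper gives no proof of its own and only cites Jeroslow, whose argument is essentially this same reduction from Matiyasevich's negative solution of Hilbert's tenth problem.
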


\begin{theorem}[\cite{schaefer2024existential}]
    \label{thm:QP}
    \QP and \boundedQP are $\exists\R$-complete.
\end{theorem}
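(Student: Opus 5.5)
Theorem~\ref{thm:QP} is a known result quoted from~\cite{schaefer2024existential}. The plan is to reconstruct it by proving membership and hardness separately. In both directions the only work is to translate between the homogeneous format of \QP and \boundedQP and the standard forms used in the $\exists\R$ literature.

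\emph{Membership.} An instance of \QP is already an existential sentence over the reals. The variables are $z_1,\ldots,z_{n_z}$. The formula $F$ is the conjunction of the linear inequalities describing $\mathcal{Z}$ and the quadratic inequalities $z^\top Q_\ell z\le 0$ for $\ell\in[L]$. Rational coefficients become integer coefficients after multiplying each (in)equality by the least common denominator of its coefficients. This keeps the encoding length polynomial. Hence \QP is in $\exists\R$.

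For \boundedQP, we add the $2n_z$ linear constraints $-R\le z_i\le R$ to the formula, with $R$ written in binary. Because $R$ is an integer constant in the sentence, this is allowed and the size grows only polynomially. So \boundedQP is also in $\exists\R$.

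\emph{Hardness of \QP.} We would start from the standard $\exists\R$-complete problem of deciding whether a system of quadratic polynomial equations $p_1(w)=0,\ldots,p_m(w)=0$ with integer coefficients has a real solution. This problem is hard because any ETR formula can be brought to that form in polynomial time: introduce auxiliary variables for intermediate products, and eliminate the Boolean connectives and strict inequalities with the usual tricks. Given such a system, we turn it into a \QP instance as follows.
\begin{itemize}
\item Homogenise by adding a variable $z_{n_z}$ and putting the linear constraint $z_{n_z}=1$ into $\mathcal{Z}$, as explained before the theorem.
\item Write each equation $p_j=0$ as the pair of inequalities $z^\top Q_j z\le 0$ and $z^\top(-Q_j)z\le 0$.
\item Let $\mathcal{Z}$ otherwise be the whole space.
\end{itemize}
The \QP instance has a solution exactly when the original system does, so \QP is $\exists\R$-hard.

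\emph{Hardness of \boundedQP.} This is the step I expect to be the main obstacle, because the solutions of a feasible quadratic system may all have doubly exponential magnitude. A radius $R$ of that size cannot be written with polynomially many bits. I would therefore not bound $R$ directly. Instead I would start from a compact $\exists\R$-complete variant, following Schaefer and \v{S}tefankovi\v{c}: deciding whether a system of quadratic equations has a solution in the unit ball, or in $[-1,1]^n$.

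If I had to prove that variant from scratch, I would proceed in three steps:
\begin{enumerate}
\item Use the known singly-exponential bound on the bit size (hence doubly-exponential bound on the magnitude) of some solution of a feasible ETR instance.
\item Introduce variables $t_0=\tfrac12$ and $t_{k+1}=t_k^2$ by repeated squaring. After polynomially many squarings this gives a variable equal to $\varepsilon = 2^{-2^{\mathrm{poly}}}$, using only polynomially many quadratic equations with small coefficients.
\item Substitute $w=\tilde w/\varepsilon$ and multiply each equation by the appropriate power of $\varepsilon$, realised through auxiliary product variables, so that every equation is polynomial in the $\tilde w$ and $\varepsilon$-variables. Then all relevant solutions lie in $[-1,1]^n$.
\end{enumerate}
With that variant in hand, we apply the same homogenisation and equality-splitting as for \QP and take $R=1$. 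This yields a polynomial reduction, so \boundedQP is $\exists\R$-hard. Together with membership, both problems are $\exists\R$-complete.
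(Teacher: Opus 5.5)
Your proof is correct, but there is no proof in the paper to compare it with step by step. The paper imports this theorem as a black box from the $\exists\R$ compendium and uses it only to drive the reductions behind Corollary~\ref{cor:XcUcx}.

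You instead rebuild the standard argument behind that citation:
\begin{itemize}
\item membership, by reading an instance directly as an existential sentence;
\item hardness of \QP, from feasibility of quadratic equations via homogenisation and by splitting each equation into two inequalities;
\item hardness of \boundedQP, from the compact variant obtained by the Schaefer--\v{S}tefankovi\v{c} rescaling, with $\varepsilon=2^{-2^{\mathrm{poly}}}$ produced by repeated squaring.
\end{itemize}
Compared with the bare citation, your route makes explicit why \boundedQP does not follow trivially from \QP. The radius $R$ is part of the input in binary and cannot encode the doubly exponential magnitude that solutions may require. It also shows why $R=1$ suffices after rescaling. The price is that you lean on a semialgebraic radius bound, which the citation hides.

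Two small precisions.
\begin{itemize}
\item Step~1 should be stated as a radius bound: a nonempty set described by integer polynomials of total size $L$ meets the ball of radius $2^{2^{\mathrm{poly}(L)}}$. Speaking of the ``bit size'' of a solution only makes sense for an algebraic encoding, since real solutions may be irrational.
\item Taking $R=1$ is correct only if you start from the box version $[-1,1]^n$. If you start from the unit-ball version, a zero in the box need not lie in the ball. You must then also add the constraint $\sum_{i<n_z} z_i^2 - z_{n_z}^2\le 0$, which is again of the admissible form $z^\top Q z\le 0$.
\end{itemize}
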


\begin{theorem}[\cite{pia2017mixed}]
    \label{thm:QMIPsingle}
    \boundedQMIPsingle and \QMIPsingle are $\NP$-complete.
\end{theorem}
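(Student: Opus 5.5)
Since this statement is quoted from~\cite{pia2017mixed}, I would reconstruct its proof along the lines of that paper, treating hardness and membership separately. Because \boundedQMIPsingle is a special case of \QMIPsingle, it suffices to prove $\NP$-hardness for the bounded variant and $\NP$-membership for the unbounded one.

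\emph{Hardness.} I would take $Q=0$. Then \boundedQMIPsingle reduces to deciding whether a bounded polyhedron $\mathcal Z\cap[-R,R]^{n_z}$ contains an integer point. This is bounded integer linear feasibility, which is $\NP$-hard, for instance by a direct reduction from \textsc{Subset-Sum} with binary variables and $R=1$. The homogenisation $z_{\nzt}=1$ described above keeps the constraint in the form $z^\top Q z\leq 0$.

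\emph{Membership.} I would study the optimization problem $\inf\{z^\top Q z : z\in\mathcal Z\cap(\R^{\nzc}\times\Z^{\nzd})\}$ and distinguish whether its value is finite or $-\infty$.
\begin{itemize}
\item If the infimum is finite, I would invoke the result of~\cite{pia2017mixed} that a mixed-integer quadratic program with finite value attains it at a point of encoding length polynomial in the input. That point serves directly as an $\NP$ certificate: one checks membership in $\mathcal Z$, integrality, and the sign of $z^\top Q z$ in polynomial time.
\item If the infimum is $-\infty$, I would use the structure behind their proof. The mixed-integer hull $P_{MI}$ has vertices and rays of polynomial size by Fact~\ref{fact:inthull}. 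Moreover, unboundedness is witnessed either by a small feasible point $\bar z$ and a small integer ray $d$ of $P_{MI}$ with $d^\top Q d<0$, or with $d^\top Qd=0$ and $2\bar z^\top Q d<0$. In either case $\bar z+td$ is feasible for every integer multiple $t$ that keeps the point mixed-integer, and it has negative value for a polynomially bounded $t$. So $\bar z+td$ itself is a small certificate.
\end{itemize}

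The main obstacle is the finite case, namely showing that a mixed-integer \emph{nonconvex} quadratic program has an optimal solution of polynomial size. I would follow~\cite{pia2017mixed}. First, decompose $P_{MI}$ into a polynomial-size region in which only polynomially many integer assignments matter, plus the recession cone. Next, for each fixed integer part, the residual continuous problem is a nonconvex QP over a polyhedron, which has a small optimal solution by Vavasis' argument: the KKT conditions on an optimal face reduce to a linear system. Finally, I would argue via a lattice/ray decomposition that an optimal integer part can be taken within polynomial distance of a vertex of $P_{MI}$.
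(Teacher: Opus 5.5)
Your proposal is correct and matches the paper, which gives no argument of its own beyond quoting Del Pia, Dey and Molinaro: hardness is immediate from bounded integer feasibility with $Q=0$, and membership rests on their theorem that a YES-instance of mixed-integer quadratic programming has a witness of polynomial encoding length. If you write out your supplementary sketches, note two things. First, a direction $d$ with $d^\top Q d<0$ (or with $d^\top Q d=0$ and $\bar z^\top Q d<0$) need not be an extreme ray of $P_{MI}$, so its small size must come from a QP/KKT argument on the recession cone (e.g.\ Vavasis' theorem on a normalized slice) rather than from Fact~\ref{fact:inthull}. Second, the bounded part of the decomposition generally contains exponentially many integer assignments, and the optimal integer part is only within distance $2^{\mathrm{poly}}$ (not polynomial distance) of a vertex; what matters is polynomial encoding length, and establishing it for the integer part is precisely the nontrivial content of the cited paper.
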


\section{Complexity of {\static}} \label{sec:static}

We consider in this section the complexity of \static and \boundedstatic, summarized in Table~\ref{tab:static}. Our results start by reformulating \static as the problem of verifying whether the following set is non-empty:
\begin{equation}
    \label{eq:Xstatic}
\Xstatic \equiv \set{x\in \X}{u^\top H_\ell x\leq 0,\; \forall\ell\in[L], u\in \U(x)}.
\end{equation}

We first consider continuous uncertainty sets, possibly depending on $x$. In this case, linear programming duality can be used to reformulate constraints $u^\top H_\ell x\leq 0,\; \forall u\in \U(x)$ as follows:
\begin{align}
u^\top H_\ell x \leq 0,\; \forall u\in \Ucx &\Leftrightarrow
\max_{u\in\Ucx} u^\top H_\ell x \leq 0\label{eq:dualization:first} \\
&\Leftrightarrow
\min\set{\alpha_\ell^\top b}{\alpha_\ell^\top B(x)\geq H_\ell x}\leq 0 \\
&\Leftrightarrow
\set{\alpha_\ell\in\R^{\muu}_+}{\alpha_\ell^\top B(x)\geq H_\ell x,\; \alpha_\ell^\top b\leq 0}\neq \emptyset\label{eq:staticdual}
\end{align}
Plugging~\eqref{eq:staticdual} into~\eqref{eq:Xstatic}, we obtain
\begin{equation}
    \label{eq:static_nonlinear}
\Xstatic_{\textup{lift}} \equiv \set{x\in \X, \alpha\in\R^{L\times\muu}_+}{\alpha_\ell^\top B(x)\geq H_\ell x,\; \alpha_\ell^\top b\leq 0,\;\forall \ell\in[L]},
\end{equation}
where $\Xstatic$ is non-empty, if and only if $\Xstatic_{\textup{lift}}$ is non-empty. Indeed, $x\in\Xstatic$ if and only if there is some $\alpha\in\R^{L\times\muu}_+$ such that $(x,\alpha)\in\Xstatic_{\textup{lift}}$. Note that $\Xstatic_{\textup{lift}}$ is a polyhedron whenever $\X=\Xc$ and $B(x)=B$ for all $x\in\Xc$, showing the well-known result that $\static(\Xc,\Uc)$ is in $\P$. Similarly, $\X=\Xd$ and $B(x)=B$ for all $x\in\Xd$ leads to a mixed-integer programming description for $\Xstatic$, which is $\NP$ (as formalized in Fact~\ref{fact:milp}), and the hardness follows by taking $\Xd$ as the feasibility set of the ($\NP$-hard) vertex cover problem and $\Uc$ as the singleton $\{\one^\nut\}$. As both $\Xd$ and $\Uc$ are bounded, this reduction also proves the hardness of $\boundedstatic(\Xd,\Uc)$.

In the case where $B(x)$ is an arbitrary linear function of $x$,~\eqref{eq:static_nonlinear} is a set defined by polynomial constraints of degree at most 2, so the problem is in $\exists\R$ in the case all variables are continuous. Additionally, it is not hard to see that~\eqref{eq:static_nonlinear} encompasses the quadratically-constrained sets used in the definitions of \QP and \QIP. Assuming $L=1$ (and removing the index $\ell$) and $\nxt=\muu+1$, the aforementioned reduction is easily seen by taking $2\muu$ constraints of $\alpha^\top B(x)\geq Hx$ (and the auxiliary variable $x_{\nxt}=1$) to impose $\alpha_i=x_i$ for $i\in[\muu]$, with the remaining constraints modeling the $L$ quadratic constraints present in \QIP or \QP, and setting $b=\zero^{\muu}$, as well as
the linear constraints of $\mathcal{Z}$.
From Theorems~\ref{thm:QIP} and~\ref{thm:QP}, we obtain the following two results.
\begin{corollary}
    \label{cor:XcUcx}
$\boundedstatic(\Xc,\Ucx)$ and $\static(\Xc,\Ucx)$ are $\exists\R$-complete.
\end{corollary}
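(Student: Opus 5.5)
The corollary has two parts, membership and hardness, and I handle them separately. Because the bounded variant is a special case of \static, it suffices to prove membership of $\static(\Xc,\Ucx)$ in $\exists\R$ and $\exists\R$-hardness of $\boundedstatic(\Xc,\Ucx)$.

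\emph{Membership.} I use the dualization \eqref{eq:dualization:first}--\eqref{eq:staticdual}. It shows that $\Xstatic$ is nonempty if and only if $\Xstatic_{\textup{lift}}$ in \eqref{eq:static_nonlinear} is nonempty. With $\X=\Xc$, the set $\Xstatic_{\textup{lift}}$ is defined by the following constraints in real variables $(x,\alpha)$:
\begin{itemize}
\item the linear constraints $Ax\le a$, $x\ge 0$ and $\alpha\ge 0$;
\item the linear constraints $\alpha_\ell^\top b\le 0$;
\item the constraints $\alpha_\ell^\top B(x)\ge H_\ell x$, which are bilinear (degree two) because $B$ is linear in $x$.
\end{itemize}
After clearing denominators of the rational data, this is an existential sentence over the reals of size polynomial in the input. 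Hence $\static(\Xc,\Ucx)\in\exists\R$.

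One subtlety is that LP duality needs $\Ucx\neq\emptyset$ for the equivalence $\max\le 0 \Leftrightarrow \min\le 0$. If $\Ucx=\emptyset$, then $x$ is trivially robust feasible, but the dual might still be infeasible. I handle this case by also allowing a Farkas certificate of emptiness of $\Ucx$: some $\beta\ge 0$ with $\beta^\top B(x)\ge 0$ and $\beta^\top b<0$. The certificate for $x$ is then the disjunction of the two systems. Disjunctions are permitted in ETR formulas, so membership still holds.

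\emph{Hardness.} I reduce \boundedQP, which is $\exists\R$-hard by Theorem~\ref{thm:QP}, to $\boundedstatic(\Xc,\Ucx)$, following the encoding sketched just before the corollary.

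Take an instance with polyhedron $\mathcal Z$, matrices $Q_\ell$ and bound $R$. I first shift variables so they are nonnegative: set $z=z'-R\one$ with $z'\in[0,2R]^{n_z}$. I then add the coordinate $x_{\nxt}=1$ to handle affine terms. The set $\Xc$ encodes $\mathcal Z$ and the box.

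The uncertainty set is chosen with $L=1$, $H x=\zero$ in the part that matters, and $b=\zero$. Its rows $\alpha^\top B(x)\ge Hx$ are built to force, in any dual solution:
\begin{itemize}
\item the equalities $\alpha_i=x_i$;
\item the inequalities $-x^\top \tilde Q_\ell x\ge 0$, one for each $\ell$.
\end{itemize}
A row of $B(x)$ is a linear function of $x$, so the expression $\alpha^\top B(x)$ can produce any quadratic form in $(\alpha,x)$. Once $\alpha=x$ is imposed, it produces any quadratic form in $x$.

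Under this construction, \eqref{eq:static_nonlinear} is nonempty exactly when the \boundedQP instance is a YES-instance. All sets are bounded: $\Xc$ by construction, and $\Ucx$ by adding the box $u\in[0,R']$ to its description.

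\emph{Main obstacle.} The delicate step is making the encoding exact rather than just suggestive. Three points need care:
\begin{itemize}
\item Adding box constraints to $\Ucx$ adds dual variables. These must not break the forcing $\alpha_i=x_i$, so I will keep $\alpha$ confined to the dual variables of the non-box rows, for example by giving the box rows large dual cost, or by writing them into $b$.
\item The dual multipliers must be nonnegative. This matches $x\ge 0$ after the shift.
\item The encoding must remain polynomial in size, and the radius bound $R$ must be preserved.
\end{itemize}
I will check the primal side directly. For fixed $x$, the maximum of $u^\top Hx$ over $\Ucx$ should be at most $0$ if and only if $x^\top\tilde Q_\ell x\le 0$ for all $\ell$. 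This avoids any reliance on degenerate duality cases.
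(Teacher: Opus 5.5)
Your proposal is correct and follows the same route as the paper. Membership holds because \eqref{eq:static_nonlinear} is a system of degree-two polynomial constraints. Hardness comes from encoding \boundedQP into the dual constraints $\alpha^\top B(x)\ge Hx$ with $L=1$ and $b=\zero$: pairs of constraints force $\alpha_i=x_i$, and the remaining ones carry the quadratic inequalities.

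Your additional care fills in details the paper passes over: the Farkas disjunction for empty $\Ucx$, the shift making the variables nonnegative, and the box on $u$ (harmless, since with $b=\zero$ a nonpositive dual objective forces the box multipliers to zero). Just make explicit that $H$ is nonzero on the forcing constraints, e.g.\ $(Hx)_j=\pm x_i$ against $B_{ij}(x)=\pm x_{\nxt}$. With $H=\zero$ and $b=\zero$, the choice $\alpha=\zero$ would always be dual feasible.
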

\begin{corollary}
    \label{cor:XdUcx}
$\static(\Xd,\Ucx)$ is undecidable.
\end{corollary}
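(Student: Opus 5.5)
We prove Corollary~\ref{cor:XdUcx} by a many-one reduction from \QIP, which is undecidable by Theorem~\ref{thm:QIP}. Since a computable reduction from an undecidable problem yields an undecidable problem, it suffices to map every \QIP instance to a $\static(\Xd,\Ucx)$ instance with the same answer. The reduction is the one sketched above for the continuous case, now with $\X=\Xd$.

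Given a \QIP instance with polyhedron $\mathcal Z=\{z: Dz\le d\}\subseteq\R^{n_z}$ and matrices $Q_1,\dots,Q_L$, first make it fit the nonnegative orthant $\Xd\subseteq\Z_+^{\nxt}$.
\begin{itemize}
\item Split each integer variable as $z=z^+-z^-$ with $z^\pm\in\Z_+^{n_z}$; this preserves integrality and the quadratic forms after substitution.
\item Adjoin the homogenizing variable $x_{\nxt}=1$ as in the definition of $\Xc$.
\item Set $\Xd=\{x\in\Z_+^{\nxt}: \text{linear constraints of }\mathcal Z,\ x_{\nxt}=1\}$.
\end{itemize}

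Next, use a single uncertain constraint, $L=1$, so that the lifted set~\eqref{eq:static_nonlinear} is exactly $\{(x,\alpha): \alpha^\top B(x)\ge Hx,\ \alpha^\top b\le 0\}$ with $b=\zero^{\muu}$. Choose $\muu=\nxt-1$ and the columns of $B(x)$, $H$ as follows.
\begin{itemize}
\item For each $i\in[\muu]$, use one column with $\alpha$-coefficient $x_{\nxt}$ (which equals $\alpha_i$) and right-hand side $x_i$.
\item Add the negated column for each $i$. Together these impose $\alpha_i=x_i$.
\item For each quadratic constraint $x^\top Q_\ell x\le 0$, add a column that expresses $-x^\top Q_\ell x\ge 0$ as a bilinear form $\sum_{i} \alpha_i (\text{linear in }x)$. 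This is possible once $\alpha_i=x_i$, with the terms involving $x_{\nxt}$ handled because $\alpha$ covers all other coordinates and $x_{\nxt}=1$ is constant.
\end{itemize}
Every entry of $B(x)$ is then linear in $x$, as the definition of $\Ucx$ requires.

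By the duality chain~\eqref{eq:dualization:first}--\eqref{eq:staticdual}, the instance is YES if and only if some $x\in\Xd$ admits $\alpha\ge 0$ with these constraints. That holds if and only if $(x_1,\dots,x_{\muu})$ with $\alpha=x\ge0$ satisfies all quadratic constraints, i.e.\ if and only if the \QIP instance is YES.

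The main obstacle is that the dualization step requires strong LP duality for every $x$. When $\Ucx$ is empty the robust constraint holds vacuously while the dual may be infeasible, and the dual may also be unbounded. I would handle this by checking that the chain~\eqref{eq:dualization:first}--\eqref{eq:staticdual} remains valid here. With $b=\zero$ the primal $\Ucx$ always contains $u=0$, so the primal is feasible. The primal is then a cone, so its optimum is $0$ or $+\infty$, and strong duality gives exactly the feasibility system. Finally, the map is clearly polynomial-time computable, which completes the undecidability argument.
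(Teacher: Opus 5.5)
Your proposal is correct and follows the paper's own route. Like the paper, you use the lifted formulation~\eqref{eq:static_nonlinear} with $L=1$ and $b=\zero^{\muu}$, add $2\muu$ dual constraints forcing $\alpha_i=x_i$ plus further dual constraints encoding the quadratic constraints, and conclude from the undecidability of \QIP in Theorem~\ref{thm:QIP}. Your extra checks fill in details the paper leaves implicit: the split $z=z^+-z^-$ fits the instance into $\Z_+^{\nxt}$, and $b=\zero$ makes $\Ucx$ a nonempty cone, so the dualization chain is valid for every $x$.
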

While Corollary~\ref{cor:XcUcx} illustrates hardness for both the bounded and unbounded cases, Corollary~\ref{cor:XdUcx} only focuses on the unbounded case. Indeed, in the bounded case, we show next that the problem is in $\NP$; the hardness with respect to this class follows from the hardness of the decision-independent case.
\begin{theorem}
    \label{thm:boundedXdUcx}
$\boundedstatic(\Xd,\Ucx)$ is in $\NP$.
\end{theorem}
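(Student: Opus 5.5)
The certificate will be the integer first-stage solution $\hat x\in\Xd$ itself. Once $\hat x$ is fixed, everything reduces to linear programming in $u$.

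First, the certificate has polynomial size. In the bounded setting, $\Xd\subseteq[0,R]^{\nxt}\cap\Z^{\nxt}$. Every integer point of $\Xd$ therefore has components in $\{0,\dots,R\}$, and $R$ is given in binary as part of the input. Hence any $\hat x\in\Xd$ has encoding length at most $\nxt\lceil\log_2(R+1)\rceil$, which is polynomial. So if the instance is a YES-instance, some witness $\hat x\in\Xstatic$ exists, and it automatically has polynomial encoding length; no appeal to Fact~\ref{fact:milp} is needed for this step.

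Second, the verifier checks the witness in polynomial time. Given $\hat x$, it checks $A\hat x\le a$, integrality and the bounds. Since $B$ is linear in $x$, it then computes the rational matrix $B(\hat x)$ and the vectors $c_\ell\equiv H_\ell\hat x$, all of polynomial encoding length.

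The remaining condition is that for every $\ell\in[L]$,
\[
\max\set{u^\top c_\ell}{u\in[0,R]^{\nut},\ B(\hat x)u\le b}\le 0.
\]
This is a linear program over a polytope with polynomial facet complexity. By the polynomial-time solvability of linear programming (ellipsoid or interior point method), the verifier solves these $L$ programs, or decides their feasibility, in polynomial time. If $\U(\hat x)=\emptyset$, the condition holds vacuously. Boundedness guarantees that each maximum is attained whenever the set is non-empty, so no unboundedness case arises.

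Alternatively, following~\eqref{eq:static_nonlinear}, the verifier can check feasibility of the dual system. Once $x=\hat x$ is fixed, the products $\alpha_\ell^\top B(\hat x)$ become linear in $\alpha_\ell$, so this is again a polyhedral feasibility problem. Here the dual has to include the box constraints $u\le R$, since duality needs them when the primal is otherwise unbounded.

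The main subtlety is just this observation. Fixing the integer $\hat x$ linearizes the bilinear terms of~\eqref{eq:static_nonlinear}, and boundedness is exactly what makes the integer $\hat x$ short. In the unbounded case, Corollary~\ref{cor:XdUcx} shows that no such short witness can be guaranteed in general.

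Together, these give membership in $\NP$. NP-hardness then follows from the decision-independent case $\boundedstatic(\Xd,\Uc)$, as noted above.
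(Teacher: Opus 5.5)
Your proof is correct and rests on the same idea as the paper's: integrality plus the box bound $R$ make the first-stage witness short, and once it is fixed the bilinear terms disappear, leaving only a linear problem. The one difference is how that residual linear problem is certified. The paper adds a dual vector $\alpha$ from the slice of \eqref{eq:static_nonlinear} to the certificate, kept short by Fact~\ref{fact:milp}, so the verifier only checks linear inequalities. You instead let the verifier solve the $L$ primal LPs in polynomial time, which has the small advantage of handling $\U(\hat x)=\emptyset$ and the box constraints $u\le R$ explicitly rather than through the duality step.
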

\begin{proof}
Consider again the reformulation $\Xstatic_{\textup{lift}}$ introduced in~\eqref{eq:static_nonlinear} and let $(x^*,\alpha^*)$ be a YES answer. Since $x^*$ is integer and bounded by $R$, it has polynomial encoding length. Therefore the projection onto $\alpha$ of the slice of $\Xstatic_{\textup{lift}}$ obtained by fixing $x$ to $x^*$,
\begin{equation}
    \nonumber
\set{\alpha\in\R^{L\times\nut}_+}{\alpha_\ell^\top B(x^*)\geq H_\ell x^*,\; \alpha_\ell^\top b\leq 0,\;\forall \ell\in[L]},
\end{equation}
is a rational polyhedron with polynomial encoding length, so by Fact~\ref{fact:milp} it is non-empty if and only if it contains a vector of polynomial encoding length.
\end{proof}

Let us further discuss reformulation~\eqref{eq:static_nonlinear} in the case where $L=1$ and the decision-dependence appears only in the right-hand side, $b(x)$, leading to the special case
\begin{equation}
    \nonumber
\Xstaticb_{\textup{lift}} \equiv \set{x\in \X, \alpha\in\R^{\nut}_+}{\alpha^\top B \geq Hx,\; \alpha^\top b(x)\leq 0}.
\end{equation}
Only one of the constraints defining $\Xstaticb_{\textup{lift}}$ is quadratic, showing that the problem $\boundedstatic(\X,\Ucx)$ is in $\NP$ for $\X\in\{\Xc,\Xd\}$, thanks to Theorem~\ref{thm:QMIPsingle}. In fact, relying on $\alpha^\top B\geq Hx$ to impose $\alpha_i=x_i$ for $i\in[\muu]$ as above, and choosing $b(x)$ appropriately, we obtain a reduction from \boundedQMIPsingle to \boundedstatic in this case, proving that $\boundedstatic(\X,\Ucx)$ with $L=1$ and right-hand side decision-dependence is $\NP$-complete for $\X\in\{\Xc,\Xd\}$.

The observation that $\static(\X,\Ucx)$ with $L=1$ and right-hand-side dependence only is in $\NP$ has an interesting implication in the realm of bilevel optimization (known to be related to decision-dependent robust optimization, see for instance~\cite{OJMO_2025__6__A2_0}). While bilevel linear optimization is known to be $\NP$-complete~\cite{Buchheim23}, we provide below a different proof for the fact that it is contained in $\NP$, based on a reduction from bilevel linear problems to $\static(\Xc,\Ucx)$ with $L=1$ and right-hand side decision-dependence. We start by recalling the formulation of the bilevel linear problem from~\cite{Buchheim23}:
\begin{equation}
    \label{eq:bilevel}
\min\set{\check c^\top \check x + \check d^\top \check y}{\check A \check x + \check B\check y = \check a, \check x\geq 0, \check y\in\argmin\set{\check q^\top \bar y}{\check T \check x + \check W \bar y = \check h, \bar y \geq 0}},
\end{equation}
where all variables and parameters are marked with $\check \bullet$ to distinguish them from the notation used so far, $\check x\in\R^\ncx$ and $\check y\in\R^\ncy$ are the variables of the leader and of the follower, respectively, while upper case $\check A$, $\check B$, $\check T$, $\check W$ denote matrices of parameters, and $\check c$, $\check d$, $\check a$, $\check h$ denote vectors of parameters, all of conformal dimensions. Observe that~\eqref{eq:bilevel} is an optimistic bilevel problem because the follower's variables, $\check y$, are ultimately decided by the leader, who will then select a most favorable one among the follower's optimal solutions. Given a rational threshold value $V$, the decision version of~\eqref{eq:bilevel} can be written as:
\begin{equation}
    \label{eq:bileveldec}\tag{\bilevel}
    \exists (\check x,\check y)\in \R_+^{\ncx+\ncy} : \check A \check x + \check B\check y = \check a, \check x\geq 0, \check y\in\argmin\set{\check q^\top \bar y}{\check T \check x + \check W \bar y = \check h, \bar y \geq 0},\check c^\top \check x + \check d^\top \check y\leq V
\end{equation}
\begin{theorem}
    \bilevel is in $\NP$.
\end{theorem}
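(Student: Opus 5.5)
The plan is to reduce \bilevel in polynomial time to $\boundedstatic(\Xc,\Ucx)$ with $L=1$ and decision-dependence only in the right-hand side. The discussion preceding the statement places that problem in $\NP$ through the lifted set $\Xstaticb_{\textup{lift}}$ and Theorem~\ref{thm:QMIPsingle}. Alternatively, I would map the instance directly to a single-quadratic-constraint program and invoke Theorem~\ref{thm:QMIPsingle} for \QMIPsingle.

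The first step rewrites the follower's optimality condition $\check y\in\argmin\{\check q^\top \bar y : \check W \bar y = \check h - \check T\check x, \bar y\geq 0\}$ as a robust constraint with decision-dependent uncertainty:
\[
\check q^\top \check y \leq \check q^\top \bar y \quad \forall \bar y\in \U(\check x)\equiv\set{\bar y\in\R_+^{\ncy}}{\check W\bar y = \check h-\check T\check x},
\]
together with the primal feasibility of $\check y$. Here $\U(\check x)$ depends on $\check x$ only through its right-hand side. To put this into the form $u^\top H x\leq 0$, I take $x=(\check x,\check y,1)$ and $u=(\bar y,1)$, so that the constraint reads $\check q^\top\check y\cdot 1 - \check q^\top \bar y\cdot 1\leq 0$, which is bilinear in $(u,x)$. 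The upper-level constraints $\check A\check x+\check B\check y=\check a$, the follower's feasibility $\check T\check x+\check W\check y=\check h$, the nonnegativity constraints and the threshold $\check c^\top\check x+\check d^\top\check y\leq V$ all go into the linear description of $\Xc$. Dualizing as in~\eqref{eq:staticdual} yields $\alpha^\top B\geq Hx$ and $\alpha^\top b(x)\leq 0$. The only quadratic constraint is the one in which the dual multipliers of the equality $\check W\bar y=\check h-\check T\check x$ multiply $\check T\check x$. It is exactly the strong-duality inequality $\check q^\top\check y\leq \pi^\top(\check h-\check T\check x)$ with $\pi^\top\check W\leq \check q^\top$. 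I will check directly that this single-bilinear-constraint system is feasible if and only if the bilevel instance is a YES-instance. The follower problem has a feasible point $\check y$, so weak duality makes the inequality equivalent to optimality of $\check y$.

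The main obstacle is that Theorem~\ref{thm:QMIPsingle} is stated for the bounded variant as well as for \QMIPsingle. Since \QMIPsingle is already unbounded and in $\NP$, I apply it directly to $\mathcal{Z}$ given by the linear constraints in $(\check x,\check y,\pi)$ plus the one quadratic inequality. Before doing so I homogenize with an auxiliary coordinate $z_{\nzt}=1$, as explained after~\eqref{eq:boundedQMIPsingle}. Equality rows and free variables $\pi$ are handled in the standard way: equalities become pairs of inequalities, and each free variable is split as $\pi=\pi^+-\pi^-$, which keeps the constraint a single quadratic one.

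The remaining care point is that the equivalence survives these manipulations. In particular, unboundedness of the follower problem must not create spurious YES answers. Unboundedness forces dual infeasibility, so no $\pi$ exists and the answer is correctly NO. The reduction is clearly polynomial, so \bilevel is in $\NP$.
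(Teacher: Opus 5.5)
Your proposal is correct and is essentially the paper's argument: you recast the follower's optimality as a static robust constraint over the right-hand-side decision-dependent set $\set{\bar y\in\R_+^{\ncy}}{\check T\check x+\check W\bar y=\check h}$, and after dualization you obtain a single bilinear constraint covered by Theorem~\ref{thm:QMIPsingle}. You are right to go through the unbounded \QMIPsingle rather than the \boundedstatic variant you first mention, since a \bilevel instance provides no radius $R$. Do keep the constraint $\check T\check x+\check W\check y=\check h$ in $\Xc$, which the paper's reformulation~\eqref{eq:bilevelreformulated} does not write out: it puts $\check y$ itself in $\U(\check x,\check y)$, so this set is nonempty, the universal quantifier cannot hold vacuously, and the duality step is sound.
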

\begin{proof}
Let us redefine the constraints $\check A \check x + \check B\check y = \check a$ so it contains $\check c^\top \check x + \check d^\top \check y\leq V$ (adding the corresponding slack variable as a leader's variable), and reformulate the optimality condition on $\check y$ through an additional quantifier and a linear inequality:
\begin{equation}
    \label{eq:bilevelreformulated}
\exists (\check x,\check y)\in \set{(\check x,\check y)\in\R_+^{\ncx+\ncy}}{\check A \check x + \check B\check y = \check a}\ \forall \bar y\in\set{\bar y\in\R_+^{\ncy}}
{\check T \check x + \check W \bar y = \check h}:\check q^\top \check y \leq \check q^\top \bar y
\end{equation}
Using the correspondence $x=(\check x,\check y)$ and $u=\bar y$, we see that~\eqref{eq:bilevelreformulated} is indeed of the form of $\static(\Xc,\Ucx)$, for the decision-dependent uncertainty set 
$$
\U(\check x,\check y)=\set{\bar y\in\R_+^{\ncy}}{\check T \check x + \check W \bar y = \check h},
$$
for which the decision-dependence lies in the right-hand side $\check h -\check T \check x$.
\end{proof}
The connection between bilevel and robust optimization used in the latter proof was also observed in \cite{OJMO_2025__6__A2_0}. We now turn to discrete uncertainty sets, and consider first the case of decision-independent uncertainty. While the reformulation of~\eqref{eq:static_nonlinear} is no longer valid here because $\U$ is discrete, we can rely on the original definition~\eqref{eq:Xstatic} and turn it finite by considering the extreme points and extreme rays of $\conv(\U)$:
\begin{align}
\Xstatic &=
\set{x\in\X}{
\begin{aligned}
&u^\top H_\ell x \leq 0,\; \forall \ell\in[L], u\in\U
\end{aligned}
}\\
&=
\set{x\in\X}{
\begin{aligned}
&u^\top H_\ell x \leq 0,\; \forall \ell\in[L],u\in\ext(\conv(\U))\cup\ray(\conv(\U)) \\
\end{aligned}
},\label{eq:staticreformulatedtextreme}
\end{align}
where we know from Fact~\ref{fact:inthull} that all vertices and rays of $\conv(\U)$ have polynomially bounded encoding lengths. We leverage next reformulation~\eqref{eq:staticreformulatedtextreme} to show that $\static(\Xc,\Ud)$ is $\coNP$-complete, and then, that $\static(\Xd,\Ud)\in\Sigma_2^p$.

\begin{theorem}
    \label{thm:static_coNPC}
    $\boundedstatic(\Xc,\Ud)$ and $\static(\Xc,\Ud)$ are $\coNP$-complete.
\end{theorem}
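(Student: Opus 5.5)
We prove membership in $\coNP$ for the unbounded problem $\static(\Xc,\Ud)$, which implies membership for the bounded variant, and hardness for the bounded variant, which implies hardness for the unbounded one.

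\emph{Membership.} We show that the complement is in $\NP$. By reformulation~\eqref{eq:staticreformulatedtextreme}, the set $\Xstatic$ is the polyhedron
\[
\set{x\in\Xc}{u^\top H_\ell x\le 0,\ \forall \ell\in[L],\ u\in V},
\qquad V\equiv\ext(\conv(\U))\cup\ray(\conv(\U)).
\]
By Fact~\ref{fact:inthull}, every element of $V$ has encoding length polynomial in the instance. This gives a polyhedron with exponentially many constraints but polynomial facet complexity.

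Suppose the instance is a NO-instance, so this polyhedron is empty. By Farkas' lemma (or Helly-type reasoning on the infeasible linear system), an infeasible system in $\nxt$ variables has an infeasible subsystem with at most $\nxt+1$ inequalities. Hence there exist at most $\nxt+1$ pairs $(\ell_i,u^i)$ with $u^i\in V$ such that $\set{x\in\Xc}{(u^i)^\top H_{\ell_i}x\le 0,\ \forall i}$ is empty.

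The certificate consists of these pairs. To verify it, we first check that each $u^i$ belongs to $\U$ or is a recession direction of it. Rather than checking extremality, it suffices to check $u^i\in\Ud$, or, for rays, that $u^i$ is an integer-scaled direction in the recession cone $\set{u\ge 0}{Bu\le 0}$. We must then justify that rays may be replaced by points of $\U$. Since $\U$ is nonempty, pick any $\bar u\in\U$; then $\bar u+k r\in\U$ for integer-scaled rays $r$. Because constraints along rays can be expressed through $r^\top H_\ell x\le0$, it is cleaner to include $r$ directly and verify $Br\le 0$, $r\ge0$. Note that $r^\top H_\ell x\le 0$ is implied by robustness over $\U$ whenever $\U\neq\emptyset$. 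If $\U=\emptyset$, the problem reduces to the feasibility of $\Xc$, which is in $\P$, so the verifier can first decide whether $\U$ is nonempty via the certificate itself or handle this case separately.

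The remaining verification is emptiness of a polynomial-size polyhedron, which linear programming decides in polynomial time. The main subtlety is this bookkeeping for rays versus points and emptiness. The essential step is Fact~\ref{fact:inthull} combined with the bound of at most $\nxt+1$ violated constraints.

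\emph{Hardness.} We reduce from a $\coNP$-complete problem, for instance the complement of \textsc{Subset Sum} or of binary integer feasibility. Take $\Xc=\{x_0=1\}$, a singleton, with $L=1$, and encode the universal check in $\Ud$. We choose
\[
\Ud=\set{u\in\{0,1\}^n}{a^\top u= t},
\]
where the bounds $u\le 1$ are included so that the set is bounded. We set the constraint to $u^\top H x = 1\le 0$, which is written homogeneously by adding a coordinate $u_0=1$ to $\U$ and taking $H$ so that $u^\top Hx=u_0x_0$.

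The instance is a YES-instance if and only if $\Ud$ is empty, that is, if and only if \textsc{Subset Sum} has no solution. This gives $\coNP$-hardness already for $\boundedstatic(\Xc,\Ud)$, with $R=\max(1,\cdot)$ chosen accordingly, and hence for the unbounded version.
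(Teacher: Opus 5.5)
Your proposal is correct. The membership half follows the paper, but the hardness half takes a different route.

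\textbf{Membership.} The paper dualizes with Farkas' lemma and takes a basic solution with at most $\nxt$ nonzero multipliers. You instead extract an infeasible subsystem of at most $\nxt+1$ robust constraints. Both rest on Fact~\ref{fact:inthull}. Your version is more careful about verification than the paper's. Membership of $u^i$ in $\ext(\conv(\Ud))\cup\ray(\conv(\Ud))$ cannot be checked efficiently, whereas $u^i\in\Ud$, or $u^i\geq\zero$ with $Bu^i\leq\zero$, can. One phrase needs fixing: the verifier cannot ``handle $\Ud=\emptyset$ separately'', because recognizing $\Ud=\emptyset$ is $\coNP$-hard, which is exactly what your own reduction exploits. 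Instead, whenever a ray is used, the certificate must contain a point $\bar u\in\Ud$ of polynomial size (Fact~\ref{fact:milp}). Such a point exists in every NO-instance with $\Xc\neq\emptyset$.

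\textbf{Hardness.} The paper fixes $x=\one$ and asks whether $\one^\top u\leq V$ for all $u$ in the nonempty clique-feasibility set. There the hardness lies in the adversarial problem $\max_{u\in\Ud}u^\top Hx$. You make every scenario violate $u_0x_0\leq 0$, so the instance is YES iff $\Ud=\emptyset$; this reduces co-\textsc{Subset Sum} through emptiness of the uncertainty set. Your reduction is shorter and valid under the paper's definitions, which never require $\U\neq\emptyset$. However, it is degenerate: it would break under the common convention that uncertainty sets are nonempty, while the paper's reduction would not. To avoid this, take $\Ud=\set{u\in\{0,1\}^n}{a^\top u\leq t}$, which contains $\zero$, with the robust constraint $a^\top u\leq t-1$. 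By integrality this constraint holds for all scenarios iff no subset sums to $t$.
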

\begin{proof}
Let us first prove the membership of the unbounded version so let $I$ be an instance of the problem, which is a YES-instance if and only if $\Xstatic$ is non-empty. In the case of $\X=\Xc$, $\Xstatic$ can be rewritten as
\begin{equation}
    \label{eq:staticreformulatedtextreme_bis}
\Xstatic = \set{x\in\R^{\nxt}_+}{
\begin{aligned}
&Ax\leq a \\
&u^\top H_\ell x \leq 0,\; \forall \ell\in[L],u\in\ext(\conv(\U))\cup\ray(\conv(\U)) \\
\end{aligned}
}
\end{equation}
Applying Farkas' lemma to~\eqref{eq:staticreformulatedtextreme_bis}, with $\W=\ext(\conv(\U))\cup\ray(\conv(\U))$, $\Xstatic$ is non-empty if and only if
$$
a^\top \alpha \geq 0\mbox{ for all }\alpha,\beta:A^\top \alpha + \sum_{\ell\in[L]\atop u\in \W}u^\top H_\ell\beta_{\ell,u} \geq 0,\; \alpha,\beta\geq 0,
$$
which is equivalent to
\begin{equation}
\label{eq:static_coNP}
\min\set{a^\top \alpha}{A^\top \alpha + \sum_{\ell\in[L]\atop u\in \W}u^\top H_\ell\beta_{\ell,u} \geq 0,\; \alpha,\beta\geq 0}\geq 0.
\end{equation}
From the above equivalences, $I$ is a NO-instance to $\static(\Xc,\Ud)$ if and only if there exists a (basic) solution to the linear program in~\eqref{eq:static_coNP} having a negative objective value. 
Since the linear program in~\eqref{eq:static_coNP} has only $\nxt$ constraints, any basic solution $(\alpha^*,\beta^*)$ contains at most $\nxt$ non-zero variables, so at most $\nxt$ components of $\beta^*$ may be positive. Moreover, since each $u\in\W$ has a polynomial encoding length, the value of each of these positive components has also a polynomial encoding length. Hence, if $I$ is a NO-instance, there exists a polynomial certificate of non-negative values $\alpha^*,\beta_{\ell_1,u_1}^*,\ldots,\beta_{\ell_{\nxt},u_{\nxt}}^*$ (all other $\beta_{u}^*$ being zero) such that $A^\top \alpha^* + \sum_{k\in[\nxt]}u_k^\top H_{\ell_k}\beta^*_{\ell_k,u_k}$ and $a^\top \alpha^* <0.$

To prove the hardness of the bounded version, consider the problem with $L=1$, and auxiliary variables $x_{\nxt}=1$ and $u_{\nut}=-V$,
\begin{align*}
&\exists x\in\{\one^{\nxt}\}\ \forall u\in\U'\times\{-V\}: u^\top H x \leq 0,\\
\Leftrightarrow\quad&\exists x\in\{\one^{\nxt}\}\ \forall u\in\U': u^\top H x \leq V
\end{align*}
the complement of which is
$$
\forall x\in\{\one^{\nxt}\}\ \exists u\in\U': u^\top H x > V,
$$
which can be simplified to
\begin{equation}
    \label{eq:co_static}
\exists u\in\U': u^\top \one^{\nxt-1} > V.
\end{equation}
Problem~\eqref{eq:co_static} is easily seen to be $\NP$-hard by considering $\U'$ as the feasibility set of the maximum clique problem, providing a reduction from the latter to the complement of $\boundedstatic(\Xc,\Ud)$, thereby proving its $\coNP$-hardness.
\end{proof}

Whenever decision variables are discrete, the $\Sigma_2^p$-hardness of the bounded problem is well-known~\cite{claus2020note}. The latter paper however considers binary variables and uncertain parameters, thereby leaving open the membership in $\Sigma_2^p$ in the case these are allowed to take integer values. We fill this small gap by leveraging again reformulation~\eqref{eq:staticreformulatedtextreme} and relying on folklore results on the integer hull of polyhedra.

\begin{theorem}
    \label{thm:XcXdSigma}
$\static(\Xd,\Ud)$ is in $\Sigma_2^p$.
\end{theorem}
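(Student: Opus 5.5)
We need to show $\static(\Xd,\Ud)$ is in $\Sigma_2^p$ by exhibiting the form $\exists \hat x\ \forall \hat u: P$ with polynomially bounded certificates on both levels. We start from reformulation~\eqref{eq:staticreformulatedtextreme}, which writes $\Xstatic$ as the set of $x\in\Xd$ satisfying $u^\top H_\ell x\le 0$ for every $\ell\in[L]$ and every $u\in\W\equiv\ext(\conv(\Ud))\cup\ray(\conv(\Ud))$. By Fact~\ref{fact:inthull}, applied to the polyhedron $\Uc$ with all coordinates integer, every element of $\W$ has encoding length bounded by a polynomial $p$ in the facet complexity of $\Uc$, hence in the input size. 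In particular, $\W$ is finite and its elements are of polynomial size.

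\textbf{Inner level.} Fix a candidate $\hat x$ of polynomial size. We claim that $\hat x\in\Xstatic$ if and only if $\hat x\in\Xd$ and $u^\top H_\ell\hat x\le 0$ for all $\ell$ and all integer vectors $u\in\Uc$ or $u\in\ray(\conv(\Ud))$ of encoding length at most $p$.
\begin{itemize}
\item The ``only if'' direction is immediate for points of $\Ud$. For rays it follows because $u_0+\lambda r\in\conv(\Ud)$ for all $\lambda\ge 0$; since $\Ud\subseteq\conv(\Ud)$ and linear inequalities valid on $\Ud$ extend to its convex hull, a positive value of $r^\top H_\ell\hat x$ would violate the constraint for large $\lambda$.
\item The ``if'' direction holds because the constraints on $\W$ already imply all constraints by~\eqref{eq:staticreformulatedtextreme}.
\end{itemize}
To avoid quantifying over $\ray(\conv(\Ud))$ directly, we let the universal block range over pairs $(u,\ell)$ with $u$ an integer vector of length at most $p$. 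The predicate $P$ then checks that either $u\notin\Uc$, or $u^\top H_\ell\hat x\le0$. Rays are handled as follows: the recession cone of $\conv(\Ud)$ equals that of $\Uc$ when $\Ud\neq\emptyset$, a classical fact (Meyer) for rational polyhedra. So $P$ additionally checks that whenever $u$ lies in the recession cone $\{r\ge0: Br\le 0\}$, we have $u^\top H_\ell\hat x\le 0$. Extreme rays can be scaled to integer vectors of polynomial length by Fact~\ref{fact:inthull}. All these checks are polynomial-time linear algebra. If $\Ud=\emptyset$ the instance is trivially YES as soon as $\Xd\neq\emptyset$, and this is consistent with the predicate, since no $u\in\Uc$ is integer and we may drop the ray test in that case. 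To keep $P$ polynomial, we instead guess in the outer block a witness point of $\Ud$ or a flag for emptiness; emptiness of $\Ud$ is a coNP condition that can be absorbed in the universal block.

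\textbf{Outer level.} This is the main obstacle: we must show that if $\Xstatic\neq\emptyset$, it contains an $x$ of polynomial encoding length. Since $\W$ is finite, \eqref{eq:staticreformulatedtextreme} exhibits $\Xstatic$ as $P'\cap\Z^{\nxt}$, where $P'$ is the polyhedron $\{x\in\R^{\nxt}_+:Ax\le a,\ u^\top H_\ell x\le0\ \forall \ell,u\in\W\}$. It may have exponentially many inequalities, but each has encoding length polynomial in the input, because $u$ has length at most $p$ and $H_\ell$ is part of the input. Hence the facet complexity of $P'$ is polynomially bounded, and Fact~\ref{fact:milp} (with $n^c=0$) yields a polynomial-size $\hat x\in\Xstatic$ whenever the set is nonempty. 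The key point, and the step needing care, is that Fact~\ref{fact:milp} depends only on facet complexity and not on the number of inequalities. Combining the two levels gives the required $\Sigma_2^p$ form.
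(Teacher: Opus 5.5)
Your proof is correct and follows the paper's argument. The outer certificate comes from applying Fact~\ref{fact:milp} to the polyhedron cut out by $Ax\le a$ and the constraints $u^\top H_\ell x\le 0$ for $u\in\W$; this polyhedron has polynomial facet complexity by Fact~\ref{fact:inthull}. The universal block ranges over the polynomial-length vectors of $\W$. Your inner level goes further than the paper: you quantify over bounded-length integer points checked against $\Uc$, handle rays via Meyer's recession-cone theorem, and guard the case $\Ud=\emptyset$. This makes explicit a polynomial-time predicate that the paper's short proof leaves implicit.
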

\begin{proof}
Reformulation~\eqref{eq:staticreformulatedtextreme} and Fact~\ref{fact:milp} imply that if $\conv(\Xstatic)$ is non-empty, it contains also a vector with polynomially bounded input length. Furthermore, reformulation~\eqref{eq:staticreformulatedtextreme} also shows that, given $x$, $\forall \ell\in[L],u \in \U:u^\top H_\ell x\leq 0$ if and only if $\forall \ell\in[L],u\in \W:u^\top H_\ell x\leq0$ where $\W=\ext(\conv(\U))\cup\ray(\conv(\U))$, which contains only vectors of polynomially bounded encoding lengths by Fact~\ref{fact:inthull}.
\end{proof}

In the remaining of this section, we consider the two cases arising with discrete decision-dependent uncertainty sets, starting with $\static(\Xd,\Udx)$. Similarly to the continuous uncertainty case, we can assume $\nxt=\nut$ and assume that $B$ and $b$ are such that the constraints $B(x)u\leq b$ become $x=u$ (using auxiliary variables $x_\nxt=1$ and $u_\nut=1$), leading to a decision problem of the form
\begin{equation}
    \label{eq:reformulationDDROquad}
\exists x\in\X:x^\top H_\ell x \leq 0,\;\forall \ell\in[L]?
\end{equation}
Problem~\QIP reduces to~\eqref{eq:reformulationDDROquad} for appropriate choices of $\Xd$, and $H_\ell$, so Theorem~\ref{thm:QIP} leads to undecidability in this case again.
\begin{corollary}
    \label{cor:XUdx}
$\static(\Xd,\Udx)$ is undecidable.
\end{corollary}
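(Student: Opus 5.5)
The statement to prove is Corollary~\ref{cor:XUdx}: $\static(\Xd,\Udx)$ is undecidable. The plan is a many-one reduction from \QIP, which is undecidable by Theorem~\ref{thm:QIP}, to $\static(\Xd,\Udx)$. The reduction uses the decision-dependence of $\Udx$ so that the uncertainty set becomes the singleton $\{x\}$. The robust constraints $u^\top H_\ell x\le 0$ then collapse to the quadratic constraints $x^\top H_\ell x\le 0$, as in~\eqref{eq:reformulationDDROquad}.

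\textbf{Construction.} Take an instance of \QIP with polyhedron $\mathcal Z=\{z: Dz\le d\}$ and symmetric matrices $Q_\ell$, $\ell\in[L]$.

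First, the sign restriction: the sets $\Xc$ and $\Ucx$ are defined in the nonnegative orthant, whereas \QIP allows $z\in\Z^{n_z}$. I split $z=z^+-z^-$ with $z^\pm\in\Z_+^{n_z}$ and rewrite each $z^\top Q_\ell z$ as a quadratic form in $(z^+,z^-)$. Any integer $z$ has such a representation, and any nonnegative integer pair yields an integer $z$, so nothing is lost.

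Next, the homogenizing coordinate. I append a coordinate fixed to $1$ through $Ax\le a$, so that the linear constraints of $\mathcal Z$ become part of $\Xd=\{x\in\Z^{\nxt}_+: Ax\le a\}$ and any affine terms are absorbed.

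Set $\nut=\nxt$. Define $\Udx=\{u\in\Z^{\nut}_+: u_i-x_i\le 0,\ x_i-u_i\le 0,\ i\in[\nxt]\}$. This has the form $B(x)u\le b$ with $B$ linear in $x$: each constraint $u_i - x_i\le 0$ is written as $u_i - x_i u_{\nut}\le 0$ using the auxiliary coordinate $u_{\nut}=1$, so that $b=0$, consistent with the paper's conventions. For every $x\in\Xd$, this forces $\Udx=\{x\}$. Finally take $H_\ell$ equal to the lifted $Q_\ell$; symmetry is not needed, since $x^\top H_\ell x$ only depends on the symmetric part.

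\textbf{Correctness.} For every $x\in\Xd$, the robust condition $\forall u\in\Udx: u^\top H_\ell x\le 0$ is equivalent to $x^\top H_\ell x\le 0$. Hence the constructed instance is a YES-instance exactly when the \QIP instance is. The map is clearly computable in polynomial time, so undecidability transfers.

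\textbf{Main obstacle.} The only delicate point is the bilinear term $x_i u_{\nut}$ needed to encode $u=x$. Everything depends on the auxiliary coordinate $u_{\nut}$ actually being pinned to $1$. I would fix it with the rows $u_{\nut}\le 1$ and $-u_{\nut}\le -1$, which are decision-independent and therefore allowed in $B(x)$. Alternatively, I would use the paper's stated convention that affine dependencies can be simulated with $x_{\nxt}=1$.
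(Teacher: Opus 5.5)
Your proposal is correct and is essentially the paper's argument: the paper likewise takes $\nut=\nxt$, uses the auxiliary coordinates $x_{\nxt}=1$ and $u_{\nut}=1$ to encode $u=x$ in $B(x)u\le b$ so that $\Udx=\{x\}$, and thereby reduces \QIP to the problem of deciding whether some $x\in\Xd$ satisfies $x^\top H_\ell x\le 0$ for all $\ell\in[L]$. Your splitting $z=z^+-z^-$ (needed because $\Xd$ lies in the nonnegative orthant) and your explicit pinning of $u_{\nut}$ by constant rows fill in details the paper leaves implicit; note that this pinning uses a nonzero constant $b$, which the model allows, rather than $b=0$ as you first state.
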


While \QIP is undecidable for unbounded variables, its bounded counterpart lies in $\NP$ so the above reduction does not bring additional insights in the complexity of $\boundedstatic(\Xd,\Udx)$.
In fact, when all variables are discrete and bounded, containment in $\Sigma_2^p$ is immediate.
\begin{observation}
    \label{obs:boundedSigma2p}
$\boundedstatic(\Xd,\Udx)$ is in $\Sigma_2^p$.
\end{observation}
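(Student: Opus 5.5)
We prove membership directly from the definition of $\Sigma_2^p$, writing \boundedstatic as $\exists X\ \forall Y : P(I,X,Y)$ with polynomially bounded certificates.

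\textbf{Outer certificate.} For $\boundedstatic(\Xd,\Udx)$, a YES-instance is witnessed by some $x\in\Xd\subseteq\{0,\ldots,R\}^{\nxt}\cap\Z^{\nxt}$. Every integer vector in $[0,R]^{\nxt}$ has encoding length $O(\nxt\log R)$, which is polynomial in the input because $R$ is encoded in binary. The existential block therefore ranges over polynomial-size objects. Checking $x\in\Xd$, namely $Ax\le a$, integrality and $0\le x\le R$, takes polynomial time.

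\textbf{Inner certificate.} The universal block ranges over $u\in\Z^{\nut}\cap[0,R]^{\nut}$, which again has polynomial encoding length. We fold the membership test $u\in\Udx$ into the predicate, so the statement becomes
\[
\exists x\in[0,R]^{\nxt}\cap\Z^{\nxt}\ \forall u\in[0,R]^{\nut}\cap\Z^{\nut},\ \ell\in[L]:\ P(I,x,u,\ell),
\]
where $P$ states: $x\in\Xd$, and either $B(x)u\not\le b$ or $u^\top H_\ell x\le 0$.

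\textbf{Polynomial-time predicate.} $B(x)$ is linear in $x$ with rational coefficients, so computing $B(x)u$ and $u^\top H_\ell x$ involves polynomially many arithmetic operations on numbers of polynomial size. Hence $P$ is decidable in polynomial time.

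\textbf{Equivalence.} It remains to check that this formula holds exactly when the original instance is a YES-instance. The universal condition over all bounded integer $u$ with the implication ``$u\in\Udx\Rightarrow u^\top H_\ell x\le0$'' is precisely the requirement $\forall u\in\Udx$. Membership in $\Sigma_2^p$ follows.

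No real obstacle is expected. The one point needing care is that boundedness together with integrality makes every candidate vector short, so none of the extreme-point arguments (Facts~\ref{fact:milp} and~\ref{fact:inthull}) are needed. This is exactly where the unbounded case breaks down, consistent with Corollary~\ref{cor:XUdx}.
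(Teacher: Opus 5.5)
Your proposal is correct and takes the same approach as the paper, which simply remarks that containment is immediate because bounded integer vectors have polynomial encoding length. Your write-up just makes explicit the details the paper leaves implicit, namely folding the test $u\in\Udx$ into the polynomial-time predicate and quantifying over $\ell\in[L]$ in the universal block.
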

Second, we consider the case of continuous decisions and discrete decision-dependent uncertainty, and prove the containment and the hardness with respect to $\Sigma_2^p$ in the bounded case. Before proving the result, we introduce a small technical lemma which will be used in the following theorem as well as in the next section to simplify the handling of $\Udx$ in the bounded case. We state the lemma for \twostage, which is without loss of generality as \static is the special case of \twostage obtained by imposing $y=\zero^\nyt$ in $\Y(x,d)$.
\begin{lemma}
    \label{lem:Udx}
Let $I$ be an instance of \boundedtwostage, $\Q\subseteq\R^\nxt$ be a polytope such that each of its inequalities has polynomial encoding length in
$\langle I \rangle$, and $x^*\in\Q$. Then, set $\Q^*\equiv\set{x\in \Q}{\Udx=\Ud(x^*)}$ contains a point $\hat{x}$ of polynomial encoding length.
\end{lemma}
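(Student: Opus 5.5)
The plan is to describe $\Q^*$ as a polyhedron defined by linear inequalities of polynomial encoding length, and then invoke Fact~\ref{fact:milp} (with $n^d=0$) to get a point $\hat x$ of polynomial encoding length. Since the instance is bounded, $\Ud(x)=\set{u\in\Z^\nut\cap[0,R]^\nut}{B(x)u\leq b}$ is a subset of the finite grid $G\equiv\{0,\ldots,R\}^\nut$. Every $u\in G$ has encoding length polynomial in $\enc{I}$, although $G$ itself may contain exponentially many points. Split $G$ into $S^*\equiv\Ud(x^*)$ and $G\setminus S^*$. Then $\Ud(x)=S^*$ holds if and only if two conditions are met: (i) $B(x)u\leq b$ for every $u\in S^*$, and (ii) for every $u\in G\setminus S^*$ some row $i$ satisfies $(B(x)u)_i>b_i$.

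Because $B$ is linear in $x$, condition (i) consists of linear inequalities in $x$. Each has encoding length polynomial in $\enc{I}$, since $u$ is a bounded integer vector. Condition (ii) is a disjunction and is not convex, so I will not use $\Q^*$ directly. Instead I fix $x^*$ as a guide. For each $u\in G\setminus S^*$, choose an index $i(u)$ with $(B(x^*)u)_{i(u)}>b_{i(u)}$. Now define the set $P$ of $x\in\Q$ satisfying (i) together with $(B(x)u)_{i(u)}>b_{i(u)}$ for all $u\in G\setminus S^*$. By construction $x^*\in P$ and $P\subseteq\Q^*$. $P$ is described by linear inequalities, some strict, each of polynomial encoding length, and only the number of inequalities is exponential. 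Facet complexity depends only on the length of each inequality, not on their count, so $P$ has polynomial facet complexity.

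The main obstacle is the strict inequalities, since Fact~\ref{fact:milp} applies to closed polyhedra. I would first try the following argument. The closure of $P$, written $\bar P$ and obtained by replacing each $>$ with $\geq$, is a polytope of polynomial facet complexity, so its vertices have polynomial encoding length. Consider the face of $\bar P$ that contains $x^*$ in its relative interior. This face is cut out by the inequalities that are tight at $x^*$, and none of those is a strict one, since $x^*$ satisfies all strict inequalities strictly. Take the barycenter $\hat x$ of at most $\nxt+1$ affinely independent vertices of this face whose convex hull has the same dimension as the face. Then $\hat x$ lies in the relative interior of the face, and so it satisfies every inequality that is not tight on the whole face strictly. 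Every strict inequality of $P$ is non-tight at $x^*$, and therefore not tight on the whole face, so $\hat x$ satisfies it strictly and $\hat x\in P\subseteq\Q^*$. The encoding length of $\hat x$ is polynomial because it is an average of polynomially many vertices, each of polynomial size.

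If this face argument needs adjustment, the fallback is the strengthening trick. Replace $(B(x)u)_i>b_i$ by $(B(x)u)_i\geq b_i+\varepsilon$ with $\varepsilon=2^{-p(\enc{I})}$, and argue from the polynomial bounds on the vertices of $\bar P$ that some point of polynomial size satisfies the strengthened system.
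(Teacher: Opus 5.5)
Your proposal is correct and follows essentially the same route as the paper: you use $x^*$ to fix one violated row for each excluded grid point $u$, pass to the closure (a polytope of polynomial facet complexity), note that the strict inequalities cannot be implicit equalities because $x^*$ satisfies them strictly, and take the barycenter of affinely independent vertices to land in a relative interior. The differences are cosmetic: you work in the minimal face of the closure containing $x^*$ rather than in the whole closure (both work), you correctly describe the fixed-row set as a subset of $\Q^*$ containing $x^*$ where the paper writes an equality, and the $\varepsilon$-strengthening fallback is not needed.
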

\begin{proof}
From the definition of $\Q^*$, we have
\begin{equation}
\label{eq:varphistaropen}
\Q^*=\set{x\in\Q}{B(x)u\leq b,\;\forall u \in \Ud(x^*),\; (B(x)u)_{\phi^*(u)}> b_{\phi^*(u)},\;\forall u \in \barUd(x^*)},
\end{equation}
where $\barUd(x^*)=\set{u\in\Z^\nut_+}{u\leq R}\setminus\Ud(x^*)$ denotes the complement of $\Ud(x^*)$ with respect to the set of integers bounded by $R$, and $\phi^*$ is a function from $\barUd(x^*)$ to $[\muu]$.
Consider the closure of~\eqref{eq:varphistaropen}
\begin{equation}
\label{eq:varphistarclosed}
\set{x\in\Q}{B(x)u\leq b,\;\forall u \in \Ud(x^*),\; (B(x)u)_{\phi^*(u)}\geq b_{\phi^*(u)},\;\forall u \in \barUd(x^*)}.
\end{equation}
Set~\eqref{eq:varphistarclosed} is a polytope, and let us call its dimension $N\leq\nxt$. Observe that inequalities $(B(x)u)_{\phi^*(u)}\geq b_{\phi^*(u)},\;\forall u \in \barUd(x^*)$ cannot be implied equalities of the polytope as $x^*$ satisfies these inequalities strictly. Therefore, any point in the relative interior of the polytope satisfies these inequalities strictly as well. We construct the desired point $\hat{x}$ of polynomial encoding length in the relative interior by taking the centroid of $N+1$ affinely independent vertices.
\end{proof}

\begin{theorem}
    \label{thm:XcUdx}
$\boundedstatic(\Xc,\Udx)$ is $\Sigma_2^p$-complete.
\end{theorem}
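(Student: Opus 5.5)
We prove membership and hardness separately. Membership of $\boundedstatic(\Xc,\Udx)$ in $\Sigma_2^p$ amounts to producing a polynomial-size outer certificate $\hat x$. After that, checking $\forall u\in\Ud(\hat x)$ is a $\coNP$ task, because a violating $u$ is integer and bounded by $R$ and therefore has polynomial encoding length.

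\emph{Membership.} Let $x^*\in\Xc$ be a YES-witness. We apply Lemma~\ref{lem:Udx}, taking $\Q$ to be $\Xc$ intersected with the robust constraints induced by the fixed set $\Ud(x^*)$:
\begin{equation*}
\Q\equiv\set{x\in\Xc}{u^\top H_\ell x\leq 0,\;\forall \ell\in[L],\,u\in\Ud(x^*)}.
\end{equation*}
Since $\Ud(x^*)$ consists of integer vectors in $[0,R]^\nut$, each inequality of $\Q$ has encoding length polynomial in $\enc{I}$. Note that $\Q$ is a polytope even though it may have exponentially many inequalities; the lemma only requires each inequality to be short. Moreover $x^*\in\Q$. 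Lemma~\ref{lem:Udx} then yields $\hat x\in\Q$ of polynomial encoding length with $\Ud(\hat x)=\Ud(x^*)$. Because $\hat x\in\Q$, it satisfies $u^\top H_\ell \hat x\leq 0$ for all $u\in\Ud(\hat x)$, so $\hat x$ is a valid short witness. Given $\hat x$, the complement $\exists u\in\Ud(\hat x),\ell: u^\top H_\ell\hat x>0$ is in $\NP$, since $u$ is bounded and integral and verification is polynomial. This gives $\Sigma_2^p$ membership.

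\emph{Hardness.} We reduce from $\boundedstatic(\Xd,\Ud)$, which is $\Sigma_2^p$-hard even with binary variables~\cite{claus2020note}. The idea is to use the decision-dependence to enforce integrality of $x$. Take $x\in[0,1]^\nxt$ continuous, and add integer uncertain variables together with coupling constraints in $B(x)u\leq b$ that make any fractional $x$ produce a violated robust constraint. Concretely, add an integer scenario component $w\in\{0,1\}^\nxt$ and an indicator $s\in\{0,1\}$ with constraints $s\leq x_i + (1-w_i)$ and $s\leq (1-x_i)+w_i$, both linear in $(x,u)$ with products $x_i\cdot$const. We would like this to force: for fractional $x_i$, choosing $w_i$ yields a feasible $u$ with $s=1$, but for binary $x$ no such $s=1$ exists. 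However, these inequalities allow $s=1$ even for binary $x$ (choose $w_i=x_i$), so $w_i$ must be tied more tightly, or the quantity $s$ must encode strict fractionality. A cleaner gadget uses an integer $z_i\in\{0,1\}$ with constraints $z_i\leq 2x_i$ and $z_i\geq 2x_i-1$. These are feasible for all $x_i\in[0,1]$, but fail to force anything when $x_i$ is binary.

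The alternative I would actually pursue is the one used in the second-stage setting: exploit that the original uncertainty $u$ in the \static instance interacts with $x$ bilinearly in $u^\top H_\ell x$. Since $x\mapsto\max_{u}u^\top H x$ is convex, a fractional $x$ is never better than some vertex only if the feasible region is structured suitably. So I would encode each binary $x_i$ as the selection of an uncertainty subset. Let $\Ud(x)$ contain the original scenarios $u$ together with penalty scenarios $v$ that are feasible iff $x_i\in(0,1)$, made detectable with integer $v_i$ via $v_i\leq 2x_i$, $v_i\leq 2-2x_i$, i.e.\ $v_i=1$ feasible iff $x_i\in[1/2,1/2]$. Rounding issues (only $x_i=1/2$ is caught) are then handled by scaling $x_i\in\{0,1\}$ to a grid where all non-binary values are caught. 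Designing this penalty gadget so that exactly the binary $x$ survive is the main obstacle, and I expect to adapt the construction from~\cite{claus2020note} rather than invent one from scratch.
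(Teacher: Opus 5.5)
Your membership argument is correct and is the paper's own: apply Lemma~\ref{lem:Udx} to $\Q=\set{x\in\Xc}{u^\top H_\ell x\leq 0,\ \forall \ell\in[L],\,u\in\Ud(x^*)}$, then check the universal block in $\coNP$, since scenarios are bounded integers. The hardness half, however, contains no reduction. Each gadget you write down fails, as you partly note yourself, and the last one cannot be fixed by ``scaling to a grid''. Suppose a penalty scenario $v$ causes a violation merely by being present, with a contribution positive regardless of $x$. Then the set of first-stage points it eliminates is $\set{x}{B(x)v\leq b}$, which is closed. There are only finitely many bounded integer scenarios, so the union of these eliminated sets is closed. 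If it contains every fractional point of $\Xc$, it also contains the binary points of $\Xc$, which are limits of fractional ones. So ``exactly the binary $x$ survive'' is unattainable with presence-only penalties. Deferring to~\cite{claus2020note} does not help either, since that note treats binary decisions with decision-independent uncertainty and has no such gadget.

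One of two missing ideas is needed. The paper does not force integrality at all. It reduces from the $\Sigma_2^p$-hard bilevel knapsack interdiction problem with $\X=\set{x\in[0,1]^n}{a^\top x\leq A}$, $\U(x)=\set{u\in\{0,1\}^n}{b^\top u\leq B,\ x+u\leq(2-\epsilon)\one^n}$, $\epsilon=\frac{1}{2A}$, and $H_1$ encoding $b^\top u\leq V$. For binary $u$ the coupling only detects whether $x_i>1-\epsilon$. Hence a fractional certificate can be rounded down below that threshold and up above it without changing $\U(x)$. Feasibility is preserved because $a^\top\lceil x\rceil<A/(1-\epsilon)<A+1$ and the data are integral. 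Alternatively, your own plan can be rescued by making the penalty amount depend on $x$ and vanish at $0$ and $1$. Add a scenario mode $t=1$ that forces the original $u$ to $\zero$, relaxing $Bu\leq b$ by a big-$M$ term. In that mode allow binary $g^0_i,g^1_i\leq t$ with $(2x_i-1)g^0_i\leq 0$ and $(1-2x_i)g^1_i\leq 0$, which are affine in $x$ via the auxiliary coordinate. Then add $\sum_i\bigl(g^0_ix_i+g^1_i(1-x_i)\bigr)$ to one robust constraint. Every $x_i\in(0,\frac12]$ is then cut off by $g^0_i$ and every $x_i\in[\frac12,1)$ by $g^1_i$, while binary $x$ are unaffected. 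This gives a direct reduction from binary $\boundedstatic(\Xd,\Ud)$.
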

\begin{proof}
Let us start with the membership and assume that $I$ is a YES-instance and let $x^*$ be the corresponding certificate for the outer quantifier. We show next how to construct another certificate $\hat{x}$ having polynomial encoding length. Let us introduce the variant of $\Xstatic_*$ considering uncertainty set $\U(x^*)$ instead of the decision-dependent uncertainty set $\U(x)$, namely
$$
\Xstatic_* \equiv \set{x\in \X}{u^\top H_\ell x\leq 0,\; \forall\ell\in[L], u\in \U(x^*)}.
$$
We further consider the subset of $\Xstatic_*$ for which, additionally, $\U(x)=\U(x^*)$. We see that $x^*\in\set{x\in \Xstatic_*}{\U(x)=\U(x^*)}$, so applying Lemma~\ref{lem:Udx} to $\Q=\Xstatic_*$ yields the desired $\hat{x}$. To conclude the proof of the membership, we observe that by the boundedness assumption, any $u\in\Udx$ has polynomial encoding length.

We proceed with the hardness and consider a reduction from the following bilevel knapsack problem, known to be $\Sigma_2^p$-hard~\cite{caprara2013complexity}. Given $a,b\in\Z_+^n$, $A,B\in\Z_+$ (we assume $A>0$ w.l.o.g.), and a threshold $V\in\Z_+$, the decision version of the bilevel knapsack problem asks whether:
\begin{equation}
    \label{eq:bilevelknapsack}
    \exists X\in\set{X'\in\{0,1\}^n}{a^\top X' \leq A}\ \forall Y\in \set{Y'\in\{0,1\}^n}{b^\top Y'\leq B, X+Y' \leq \one^n}: b^\top Y \leq V.
\end{equation} 
The reduction defines the instance $I'$ corresponding to $I$ through sets $\X=\set{x\in[0,1]^n}{a^\top x \leq A}$,
$$
\U(x)=\set{u\in\{0,1\}^{\nut}}{b^\top u \leq B, x+u\leq (2 - \epsilon)\one^n},
$$
and $\epsilon = \frac{1}{2A}$. Then, we set $L=1$ and define $H_{1}=
\bigl(\begin{smallmatrix}
0 & b \\
0 & -V
\end{smallmatrix}\bigr)$, so
$$
\begin{pmatrix}u \\ 1 \end{pmatrix}^\top H_1 \begin{pmatrix}x \\ 1\end{pmatrix}\leq 0
\Leftrightarrow
b^\top u \leq V.
$$ 

Suppose first that $I$ is a YES-instance. Then, let $X$ be the certificate to the outer quantifier. Setting $x=X$, the binarity of $x$ (and $u$) implies that $x+u\leq 2-\epsilon\Leftrightarrow x+u\leq 1$, so the projection of $\U(x)$ on variables $u$ is equal to $\set{Y'\in\{0,1\}^n}{b^\top Y'\leq B, X+Y' \leq 1}$, so the satisfaction of $b^\top Y \leq V$ for all $Y$ in the latter set implies the satisfaction of $b^\top u\leq V$ for all $u$ in the aforementioned projection.

Consider now a YES-instance $I'$ and let $x$ be the certificate with the smallest number of fractional coefficients. If $x$ is binary, defining $X=x$ provides a YES certificate for $I$ in the same line as before. If $x$ has a fractional component $k$ such that $x_k\leq 1-\epsilon$, we can round this component to 0, obtaining a new certificate with less fractional components, yielding a contradiction. Thus, each fractional component of $x$ is larger than $1-\epsilon$ so we can define $\delta=\min\set{x_i}{i: x_i > 1-\epsilon}$ as the minimum fractional value of $x$. We round down $x$ to $\hat{x}$ by defining $\hat{x}_i=x_i$ if $x_i\in\{0,1\}$ and $\hat{x}_i=\delta$ otherwise. Observe that $\U(x)=\U(\hat{x})$, and that $a^\top \hat{x}\leq A$, so $\hat{x}$ is a YES certificate if and only if $x$ is. Consider now the rounded-up solution $\hat{x}'=\lceil \hat{x} \rceil$ and observe that $\U(\hat{x}')=\U(\hat{x})$ so that if $a^\top \hat{x}'\leq A$, it is a YES certificate, providing a contradiction. Thus, $a^\top \hat{x}'\geq A+1$. Then, $a^\top \hat{x} > (1-\epsilon) a^\top \hat{x}'\geq (1-\epsilon)(A+1)> A.$
\end{proof}

We conclude this section by mentioning that the completeness of $\static(\Xc,\Udx)$ for $\Sigma_2^p$ is unknown for the unbounded case.
\begin{question}
Is $\static(\Xc,\Udx)$ in $\Sigma_2^p$?
\end{question}

\section{Complexity of \twostage}\label{sec:twostage}

We consider next the complexity of \twostage and \boundedtwostage summarized in Table~\ref{tab:twostage}. Observe first that any instance of \static reduces to an instance of \twostage by defining $\Y(x,u)=\set{y=0}{u^\top H_\ell x\leq 0,\;\forall \ell\in[L]}$.
\begin{observation}\label{obs:staticleqtwostage}
$\static(\X,\U)\leq\twostage(\X,\U,\Y)$ for each $\X\in\{\Xc,\Xd\}$, $\U\in\{\Uc,\Ud,\Ucx,\Udx\}$ and $\Y\in\{\Yc,\Yd\}$.
\end{observation}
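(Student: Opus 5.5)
The plan is a direct many-one reduction that maps each instance of $\static(\X,\U)$ to an instance of $\twostage(\X,\U,\Y)$ with the same $\X$, the same (possibly decision-dependent) $\U$, and a recourse set that is trivial except for encoding the static constraints. Given $(\X,\U,H_1,\ldots,H_L)$, define
\[
\Y(x,u)=\set{y\in\R_+^{\nyt}}{y\leq \zero^{\nyt},\; u^\top H_\ell x\leq 0,\ \forall \ell\in[L]}
\]
with, for instance, $\nyt=1$. Then $\Y(x,u)$ is nonempty exactly when $u^\top H_\ell x\leq 0$ for all $\ell\in[L]$, and in that case it equals $\{\zero\}$. Consequently $\exists x\in\X\ \forall u\in\U(x)\ \exists y\in\Y(x,u)$ holds if and only if $\exists x\in\X\ \forall u\in\U(x): u^\top H_\ell x\leq 0$ for all $\ell$. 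Because $\Y$ is only required to be nonempty, this equivalence holds pointwise in $(x,u)$ and needs no further argument.

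The step that needs care is checking that $\Y(x,u)$ really has the prescribed form $\set{y\in\R_+^{\nyt}}{T(u)x+Wy\leq 0}$ with $T$ linear in $u$. I would build $T(u)$ row by row: for each $\ell\in[L]$ the row $T_\ell(u)=u^\top H_\ell$ is linear in $u$ and gives the constraint $u^\top H_\ell x\leq 0$, with the corresponding row of $W$ set to zero. One extra row, with $T$-row zero and $W$-row $1$, enforces $y\leq 0$. Only $T$ depends on $u$, and it does so linearly, as the definition of $\Yc$ requires, so the reduction is polynomial: its size is $O(L\cdot\nut\nxt)$ entries copied from the $H_\ell$.

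For the discrete case $\Yd=\Yc\cap\Z^{\nyt}$, the same construction works, since $\{\zero\}$ contains an integer point. The sets $\X$ and $\U(x)$ are left untouched, so continuity, discreteness and decision-dependence are all preserved.

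For the bounded variants, $\Y$ lies in $[0,R]^{\nyt}$ for any $R\geq 0$, so the radius $R$ of the static instance can be kept. The reduction therefore also gives $\boundedstatic(\X,\U)\leq\boundedtwostage(\X,\U,\Y)$.
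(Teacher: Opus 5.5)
Your proposal is correct and is essentially the paper's own argument. The paper likewise keeps $\X$ and $\U(x)$ unchanged and sets $\Y(x,u)=\{y=0 \mid u^\top H_\ell x\leq 0,\ \forall \ell\in[L]\}$, so the recourse set is nonempty exactly when the static constraints hold; your check that the rows $u^\top H_\ell$ give a $T(u)$ linear in $u$, and your remarks on integrality and boundedness, spell out details the paper leaves implicit.
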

Next, we start our study with continuous recourse and decision-independent uncertainty. Introducing the polyhedral cone $\Pi\equiv \set{\pi\in\R^\my_+}{W^\top\pi \geq 0}$, we can formulate the following counterpart of $\Xstatic$ in the two-stage setting.
\begin{lemma}
    \label{lem:Xtwo}
    If $\Y=\Yc$, a vector $x$ is a certificate for the outer existential quantifier if and only if $x\in\Xtwostage$ where
   $$
   \Xtwostage\equiv\set{x\in\X}{
    (T(u)x)^\top \pi \leq 0,\;\forall w=(u,\pi)\in\W
   }
   $$
where
$
\W=(\ext(\conv(\U))\cup\ray(\conv(\U)))\times(\ext(\Pi)\cup\ray(\Pi)).
$ 
\end{lemma}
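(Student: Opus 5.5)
Fix $x\in\X$. We must show that $\Yc(x,u)\neq\emptyset$ holds for every $u\in\U$ if and only if $(T(u)x)^\top\pi\leq 0$ holds for every $(u,\pi)\in\W$. The plan is to apply Farkas' lemma to the recourse set, which produces a condition bilinear in $(u,\pi)$. We then reduce the two universal quantifiers to the extreme points and rays of $\conv(\U)$ and of $\Pi$, using bilinearity and Minkowski--Weyl. Throughout, $\U$ stands for the decision-independent set $\Uc$ or $\Ud$, since the lemma concerns decision-independent uncertainty.

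\textbf{Step 1 (Farkas).} For fixed $x,u$, the set $\Yc(x,u)=\set{y\in\R^\nyt_+}{Wy\leq -T(u)x}$ is non-empty if and only if there is no $\pi\in\R^\my_+$ with $W^\top\pi\geq 0$ and $(-T(u)x)^\top\pi<0$. This is the standard Farkas variant for $\{y\geq0: Wy\leq h\}$. Hence $\Yc(x,u)\neq\emptyset$ if and only if $(T(u)x)^\top\pi\leq 0$ for all $\pi\in\Pi$. Since $\Pi$ is a pointed polyhedral cone (it lies in $\R^\my_+$), every $\pi\in\Pi$ is a nonnegative combination of its extreme rays, with $\ext(\Pi)=\{0\}$. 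So it suffices to check $\pi\in\ext(\Pi)\cup\ray(\Pi)$, and $x$ is a certificate if and only if $(T(u)x)^\top\pi\leq 0$ for all $u\in\U$ and all $\pi\in\ext(\Pi)\cup\ray(\Pi)$.

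\textbf{Step 2 (reduction in $u$).} Fix $x$ and $\pi$. Because $T$ is linear in $u$, the map $g(u)=(T(u)x)^\top\pi$ is linear in $u$, so $g\leq 0$ on $\U$ if and only if $g\leq 0$ on $\conv(\U)$.

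\begin{itemize}
\item For $\U=\Ud$, $\conv(\U)$ is a rational polyhedron when non-empty, by Meyer's theorem (implicit in Fact~\ref{fact:inthull}).
\item Since $\U\subseteq\R^\nut_+$, $\conv(\U)$ is pointed, so Minkowski--Weyl writes every point as a convex combination of extreme points plus a conic combination of extreme rays.
\end{itemize}

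Linearity then gives: $g\leq 0$ on $\conv(\U)$ implies $g\leq0$ at extreme points and, letting the ray parameter grow, $g\leq 0$ along rays. Conversely, nonpositivity at extreme points and rays gives nonpositivity everywhere. Combining with Step 1 yields the description of $\Xtwostage$ by the product set $\W$.

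\textbf{Main obstacle.} The delicate step is the treatment of rays and pointedness. We must ensure that both $\conv(\Ud)$ and $\Pi$ are pointed polyhedra, so that the finite generating sets exist; nonnegativity of $u$ and $\pi$ settles this. For discrete unbounded $\U$, we also need $\conv(\U)$ to be polyhedral, which is where rationality of the data is used. One degenerate case must be handled separately: if $\U=\emptyset$, both sides hold trivially.
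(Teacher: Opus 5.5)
Your proof is correct and follows the same route as the paper. Farkas' lemma applied to $\Yc(x,u)$ gives the condition $(T(u)x)^\top\pi\leq 0$ for all $\pi\in\Pi$ and $u\in\U$, which is then reduced to the extreme points and rays of $\conv(\U)$ and $\Pi$; you simply make explicit what the paper compresses into its one-line ``decompose'' step, namely pointedness of both sets (from nonnegativity), polyhedrality of $\conv(\Ud)$ via Meyer's theorem, and the bilinear expansion that justifies restricting to the product set $\W$.
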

\begin{proof}
Given $x\in\X$ and $u\in\U$ and applying Farkas' lemma to the linear system defining $\Y(x,u)$, there exists $y\in\Y(x,u)$ if and only if 
\begin{equation}
    \label{eq:innermostfeas}
(T(u)x)^\top\pi \leq 0, \quad \forall  \pi\in \Pi.
\end{equation}
Then, given $x\in\X$, \twostage asks whether~\eqref{eq:innermostfeas} holds for all $u\in\U$
\begin{equation}
    \label{eq:innermostfeasforallU}
(T(u)x)^\top\pi \leq 0, \quad \forall  \pi\in \Pi, u\in \U.
\end{equation}
We then decompose~\eqref{eq:innermostfeasforallU} in terms of extreme points and rays of the involved sets.
\end{proof}

The reformulation of Lemma~\ref{lem:Xtwo} will be used in the following two results.

\begin{theorem}
    \label{thm:inSigma2p}
    $\twostage(\Xd,\U,\Yc)$ is in $\Sigma_2^p$ for $\U\in\{\Uc,\Ud\}$.
\end{theorem}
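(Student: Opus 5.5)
We prove that $\twostage(\Xd,\U,\Yc)$ is in $\Sigma_2^p$ by writing it as $\exists \hat x\ \forall \hat w : P(I,\hat x,\hat w)$, with both certificates of polynomial encoding length and $P$ decidable in polynomial time. The reformulation of Lemma~\ref{lem:Xtwo} is the starting point. It says that $x$ is a valid outer certificate if and only if $x\in\Xd$ and $(T(u)x)^\top\pi\leq 0$ for all $(u,\pi)\in\W$, where
\[
\W=(\ext(\conv(\U))\cup\ray(\conv(\U)))\times(\ext(\Pi)\cup\ray(\Pi)).
\]
The key feature is that $\W$ does not depend on $x$, because $\U$ is decision-independent and $\Pi$ depends only on $W$.

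\emph{Step 1: elements of $\W$ are small.} For $\U=\Ud$, Fact~\ref{fact:inthull} applied to the polyhedron $\set{u\in\R^\nut_+}{Bu\leq b}$ shows that the extreme points and rays of $\conv(\U)$ have encoding length polynomial in the facet complexity, hence in $\enc{I}$. For $\U=\Uc$, the same conclusion is the classical bound on vertices and rays of a rational polyhedron, which is the special case $n^d=0$ of Fact~\ref{fact:inthull}. The cone $\Pi$ is given by the rational inequalities $\pi\geq 0$ and $W^\top\pi\geq 0$, so its (normalized) extreme rays also have polynomial encoding length; its only extreme point is $\zero$. Hence $\W$ is a finite set of vectors of polynomial encoding length.

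\emph{Step 2: a small outer certificate.} Because $T(u)$ is linear in $u$, for each fixed $w=(u,\pi)\in\W$ the constraint $(T(u)x)^\top\pi\leq 0$ is a linear inequality in $x$. Its coefficients are polynomial in $u$ and $\pi$: $T(u)=\sum_j u_jT_j$ for fixed rational matrices $T_j$, so each coefficient is a sum of products $u_j\pi_i(T_j)_{ik}$ and has polynomial encoding length. Therefore $\Xtwostage$ is the set of integer points of a rational polyhedron, namely $\Xc$ intersected with these inequalities, whose facet complexity is polynomially bounded. This holds even though $\W$ may be exponentially large, since facet complexity only bounds the size of each inequality. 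Fact~\ref{fact:milp} then yields, whenever $\Xtwostage\neq\emptyset$, some $\hat x\in\Xtwostage$ of polynomial encoding length. This step is the main point to check carefully: one must verify that facet complexity, rather than the number of inequalities, is what Fact~\ref{fact:milp} requires.

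\emph{Step 3: verification with a small universal certificate.} Given $\hat x$, the condition is $\forall \hat w\in\W : (T(\hat u)\hat x)^\top\hat\pi\leq 0$, and we quantify universally over strings of polynomial length. The predicate $P$ first checks in polynomial time that $\hat x\in\Xd$. It then checks whether $\hat w$ encodes an element of $\W$: for example, whether $\hat u$ is an integer point (or a point of $\Uc$) that is a vertex of $\conv(\U)$. Testing extremality in the integer hull is not polynomial, so we relax the check: $P$ accepts if $\hat w$ is not an element of $\U\times\Pi$, or if $\hat u$ is not in $\U$ or the corresponding recession cone (for rays), and otherwise tests the inequality. This relaxation is sound for two reasons. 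First, the inequality holding for all $\hat u\in\U$ of polynomial length and all $\hat\pi\in\Pi$ of polynomial length implies it holds on $\W$, since $\W$ consists of such vectors. Second, it holding on $\W$ implies it holds on all of $\U\times\Pi$ by convexity and conic combinations, which gives the converse. Membership of $\hat u$ in $\U$ and of $\hat\pi$ in $\Pi$ are polynomial-time checks, so $P$ is polynomial and the problem is in $\Sigma_2^p$.
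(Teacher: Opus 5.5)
Your proposal is correct and follows the paper's proof: Lemma~\ref{lem:Xtwo}, polynomial encoding length of the elements of $\W$ by Fact~\ref{fact:inthull}, and Fact~\ref{fact:milp} applied to the polyhedron of polynomially bounded facet complexity defined by $\Xc$ and the inequalities indexed by $\W$; your Step~3 makes explicit the polynomial-time predicate that the paper leaves implicit. One small fix is needed in the ray branch of that predicate. It can wrongly reject when $\U=\emptyset$ but $\set{r\geq 0}{Br\leq 0}\neq\{\zero\}$, and for $\U\neq\emptyset$ it silently relies on that cone being the recession cone of $\conv(\U)$. It is cleaner to drop the ray branch, since a violating extreme ray $r$ (scaled to be integral) together with any $v\in\ext(\conv(\U))$ gives a violating point $v+tr\in\U$ of polynomial encoding length for a suitable integer $t\geq 0$.
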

\begin{proof}
From Lemma~\ref{lem:Xtwo}, the problem can be reformulated as $\exists x\in\X\ \forall w=(u,\pi)\in\W:(T(u)x)^\top \pi \leq 0$. It remains to show that we can restrict ourselves to a subset of $\X$ containing only vectors of polynomial input lengths. Observe that the relevant set of certificates for the outer quantifier is given by $\Xtwostage\subseteq\X$. Then, we know by Fact~\ref{fact:inthull} that all vertices and rays of $\conv(\U)$ and $\Pi$ have polynomially bounded encoding length. Hence, Fact~\ref{fact:milp} implies that if $\conv(\Xtwostage)$ is non-empty, it contains also a vector $\hat x$ with polynomially bounded input length. Let $\hat \X\subseteq \X$ be all such vectors. Then, \twostage can be reformulated as
$
\exists x\in\hat \X,\forall w=(u,\pi)\in\W:(T(u)x)^\top \pi \leq 0.
$
\end{proof}

The $\Sigma_2^p$-hardness of $\boundedtwostage(\Xd,\Uc,\Yc)$ has been proved in~\cite{lefebvre2025exact}, and the one of $\boundedtwostage(\Xd,\Ud,\Yc)$ is easily obtained from the hardness of $\static(\Xd,\Ud)$ by assuming $y=\zero^\nyt$ for all $y\in\Y(x,u)$.

\begin{observation}
\label{obs:XdUd2stages}
$\boundedtwostage(\Xd,\Ud,\Yc)$ is $\Sigma_2^p$-hard.
\end{observation}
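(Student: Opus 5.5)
The statement to establish is Observation~\ref{obs:XdUd2stages}: $\boundedtwostage(\Xd,\Ud,\Yc)$ is $\Sigma_2^p$-hard. I would obtain it by chaining two results already available, with no new construction. The source of hardness is $\boundedstatic(\Xd,\Ud)$, which is $\Sigma_2^p$-hard by~\cite{claus2020note} (see Table~\ref{tab:static}). The transfer mechanism is the reduction of Observation~\ref{obs:staticleqtwostage}.

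\textbf{Step 1: the reduction.} Given an instance $(\X,\U,H_1,\ldots,H_L,R)$ of $\boundedstatic(\Xd,\Ud)$, I keep $\X$ and $\U$ unchanged. I then define the recourse set as
\[
\Y(x,u)=\set{y\in\R^{1}_+}{y\leq 0,\ u^\top H_\ell x\leq 0,\ \forall \ell\in[L]}.
\]
For fixed $(x,u)$, this set is either $\{0\}$ or empty. It is nonempty exactly when $u^\top H_\ell x\leq 0$ holds for all $\ell$. Hence the two-stage instance is a YES-instance if and only if the static one is.

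\textbf{Step 2: the format check.} This is the only point needing care. The defining constraints must have the form $T(u)x+Wy\leq 0$, with $T$ linear in $u$ and rational matrices of polynomial size. The constraint $y\leq 0$ is the row $Wy\leq 0$ with $T$-row zero. Each constraint $u^\top H_\ell x\leq 0$ is a row of $T(u)x$, namely the row $u^\top H_\ell$, which is linear in $u$, with $W$-row zero. So the construction fits the $\Yc$ format exactly.

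\textbf{Step 3: boundedness and size.} $\Y$ is contained in $[0,R]$, while $\X$ and $\U$ are untouched, so the bounded version is preserved. The encoding length grows only linearly. This gives a polynomial many-one reduction, and therefore the $\Sigma_2^p$-hardness transfers.

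I expect no real obstacle here. The subtle point is only confirming that the hardness result of~\cite{claus2020note} is stated for bounded binary sets, which are special cases of $\Xd$ and $\Ud$.
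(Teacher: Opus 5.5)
Your proposal is correct and follows the paper's argument. The paper likewise derives the hardness from the $\Sigma_2^p$-hardness of $\boundedstatic(\Xd,\Ud)$~\cite{claus2020note} through the reduction of Observation~\ref{obs:staticleqtwostage}: it forces $y=\zero^{\nyt}$ and places the static constraints $u^\top H_\ell x\leq 0$ as rows of $T(u)x+Wy\leq 0$. Your explicit checks of the $\Yc$ format and of boundedness spell out what the paper leaves implicit.
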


The next results considers the case of continuous first-stage variables. We remark that~\cite{chekuri2007hardness} prove the $\coNP$-hardness of the separation problem 
$$\forall u\in \U(x) \ \exists y \in\Y(x,u)?$$
 and conclude from that hardness and the ellipsoid algorithm that the optimization problem is also $\coNP$-hard. However, the $\coNP$-hardness of the separation problem does not imply the $\coNP$-hardness of the optimization problem for the many-one reduction; in fact, the $\coNP$-hardness of the separation problem only proves that the optimization problem is not in $\P$, unless $\coNP=\P$. We fill this small gap below and prove that $\twostage(\Xc,\U,\Yc)$ is indeed $\coNP$-hard (more precisely, $\coNP$-complete) using a different reduction, complementing the hardness already proved in Theorem~\ref{thm:static_coNPC} for the case $\U=\Ud$.

\begin{theorem}
    \label{thm:2stagescoNPC}
    $\twostage(\Xc,\U,\Yc)$ and $\boundedtwostage(\Xc,\U,\Yc)$ are $\coNP$-complete for $\U\in\{\Uc,\Ud\}$.
\end{theorem}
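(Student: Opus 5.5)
Two parts: membership of the unbounded problem, and hardness of the bounded one.

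\emph{Membership in $\coNP$.} I follow the proof of Theorem~\ref{thm:static_coNPC}, replacing reformulation~\eqref{eq:staticreformulatedtextreme_bis} by Lemma~\ref{lem:Xtwo}. Since $\X=\Xc$ and $T(u)$ is linear in $u$, $\Xtwostage$ is the polyhedron
\[
\set{x\in\R^{\nxt}_+}{Ax\leq a,\; (T(u)x)^\top\pi\leq 0,\;\forall (u,\pi)\in\W},
\]
described by finitely many linear inequalities in $x$. Fact~\ref{fact:inthull} applied to $\conv(\U)$, and standard bounds for $\Pi$, show that every $(u,\pi)\in\W$ has polynomial encoding length, so each such inequality does too.

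By Farkas' lemma, $\Xtwostage=\emptyset$ if and only if the LP analogous to~\eqref{eq:static_coNP} has a solution with $a^\top\alpha<0$. That LP has $\nxt$ constraints, with one multiplier $\beta_w$ per $w\in\W$. A basic solution therefore has at most $\nxt$ nonzero $\beta_w$, each with polynomial encoding length. A NO-certificate consists of these pairs $w_k=(u_k,\pi_k)$, namely extreme points or rays of $\conv(\U)$ and of $\Pi$, together with $\alpha$ and $\beta$. The verifier checks:
\begin{itemize}
\item the linear inequalities of the LP and $a^\top\alpha<0$;
\item $u_k\in\conv(\U)$ (or in its recession cone) and $\pi_k\in\Pi$.
\end{itemize}
The check $u_k\in\conv(\U)$ is not immediate when $\U=\Ud$. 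I would avoid it by relaxing the certificate to arbitrary points $u_k\in\U$, plus rays of $\conv(\U)$ certified as integer directions, each of polynomial size. Any $u\in\U$ and $\pi\in\Pi$ yield a valid inequality for $\Xtwostage$. So the Farkas certificate stays sound as long as every $u_k$ lies in $\U$ and every $\pi_k$ in $\Pi$, and membership in $\Ud$ of a given integer point is checkable in polynomial time. Rays need care: $\ray(\conv(\Ud))$ are integer vectors $r$ with $Br\leq 0$, which is verifiable directly. Completeness holds because the Farkas certificate built on $\W$ uses only such objects. This step is the main obstacle; the rest is routine.

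\emph{Hardness.} For $\U=\Ud$, hardness already follows from Theorem~\ref{thm:static_coNPC} and Observation~\ref{obs:staticleqtwostage}, and that reduction is bounded. For $\U=\Uc$, I reduce from the complement of an $\NP$-complete problem, say Partition, in the spirit of~\cite{guslitser2002uncertainty}. The instance:
\begin{itemize}
\item First stage: the singleton $\X=\{\one\}$, so only the inner $\forall u\,\exists y$ matters.
\item Uncertainty: $\Uc=\set{u\in[0,1]^n}{\sum_i a_iu_i=\tfrac12\sum_i a_i}$.
\item Recourse: $\Y(x,u)$ encodes $\max_{u}\sum_i\min(u_i,1-u_i)\leq 0$ via epigraph variables $y_i\geq u_i-\tfrac12$, $y_i\geq\tfrac12-u_i$, and $\sum_i y_i\geq \tfrac n2$. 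The recourse dependence on $u$ enters through $T(u)x$ with $x=\one$.
\end{itemize}
Then for every $u$ some feasible $y$ exists if and only if every $u\in\Uc$ has $\sum_i|u_i-\tfrac12|\geq\tfrac n2$, i.e.\ is binary. Equivalently, the answer is YES if and only if $\Uc$ contains no fractional point. Here I would fix the constraints so that this becomes ``no partition exists'': for instance, require $\sum_i y_i\geq\tfrac n2$ only on points where the concave function $\sum_i|u_i-\tfrac12|$ is maximised, and use the equivalence that $\max_{u\in\Uc}\sum_i|u_i-\tfrac12|=\tfrac n2$ exactly when a partition exists. The variables $y_i$ can be bounded by $1$, so the reduction is also bounded. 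The precise gadget needs tuning so that the $\forall\exists$ quantifiers match the complement of Partition; maximising a convex function over a polytope is the source of $\coNP$-hardness.
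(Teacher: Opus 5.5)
Your membership argument is the paper's (Lemma~\ref{lem:Xtwo}, Farkas' lemma, a dual solution with at most $\nxt$ nonzero multipliers). Your extra care in certifying $u_k\in\U$ rather than $u_k\in\ext(\conv(\U))$ is sound, provided the certificate also contains one point of $\U$: a direction $r\geq\zero$ with $Br\leq\zero$ yields a valid inequality only when $\U\neq\emptyset$. Your hardness for $\U=\Ud$ via Theorem~\ref{thm:static_coNPC} and Observation~\ref{obs:staticleqtwostage} is also valid; the paper instead reuses its $\ell_1$ gadget with maximum clique.

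The genuine gap is the case $\U=\Uc$. The gadget you describe fails, and the direction in which you propose to tune it cannot succeed. With $y_i\geq u_i-\tfrac12$, $y_i\geq\tfrac12-u_i$ and $\sum_i y_i\geq\tfrac n2$ (with or without $y_i\leq 1$), the vector $y=\one$ is feasible for every $u\in[0,1]^n$. So every instance you produce is a YES-instance: epigraph variables can only bound $|u_i-\tfrac12|$ from above. More fundamentally, for fixed $x$ the set $\set{u}{\Yc(x,u)\neq\emptyset}$ is a projection of a polyhedron, hence a polyhedron. No polyhedral recourse can therefore enforce the reverse-convex condition $\sum_i|u_i-\tfrac12|\geq\tfrac n2$ on all of $\Uc$, let alone a constraint active ``only at maximisers''. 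Your intermediate target, that $\Uc$ contains no fractional point, would not help either: a polytope has that property only if it has at most one point. (Also, $\sum_i|u_i-\tfrac12|$ is convex, not concave.)

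The repair is to reverse the inequality, which is exactly the paper's construction. Let $a_i\in\Z_{>0}$ be the Partition weights and $a_{\max}=\max_i a_i$, and take $\Y(x,u)=\set{y\in\R^n_+}{y_i\geq u_i-\tfrac12,\ y_i\geq\tfrac12-u_i\ \forall i,\ \sum_i y_i\leq\tfrac n2-\tfrac{1}{2a_{\max}}}$. The instance is then a YES-instance if and only if $\max_{u\in\Uc}\sum_i|u_i-\tfrac12|\leq\tfrac n2-\tfrac1{2a_{\max}}$. This convex function is maximised at a vertex of $\Uc$. A vertex has at most one fractional coordinate $u_j$, which satisfies $2a_ju_j\in\Z$ and hence $|u_j-\tfrac12|\leq\tfrac12-\tfrac1{2a_j}$. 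So the maximum equals $\tfrac n2$ if a partition exists and is at most $\tfrac n2-\tfrac1{2a_{\max}}$ otherwise. The instance is thus a YES-instance exactly when no partition exists, and everything is bounded. The paper phrases this via the complement $\exists u\in\U:\|u\|_1>V$. Your centring at $\tfrac12$ is actually the right touch there: $\Uc\subseteq\R^{\nut}_+$ makes $\|u\|_1=\one^\top u$ linear on $\Uc$, so the paper's appeal to the polytope case implicitly needs such an affine shift (available through the auxiliary coordinate $u_{\nut}=1$).
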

\begin{proof}
Applying Farkas' lemma to $\Xtwostage$ with $\X$ replaced by its polyhedral description $\set{x\in\R^\nxt}{A x \leq a, x\geq 0}$, $I$ is a YES-instance to \twostage if and only if
$$
a^\top \alpha  \geq 0, \quad \forall (\alpha,\beta)\in\set{(\alpha,\beta)\in\mathbb{R}_+^{1+|\W|}}{A^\top \alpha + \sum_{(u,\pi)\in\W}T^\top(u)\pi\cdot\beta_{u,\pi} \geq 0},
$$
which is equivalent to
\begin{equation}
    \label{eq:2roDEC:dual}
\min_{\alpha, \beta}\set{a^\top \alpha 
}{A^\top \alpha + \sum_{(u,\pi)\in\W}T^\top(u)\pi\cdot\beta_{u,\pi} \geq 0, \alpha, \beta \geq 0}\geq 0.
\end{equation}
Then, as in the proof of Theorem~\ref{thm:static_coNPC}, $I$ is a NO-instance to \twostage if and only if there exists a (basic) solution to the linear program in~\eqref{eq:2roDEC:dual} having a negative objective function. Since the linear program in~\eqref{eq:2roDEC:dual} has only $\nxt$ constraints, any basic solution contains at most $\nxt$ non-zero variables. Hence, if $I$ is a NO-instance, there exists a polynomial certificate $\alpha^*,\beta_{u_,\pi_1}^*,\ldots,\beta_{u_{\nxt},\pi_{\nxt}}^*$ (all other $\beta_{u}^*$ being zero) such that $a^\top \alpha^* + V \sum_{k=1}^{\nxt}\beta^*_{u_k,\pi_k}<0.$

To prove the hardness, consider the problem
$$
\exists x\in\R_+\ \forall u\in \U\ \exists y \in \set{y\in\R^{\nyt}}{-y\leq u\leq y, \one^\top y \leq x}: x\leq V.
$$
The complementary problem is
\begin{equation}
\label{eq:negatedproblem}
\forall x\ \exists u\in \U\ \forall y \in \set{y\in\R^{\nyt}}{-y\leq u\leq y, \one^\top y \leq x}: x > V.
\end{equation}
Observe that the inequality is satisfied for all $x$ and $y$ if and only if it is the case when $y_i=|u_i|$ for each $i\in[\nyt]$ and $x=\one^\top y$, respectively, so~\eqref{eq:negatedproblem} can be rewritten
\begin{equation}
\label{eq:2stagescoNPhard}
\exists u\in \U:\|u\|_1>V.
\end{equation}
When $\U$ is binary set, $\|u\|_1=u^\top \one^\nut$ so the hardness follows considering $\U$ as the feasibility set of the maximum clique problem, as in the proof of Theorem~\ref{thm:static_coNPC}. The problem is also known to be $\NP$-hard whenever $\U$ is a polytope, see~\cite{guslitser2002uncertainty} as well as earlier works~\cite{freund1985complexity}.
\end{proof}
The hardness of the problem with discrete recourse and decision-independent uncertainty has mostly been covered in the literature. Specifically, the authors of~\cite{GoerigkLW24} address the bounded cases of all optimization variables being discrete, with either continuous or discrete uncertainty. The case of first-stage continuous variables can be derived from these results by adding additional dummy recourse variables $z$ together with the inclusion of constraints $x=z$ in the definition of $\Y(x,u)$.
\begin{observation}
    \label{obs:calld_hard}
$\boundedtwostage(\Xc,\U,\Yd)$ is $\Sigma_3^p$-hard for $\U\in\{\Uc,\Ud\}$.
\end{observation}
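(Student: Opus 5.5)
The observation claims $\Sigma_3^p$-hardness of $\boundedtwostage(\Xc,\U,\Yd)$ for $\U\in\{\Uc,\Ud\}$. I would prove it by a many-one reduction from the fully discrete bounded case, $\boundedtwostage(\Xd,\U,\Yd)$. That case is $\Sigma_3^p$-hard by~\cite{GoerigkLW24}, both for continuous and for discrete uncertainty. The idea is to keep the uncertainty set unchanged and move the integrality of the first-stage decision into the recourse.

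\textbf{Construction.} Take an instance $I$ with
\[
\X_d=\set{x\in[0,R]^{\nxt}\cap\Z^{\nxt}}{Ax\le a},\qquad
\Y_d(x,u)=\set{y\in[0,R]^{\nyt}\cap\Z^{\nyt}}{T(u)x+Wy\le 0}.
\]
Build $I'$ as follows.
\begin{itemize}
\item The first-stage set is the relaxation $\X_c=\set{x\in[0,R]^{\nxt}}{Ax\le a}$.
\item The uncertainty set $\U$ is kept unchanged, so it stays polyhedral or discrete as before.
\item The recourse vector is extended to $(y,z)\in[0,R]^{\nyt+\nxt}$, with $(y,z)$ integer.
\item The recourse constraints are $T(u)x+Wy\le 0$ together with $x-z\le 0$ and $z-x\le 0$.
\end{itemize}
The new constraints $x=z$ are linear in $x$ with constant coefficients. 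They can therefore be absorbed into $T(u)$, since constant blocks are allowed via the auxiliary coordinate $u_{\nut}=1$. They also keep the zero right-hand side. So $I'$ is a valid instance of the target class, and its encoding size is clearly polynomial in that of $I$.

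\textbf{Correctness.}
\begin{itemize}
\item Suppose $I$ is a YES-instance with certificate $x\in\X_d$. Then $x\in\X_c$. For every $u$, take the $y$ given by $I$ and set $z=x$; this $z$ is integer. So $I'$ is a YES-instance.
\item Conversely, suppose $x\in\X_c$ certifies $I'$. The set $\U$ is nonempty; if it were empty, both instances would be trivially YES, and we may add a dummy scenario to avoid this. Pick any $u\in\U$. A feasible integer recourse $(y,z)$ exists, and $z=x$ forces $x\in\Z^{\nxt}$, so $x\in\X_d$. The $y$ components then witness $\Y_d(x,u)\neq\emptyset$ for every $u$. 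So $I$ is a YES-instance.
\end{itemize}

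\textbf{Main obstacle.} The only delicate point is nonemptiness of $\U$. Otherwise a fractional $x$ could certify $I'$ vacuously. I would handle it by checking nonemptiness directly in the reduction, or by noting that the hard instances of~\cite{GoerigkLW24} have nonempty uncertainty sets.
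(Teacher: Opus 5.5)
Your proposal is correct and is exactly the paper's argument: reduce from the $\Sigma_3^p$-hard fully discrete bounded case of~\cite{GoerigkLW24}, relax $x$ to $\Xc$, and move its integrality into the recourse through dummy integer variables $z$ with $x=z$. The one slip is your side remark on an empty uncertainty set, an edge case the paper ignores entirely. If $\U=\emptyset$, then $I$ is a YES-instance iff $\Xd\neq\emptyset$, whereas $I'$ is one iff $\Xc\neq\emptyset$, so they are not ``both trivially YES''. For $\Uc$ an LP check inside the reduction would suffice, but deciding $\Ud\neq\emptyset$ is itself $\NP$-hard, so there you should instead rely on the nonempty uncertainty sets of the hard instances of~\cite{GoerigkLW24}.
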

While the membership of these problems for $\Sigma_3^p$ is still open in the unbounded case, we prove it for most of the bounded settings. The membership of $\boundedtwostage(\Xd,\Ud,\Yd)$ (in fact, $\boundedtwostage(\Xd,\Udx,\Yd)$) for $\Sigma_3^p$ follows immediately from the definition. 
\begin{observation}
    \label{obs:XdUdYdinSigma}
$\boundedtwostage(\Xd,\Udx,\Yd)$ is in $\Sigma_3^p$.
\end{observation}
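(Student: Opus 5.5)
The membership of $\boundedtwostage(\Xd,\Udx,\Yd)$ in $\Sigma_3^p$ follows directly from the characterization of $\Sigma_3^p$ recalled in Section~\ref{sec:preliminaries}. We write the problem in the form $\exists X\ \forall Y\ \exists Z : P(I,X,Y,Z)$ and check that each quantified object has polynomial encoding length and that the predicate is polynomial-time decidable. In the bounded discrete setting, every $x\in\Xd$ is an integer vector in $[0,R]^{\nxt}$. Every $u\in\Udx$ is an integer vector in $[0,R]^{\nut}$, and every $y\in\Yd(x,u)$ is an integer vector in $[0,R]^{\nyt}$. Since $R$ is part of the input in binary, each component has encoding length $O(\enc{R})$. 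Hence each vector has encoding length $O(n\enc{R})$, which is polynomial in $\enc{I}$. No appeal to Facts~\ref{fact:milp} or~\ref{fact:inthull} is needed.

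The key step is to move the decision-dependent quantifier domain into the predicate, since the $\Sigma_3^p$ definition quantifies over all strings of a fixed polynomial length. We quantify $X=x$ over $\Z^{\nxt}\cap[0,R]^{\nxt}$, $Y=u$ over $\Z^{\nut}\cap[0,R]^{\nut}$, and $Z=y$ over $\Z^{\nyt}\cap[0,R]^{\nyt}$. We then define
\[
P(I,x,u,y)\;\equiv\; \bigl(Ax\leq a\bigr)\wedge\Bigl(\bigl(B(x)u\not\leq b\bigr)\vee\bigl(T(u)x+Wy\leq 0\bigr)\Bigr).
\]
The formula $\exists x\ \forall u\ \exists y: P$ is then equivalent to the original problem. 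For $\exists x$, the conjunct $Ax\le a$ forces $x\in\Xd$. For $\forall u$, a $u\notin\Udx$ satisfies $P$ trivially through the disjunct $B(x)u\not\leq b$, for any choice of $y$. So only the $u\in\Udx$ remain, and for those we need some $y\in\Yd(x,u)$.

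Finally, $P$ is decidable in polynomial time. The entries of $B(x)$ and $T(u)$ are linear in $x$ and $u$ with rational coefficients from the input, so evaluating them and checking the linear inequalities is a polynomial number of arithmetic operations on polynomial-size rationals.

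There is no real obstacle here. The only point needing care is the treatment of the decision-dependent set $\Udx$ inside the universal quantifier, which the implication in $P$ handles. The same argument covers $\boundedtwostage(\Xd,\Ud,\Yd)$ as the special case where $B$ is constant.
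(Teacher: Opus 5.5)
Your proposal is correct and follows the paper's approach: the paper states that membership follows immediately from the definition of $\Sigma_3^p$, because all bounded integer vectors $x$, $u$, $y$ have polynomial encoding length and the predicate can be checked in polynomial time. You spell this out, including the step the paper leaves implicit of moving the decision-dependent constraint $B(x)u\leq b$ into the predicate through the disjunction (an implication).
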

We focus next on the two cases with discrete uncertainty.
\begin{theorem}
    \label{thm:XcUdYdinSigma}
    $\boundedtwostage(\Xc,\Ud,\Yd)$ is in $\Sigma_3^p$.
\end{theorem}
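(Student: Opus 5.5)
We need to show that $\boundedtwostage(\Xc,\Ud,\Yd)$ is in $\Sigma_3^p$. Following the pattern of the membership proofs above, it suffices to find polynomial-size witnesses for each quantifier block. Since $\U=\Ud$ is bounded and integer, every $u\in\Ud$ has polynomial encoding length. Likewise every $y\in\Yd(x,u)$ is integer and lies in $[0,R]^{\nyt}$, so it also has polynomial encoding length. Checking whether $x\in\Xc$, $u\in\Ud$ and $y\in\Yd(x,u)$ takes polynomial time. The only obstacle is therefore the outermost certificate $x\in\Xc$, which is continuous and may have exponential encoding length. The plan is to show that whenever a YES certificate $x^*$ exists, there is also one, $\hat x$, of polynomial encoding length.

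The key step is to fix the recourse decisions chosen at $x^*$. The set $\Ud$ is finite, though possibly exponentially large. For each $u\in\Ud$, pick some $y^u\in\Yd(x^*,u)$, which exists because $x^*$ is a YES certificate. Consider the set
$$
\Q\equiv\set{x\in\Xc}{T(u)x+Wy^u\leq 0,\;\forall u\in\Ud}.
$$
Since $u$ is fixed in each constraint, $T(u)$ is a constant matrix, so every constraint is linear in $x$. Moreover, because $u$ and $y^u$ are integer and bounded by $R$, each inequality has encoding length polynomial in $\langle I\rangle$. Hence $\Q$ is a rational polytope of polynomial facet complexity, even though it may have exponentially many inequalities.

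The set $\Q$ contains $x^*$, so it is non-empty. By Fact~\ref{fact:milp}, applied with $n^d=0$ (or simply by the standard bound on the vertices of a polyhedron of given facet complexity), $\Q$ contains a vertex $\hat x$ of polynomial encoding length. Every $x\in\Q$ is a YES certificate: for each $u\in\Ud$, the integer vector $y^u$ lies in $[0,R]^{\nyt}$ and satisfies $T(u)x+Wy^u\leq0$, so $y^u\in\Yd(x,u)$. In particular $\hat x$ is a valid outermost certificate.

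So the problem can be written as $\exists \hat x\ \forall u\ \exists y: P$, where all three blocks have polynomial length and $P$ is polynomial-time checkable, which places it in $\Sigma_3^p$. The main point needing care is that the argument relies on facet complexity rather than the number of inequalities, which is legitimate since the vertex bound depends only on facet complexity. This step is also exactly where the argument breaks down for $\Ucx$: there the set $\U(x)$ changes with $x$, and the technique of Lemma~\ref{lem:Udx} would be needed instead.
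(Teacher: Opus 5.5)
Your proof is correct and follows the paper's argument: fix a recourse $y^u\in\Yd(x^*,u)$ for each of the finitely many $u\in\Ud$, note that the resulting polytope in $x$ contains $x^*$ and has polynomial facet complexity, and take one of its vertices as the short outermost certificate. One correction to your closing aside: Lemma~\ref{lem:Udx} is the tool for the discrete decision-dependent set $\Udx$ (as in Theorem~\ref{thm:XcUdYdxinSigma}), whereas for $\Ucx$ membership in $\Sigma_3^p$ is still open.
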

\begin{proof}
Assume there is a YES-answer to the non-emptyness of
\begin{equation}
    \nonumber
\set{(x,y)\in\R^\nxt\times\Z^{|\Ud|\times \nyt}}{Ax \leq b,\; T(u)x + W y(u) \leq 0, \; \forall u\in \Ud},
\end{equation}
and let us denote by $x^*,y^*$ this feasible point. Then, $x^*$ belongs to the polyhedron
\begin{equation}
    \nonumber
\set{x\in\R^\nxt}{Ax \leq b,\; T(u)x + W y^*(u) \leq 0, \; \forall u\in \Ud},
\end{equation}
the vertices of which have polynomial encoding lengths, because each $u$ and $y^*(u)$ has polynomial encoding length. Hence, any vertex provides an outermost certificate, and the two remaining quantifiers are defined over sets containing only elements of polynomially bounded encoding lengths.
\end{proof}

\begin{theorem}
\label{thm:XdUcYdinSigma}
$\boundedtwostage(\Xd,\Ucx,\Yd)$ is in $\Sigma_3^p$.
\end{theorem}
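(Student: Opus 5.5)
We prove membership by exhibiting a $\Sigma_3^p$ description of the form $\exists x\ \forall u\ \exists y$ in which every quantified object has polynomial encoding length, and the final predicate can be checked in polynomial time.

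\emph{Outer quantifier.} Since $\X=\Xd$ is bounded by $R$, every candidate $x$ is an integer vector in $[0,R]^\nxt$. It therefore has polynomial encoding length, and no small-certificate argument is needed for the outermost block.

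\emph{Middle quantifier.} Fix such an $\hat x$. The matrix $B(\hat x)$ is rational with polynomial encoding length, so $\U(\hat x)$ is a polytope with polynomial facet complexity, and it is contained in $[0,R]^\nut$. Suppose $(I,\hat x)$ is a NO-instance, i.e.\ there is $u\in\U(\hat x)$ with $\Yd(\hat x,u)=\emptyset$. Our aim is to replace $u$ by a $\hat u$ of polynomial encoding length that is still a violating scenario. We will decompose $\U(\hat x)$ into polyhedral pieces on each of which feasibility of the integer recourse is constant, and then take a vertex of the piece containing $u$.

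Since $\Y$ is bounded, the recourse candidates form the finite set $\mathcal{Y}_R=\Z^\nyt\cap[0,R]^\nyt$. For each $y\in\mathcal{Y}_R$, the condition $y\notin\Yd(\hat x,u)$ means that some row $j$ satisfies $(T(u)\hat x+Wy)_j>0$. Because $T$ is linear in $u$ and $\hat x$ is fixed, each such row is an affine inequality in $u$. Hence the set of violating scenarios
\[
\set{u\in\U(\hat x)}{\forall y\in\mathcal{Y}_R\ \exists j:(T(u)\hat x+Wy)_j>0}
\]
is a finite union of sets of the form $\U(\hat x)\cap\set{u}{(T(u)\hat x+Wy)_{\phi(y)}>0,\ \forall y\in\mathcal{Y}_R}$, one for each selection $\phi:\mathcal{Y}_R\to[\my]$. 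Each such set is a relatively open face-type polyhedron. Every constraint defining it has polynomial encoding length, because $y$, $\hat x$, $W$ and $T$ all do, even though there are exponentially many constraints.

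Fix the selection $\phi^*$ realized by the given $u$. Here we mimic the proof of Lemma~\ref{lem:Udx}: consider the closure of the corresponding set, which is a polytope $P$ of polynomial facet complexity containing $u$. The strict inequalities are not implied equalities of $P$, since $u$ satisfies them strictly, so every relative-interior point of $P$ satisfies them strictly. We take $\hat u$ to be the centroid of $\dim(P)+1$ affinely independent vertices of $P$. Each vertex has polynomial encoding length, so $\hat u$ does too, and $\hat u$ violates every $y\in\mathcal{Y}_R$. This gives the small NO-certificate for the middle quantifier.

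\emph{Inner quantifier.} Given $\hat x,\hat u$ of polynomial size, any $y\in\Yd(\hat x,\hat u)\subseteq[0,R]^\nyt$ has polynomial size, and the predicate $T(\hat u)\hat x+Wy\le 0$ is checkable in polynomial time. Therefore the problem is expressible as $\exists\hat x\ \forall\hat u\ \exists y$ with polynomial-size quantified objects and a polynomial-time predicate, placing it in $\Sigma_3^p$.

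The main obstacle is the middle step: handling the strict inequalities over exponentially many $y$, and making sure the relative-interior argument from Lemma~\ref{lem:Udx} carries over. That lemma does not depend on the number of constraints, only on their encoding length and on the facet complexity of $P$, so we expect it to transfer directly.
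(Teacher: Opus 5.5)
Your proof is correct, but the middle step follows a different route from the paper's.

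\textbf{The paper's route.} It writes the complement as $\exists u\in\Ucx: f(u)>0$, with $f(u)=\min_{y}\max_{i}(T_i(u)x+W_iy)$. It then observes that this continuous piecewise-linear $f$ attains its maximum over the polytope at a vertex of its graph. Such a vertex is the unique solution of a linear system built from pieces of $f$ and the constraints of $\Ucx$, so it has polynomial encoding length. Because $\max f>0$, the maximizer is itself a violating scenario, so strictness comes for free.

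\textbf{Your route.} You fix the cell indexed by the selection $\phi^*$ and pass to its closure $P$. You then reuse the relative-interior/centroid device of Lemma~\ref{lem:Udx}, now in $u$-space, to keep the strict inequalities.

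\textbf{What each buys.} The paper's argument is shorter. However, it leaves implicit the cell decomposition behind ``vertex of the graph'' for a function that is neither convex nor concave and has exponentially many pieces. Yours makes every step explicit (closure, implied equalities, relative interior). It also aligns this proof with the other decision-dependent membership results that rely on Lemma~\ref{lem:Udx}, at the cost of the affine-independence argument.

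\textbf{How the two connect.} Inside your cell, maximize $\delta$ subject to $u\in\U(\hat x)$ and $(T(u)\hat x+Wy)_{\phi^*(y)}\geq\delta$ for all $y$. An optimal vertex has polynomial encoding length and $\delta>0$, which is precisely the paper's argument restricted to a single piece.

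A minor wording point: the set $\U(\hat x)\cap\{u : (T(u)\hat x+Wy)_{\phi(y)}>0\ \forall y\}$ is not relatively open. It is a closed polytope intersected with open half-spaces. This does not affect the argument, since you only use its closure $P$ and the fact that the given $u$ satisfies the strict inequalities.
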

\begin{proof}
  By definition, any $x\in\Xd$ has a polynomial encoding length. Hence, we must show that, given $x$, the outermost certificate of the problem $\forall u \in \Uc\ \exists y \in \Yd(x, u)$ can be restricted to vectors of polynomial encoding lengths. We proceed by showing that any certificate for a YES answer to the complement problem, 
  \begin{equation}
  \label{eq:negatedadversarial}
  \exists u \in \Ucx : \Yd(x, u) = \emptyset, 
  \end{equation}
  has polynomial encoding length. Let $\Y\equiv [0,R]^\nyt\cap\Z_+^\nyt$ and observe that any given $y\in\Y$ is not in $\Yd(x,u)$ if it violates one of the constraints of $\Yd(x,u)$, so Problem~\eqref{eq:negatedadversarial} can be reformulated as
\begin{align*}
  &\exists u \in \Ucx\ \forall y \in \Y\ \exists i \in [m_y] : T_i(u) x + W_i y > 0 \\
  \Leftrightarrow\quad &\exists u \in \Ucx : \min_{y \in \Y} \max_{i \in [m_y]} (T_i(u) x + W_i y) > 0
\end{align*}
Function $f(u)=\min_{y \in \Y} \max_{i \in [m_y]} (T_i(u) x + W_i y)$ is a piecewise linear function in $u$, defined on the domain $\Ucx$, so its maximum is reached at a vertex of the graph of $f$. Any such vertex is the unique solution of a linear system involving the pieces of $f$ and the linear constraints defining $\Ucx$, all of which have polynomial encoding length since $x$ has polynomial encoding length.
This proves that any YES instance to~\eqref{eq:negatedadversarial} has a certificate with polynomial encoding length. Finally, for all $x$ and $u$, each $y\in\Y(x,u)$ has a polynomial encoding length, proving the result.
\end{proof}

In the rest of the section, we discuss the remaining cases with decision-dependent uncertainty. Let us start with the unbounded problems. While we already observed in Observation~\ref{obs:staticleqtwostage} that $\static(\X,\U(x))$ reduces to $\twostage(\X,\U(x),\Y)$, it also holds that $\static(\Xd,\U(x))$ reduces to $\twostage(\X,\U(x),\Yd)$, by defining 
$$
\Y(x,u)\equiv\set{y\in \Z^\nxt_+}{y=x,\;u^\top H_\ell x \leq 0,\; \forall \ell \in [L]},
$$ 
where the constraints $y=x$ imply that $x$ must be integer. These reductions, together with the undecidability of $\static(\Xd,\U(x))$, imply the following undecidability results.
\begin{observation}
\label{obs:undecidableeasy}
$\twostage(\X,\U(x),\Y)$ is undecidable if $\X=\Xd$ or $\Y=\Yd$.
\end{observation}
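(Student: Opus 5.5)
The statement has four cases: $\X\in\{\Xc,\Xd\}$, $\Y\in\{\Yc,\Yd\}$ with $\U=\U(x)$ decision-dependent and $\X=\Xd$ or $\Y=\Yd$. In each case I would reduce from an undecidable variant of \static, namely $\static(\Xd,\Ucx)$ from Corollary~\ref{cor:XdUcx} or $\static(\Xd,\Udx)$ from Corollary~\ref{cor:XUdx}. All reductions are many-one, so undecidability transfers.

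\emph{Case $\X=\Xd$.} Here I would invoke Observation~\ref{obs:staticleqtwostage} directly. An instance of $\static(\Xd,\U(x))$ maps to the instance of $\twostage(\Xd,\U(x),\Y)$ with recourse set $\Y(x,u)=\set{y=\zero}{u^\top H_\ell x\le 0,\ \forall\ell\in[L]}$. Two points need checking.
\begin{itemize}
\item This set must be of the admissible form $\set{y\ge0}{T(u)x+Wy\le 0}$ with $T$ linear in $u$. The constraint $u^\top H_\ell x\le 0$ is bilinear, so it is linear in $u$ for the matrix $T(u)$, and it is linear in $x$. The constraint $y=\zero$ is encoded by rows $y\le 0$, which use the auxiliary constant coordinate of $x$ if needed.
\item The encoding is valid for both $\Yc$ and $\Yd$, since $\zero$ is integral.
\end{itemize}
The equivalence then reads: $\Y(x,u)\ne\emptyset$ if and only if $u^\top H_\ell x\le0$ for all $\ell$, so the \twostage instance is a YES-instance exactly when the \static instance is. Taking $\U=\Ucx$ or $\U=\Udx$, the undecidability follows from Corollary~\ref{cor:XdUcx} or Corollary~\ref{cor:XUdx}, respectively.

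\emph{Case $\X=\Xc$, $\Y=\Yd$.} I would encode integrality of $x$ in the recourse. Start from an instance of $\static(\Xd,\U(x))$ with feasible set $\Xd=\Xc\cap\Z^{\nxt}$. Map it to the two-stage instance with first-stage set $\Xc$, the same $\U(x)$, and
$$\Y(x,u)=\set{y\in\Z_+^{\nxt}}{y=x,\ u^\top H_\ell x\le 0,\ \forall\ell\in[L]}.$$
The equality $y=x$ consists of rows of $T(u)x+Wy\le0$ with $T$ constant in $u$.

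For correctness in the forward direction, an integral YES-certificate $x$ of \static works with $y=x$ for every $u$. In the reverse direction, let $x$ be a two-stage certificate. I must show $\U(x)\ne\emptyset$, so that some $y=x$ integral exists and hence $x\in\Xd$; then for every $u$ the constraints give $u^\top H_\ell x\le0$.

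The main obstacle is the degenerate case $\U(x)=\emptyset$ for a fractional $x$, which would make the universal quantifier vacuous. I would handle it by padding the uncertainty set with a dummy coordinate so that it is always non-empty. Concretely, set $\U'(x)=\U(x)\times\{0\}\cup\{\text{dummy } u_0=1, \text{other entries } 0\}$, ensuring that only $y=x$ (and not $u^\top H_\ell x\le 0$) is enforced under the dummy scenario. Since a union is not a polyhedron, I would instead modify the source \static instance beforehand, which is cleaner. The reductions behind Corollaries~\ref{cor:XdUcx} and~\ref{cor:XUdx} impose $u=x$ (or $\alpha=x$), so $\U(x)$ is nonempty for every $x\in\Xc$. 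I would cite those constructions to assert non-emptiness, and conclude undecidability for $\X=\Xc$ with $\U\in\{\Ucx,\Udx\}$ and $\Y=\Yd$.
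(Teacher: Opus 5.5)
\paragraph{Overall route.} Your route is the paper's. For $\X=\Xd$ you use Observation~\ref{obs:staticleqtwostage}. For $\Y=\Yd$ you use the recourse set $\set{y\in\Z_+^{\nxt}}{y=x,\ u^\top H_\ell x\le 0,\ \forall\ell\in[L]}$ to force integrality of $x$. You then invoke Corollaries~\ref{cor:XdUcx} and~\ref{cor:XUdx}. The case $\X=\Xd$ is fine. You are also right that the second reduction is only sound if $\U(x)\neq\emptyset$ for every $x\in\Xc$; the paper's one-line argument passes over this point.

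\paragraph{The gap: the case $(\Xc,\Udx,\Yd)$.} You claim that the constructions behind the corollaries give $\U(x)\neq\emptyset$ for all $x\in\Xc$. This is false for Corollary~\ref{cor:XUdx}. There, $B(x)u\le b$ encodes $u=x$ with $u$ integral, so $\Udx=\set{u\in\Z_+^{\nut}}{u=x}$ is empty exactly when $x$ is fractional. In your two-stage instance with first-stage set $\Xc$, every fractional $x\in\Xc$ is therefore a vacuous YES-certificate, because the recourse constraint $y=x$ is never tested. NO-instances of \QIP are thus mapped to YES-instances, and the case $(\Xc,\Udx,\Yd)$ is not proved. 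For $\Ucx$ your conclusion does hold, but not because of $\alpha=x$. Here $\alpha$ is the dual multiplier in $\Xstatic_{\textup{lift}}$ and has nothing to do with $\U(x)$. Non-emptiness instead comes from the choice $b=\zero$ in that construction, which gives $\zero\in\Ucx$.

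\paragraph{A repair.} You must actually modify the $\Udx$ source instance so that a harmless scenario always exists. For example:
\begin{enumerate}
\item Add a binary coordinate $s$ to $u$ and impose $u_i=s\,x_i$ on the coordinates that copy $x$, including the copy of $x_{\nxt}=1$. This is admissible since $B(x)$ is linear in $x$.
\item Keep a separate coordinate of $u$ fixed to $1$, which carries the affine terms of $T(u)$ such as the rows encoding $y=x$.
\end{enumerate}
With these changes, the following hold:
\begin{enumerate}
\item The scenario $s=0$, with $u=\zero$ on the copied coordinates, always belongs to $\Udx$ and gives $u^\top H_\ell x=0\le 0$. Hence the source instance still encodes $\exists x\in\Xd: x^\top H_\ell x\le 0$ for all $\ell\in[L]$.
\item The scenario $s=1$ exists exactly when $x$ is integral, and it enforces $x^\top H_\ell x\le 0$.
\item In the two-stage instance, the $s=0$ scenario still demands some $y\in\Z_+^{\nxt}$ with $y=x$, which rules out every fractional $x\in\Xc$.
\end{enumerate}
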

The complexity of the remaining two cases, $\twostage(\Xc,\Ucx,\Yc)$ and $\twostage(\Xc,\Udx,\Yc)$, is more difficult to characterize. They inherit the hardness from $\static(\Xc,\Ucx)$ and $\twostage(\Xc,\Uc,\Yc)$ (resp. $\static(\Xc,\Udx)$ and $\twostage(\Xc,\Ud,\Yc)$) but it is unknown whether they actually belong to the corresponding complexity classes. For $\twostage(\Xc,\Ucx,\Yc)$, we can use Farkas' lemma as in the proof of Lemma~\ref{lem:Xtwo} to reformulate the problem as in $\Sigma_2\mathbb{R}$
$$
\exists x\ \forall \pi,u:x\in\Xc\wedge\left(\left(\pi\in\Pi\wedge u\in\Ucx\right)\implies (T(u)x)^\top \pi \leq 0\right)?
$$
\begin{observation}
    \label{obs:XcUcxYc}
$\twostage(\Xc,\Ucx,\Yc)$ is in $\Sigma_2\R$.
\end{observation}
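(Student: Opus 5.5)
The statement is Observation~\ref{obs:XcUcxYc}: $\twostage(\Xc,\Ucx,\Yc)$ lies in $\Sigma_2\R$. The plan is to rewrite the problem, using LP duality for the recourse, as a first-order sentence over the reals with quantifier pattern $\exists\forall$. The matrix of that sentence should be a quantifier-free Boolean combination of polynomial (in)equalities of degree at most three with integer coefficients. Its size must be polynomial in $\enc{I}$.

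\emph{Eliminate the inner existential quantifier.} Fix $x\in\Xc$ and $u\in\Ucx$. By Farkas' lemma, as in the proof of Lemma~\ref{lem:Xtwo}, there exists $y\ge 0$ with $T(u)x+Wy\le 0$ if and only if $(T(u)x)^\top\pi\le 0$ for every $\pi\in\Pi=\set{\pi\in\R^\my_+}{W^\top\pi\ge 0}$. Note that $\Pi$ does not depend on $x$ or $u$. This step does not use decision-independence of $\U$, because only the innermost linear system is dualized. So the two universal blocks $\forall u\,\forall\pi$ merge into a single universal block. The problem is then equivalent to
\[
\exists x\ \forall (u,\pi):\ x\in\Xc\ \wedge\ \bigl((u\in\Ucx\wedge\pi\in\Pi)\implies (T(u)x)^\top\pi\le 0\bigr).
\]

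\emph{Check the syntax.} The condition $x\in\Xc$ consists of linear inequalities. The condition $u\in\Ucx$ reads $B(x)u\le b$, $u\ge0$, with $B$ linear in $x$, so it is a set of quadratic inequalities. The condition $\pi\in\Pi$ is linear. The conclusion $(T(u)x)^\top\pi\le 0$ is cubic, since $T$ is linear in $u$. The implication is expressed as $\neg(\cdot)\vee(\cdot)$, which is permitted in the quantifier-free formula. Rational coefficients are cleared to integers by multiplying each constraint by the common denominator of its coefficients. This keeps the encoding polynomial.

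\emph{Justify the quantifier structure.} The number of variables is $\nxt+\nut+\my$ and the number of atoms is polynomial. The sentence therefore has exactly the prenex form $\exists X_1\forall X_2:F$ defining $\Sigma_2\R$, and the map from instances to sentences is computable in polynomial time. Its truth is equivalent to the instance being a YES-instance.

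The only delicate point is making sure no third quantifier block arises. Dualizing with Farkas' lemma, rather than keeping $\exists y$, is exactly what avoids it, and this works because the recourse set $\Yc$ is a continuous polyhedron. I expect no real obstacle beyond this.
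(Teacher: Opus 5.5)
Your proposal is correct and follows the paper's argument: you eliminate the recourse with Farkas' lemma as in Lemma~\ref{lem:Xtwo}, merge $\forall u$ and $\forall\pi$ into one universal block, and write exactly the paper's sentence $\exists x\ \forall(u,\pi): x\in\Xc\wedge\bigl((u\in\Ucx\wedge\pi\in\Pi)\implies (T(u)x)^\top\pi\le 0\bigr)$. Your extra checks of syntax, degree, and coefficient clearing add detail the paper leaves implicit but do not change the approach.
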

While we do not currently know whether $\twostage(\Xc,\Ucx,\Yc)$ is complete for $\Sigma_2\R$, we can prove hardness for the corresponding class in the polynomial hiearchy, $\Sigma_2^p$. The proof of the following result borrows ingredients from the proofs of Theorem~\ref{thm:XcUdx} and of the $\Sigma_2^p$-hardness of $\twostage(\Xc,\Uc,\Yc)$ from~\cite{lefebvre2025exact}.
\begin{theorem}
    \label{thm:XcUcxYcH}
$\twostage(\Xc,\Ucx,\Yc)$ is $\Sigma_2^p$-hard.
\end{theorem}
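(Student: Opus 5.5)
Like Theorem~\ref{thm:XcUdx}, I would reduce from the $\Sigma_2^p$-hard bilevel knapsack problem~\eqref{eq:bilevelknapsack}. The new difficulty is that $\U(x)$ is now continuous, so the follower's binary choice $Y$ can no longer be encoded directly by $u$. Following the $\Sigma_2^p$-hardness proof of $\twostage(\Xc,\Uc,\Yc)$ in~\cite{lefebvre2025exact}, I would let the recourse stage recover the combinatorial structure. The adversary picks a continuous $u$ in a polytope whose vertices correspond to binary vectors. The second stage, through a continuous recourse $y$ and LP duality, then evaluates a piecewise linear \emph{convex} function of $u$ that penalises fractional $u$. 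Concretely, I would take
\[
\U(x)=\set{u\in[0,1]^n}{b^\top u\leq B,\ x+u\leq (2-\epsilon)\one^n},
\]
and design $\Y(x,u)$ so that it is non-empty iff $b^\top u - V - M\sum_i \min(u_i,1-u_i)\le 0$ for a large constant $M$. Here the recourse variables $y_i\le u_i$, $y_i\le 1-u_i$ serve as a certificate, and the feasibility test on $\Y(x,u)$ compares $b^\top u - M\one^\top y$ with $V$.

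The key steps, in order, are as follows.

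\textbf{Step 1.} Check that fractional $u$ are never worse for the first-stage player than binary ones. The violation function $g(u)=b^\top u - V - M\sum_i\min(u_i,1-u_i)$ is convex, so its maximum over the polytope $\U(x)$ is attained at a vertex. The polytope has the knapsack constraint, so its vertices may have one fractional coordinate. With $M$ larger than $\|b\|_1$, such a vertex still loses at least $M\cdot$(distance to integrality) minus the gain in $b^\top u$. I expect to need an explicit bound on the denominators of these fractional coordinates in order to choose $M$ polynomially. An alternative is to apply the penalty only to that coordinate's distance to $\{0,1\}$ along the knapsack edge.

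\textbf{Step 2.} Show that a YES-certificate $X$ of~\eqref{eq:bilevelknapsack} yields $x=X$. Then $x+u\le 2-\epsilon$ forces $u_i\le 1-\epsilon$ on $\{i: X_i=1\}$. The penalty then makes those coordinates useless to the adversary (I would scale $M\epsilon>\|b\|_1$). So the adversary effectively chooses a binary $Y$ with $X+Y\le\one$, $b^\top Y\le B$, and feasibility of the second stage reduces to $b^\top Y\le V$.

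\textbf{Step 3.} For the converse, take a YES-certificate $x$ of the constructed instance with the fewest fractional components. I would reuse the rounding argument of Theorem~\ref{thm:XcUdx}. Components at most $1-\epsilon$ can be rounded to $0$ without enlarging the set of vertices of $\U(x)$ that survive the penalty. Components above $1-\epsilon$ can be rounded to $1$, because otherwise $a^\top x>(1-\epsilon)(A+1)>A$ with $\epsilon=1/(2A)$. Because of the penalty, only the ``integral part'' of $\U(x)$ matters, so this step should go through as before.

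The main obstacle is Step~1 combined with the interaction with $x$. The continuous set $\U(x)$ changes continuously with $x$, whereas in Theorem~\ref{thm:XcUdx} rounding preserved $\U(x)$ exactly. Here I must instead argue that rounding preserves the maximum of $g$ over $\U(x)$. This requires $M$ and $\epsilon$ to be chosen jointly so that every $u$ with some coordinate in $(0,1)$, in particular those with $u_i\le 1-\epsilon$ forced by $x_i$, is dominated. If the single-fractional-vertex issue becomes messy, my fallback is to drop the knapsack row from $\U(x)$ and move $b^\top u\le B$ into the recourse as an ``escape'' option. In that version, infeasible follower choices are automatically satisfied by a slack variable in $\Y(x,u)$, so $\U(x)$ becomes a box intersected with $x+u\le(2-\epsilon)\one$, whose vertices are integral.
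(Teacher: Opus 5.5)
Your construction is exactly the paper's: the same $\X$ and $\U(x)$, and the recourse $y\le u$, $y\le\one^n-u$, $b^\top u-M\one^\top y\le V$. It gives $\Y(x,u)\neq\emptyset$ iff $f(u)\equiv b^\top u-M\sum_i\min(u_i,1-u_i)\le V$. However, the converse direction (your Step~3) is justified by a false claim. Step~1 is also left open.

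\textbf{Step 3.} For a fractional certificate $x$ it is not true that only the integral part of $\U(x)$ matters. Take $x_i=1-\epsilon+\eta$ with small $\eta>0$, and let item $i$ fit into the knapsack. The adversary may take $u_i=1-\eta$ at a penalty of only $M\eta$, gaining almost $b_i$ on that coordinate. Every binary $u\in\U(x)$, by contrast, has $u_i=0$. For any fixed $M$ and $\epsilon$ this happens once $\eta$ is small enough, so no joint choice of $M,\epsilon$ makes every fractional $u$ dominated, and rounding need not preserve the maximum.

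The missing idea is that you only need the maximum not to increase, which is monotonicity. Since $f$ does not depend on $x$, $\max_{u\in\U(x)}f(u)$ is monotone under inclusion of $\U(x)$. Define the rounding $X^*_i=1$ if $x_i>1-\epsilon$ and $X^*_i=0$ otherwise. It satisfies $\U(X^*)\subseteq\U(x)$:
\begin{itemize}
\item for $x_i\le1-\epsilon$ the effective bound on $u_i$ is $1$ both before and after;
\item for $x_i>1-\epsilon$ the bound drops from $2-\epsilon-x_i$ to $1-\epsilon$.
\end{itemize}
Hence $\max_{\U(X^*)}f\le\max_{\U(x)}f\le V$ immediately. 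Your estimate on $a^\top X^*$ gives $X^*\in\X$. Step~1 applied to the binary $X^*$ then shows that the follower's optimal value in~\eqref{eq:bilevelknapsack} at $X^*$ is at most $V$. This is the paper's argument. It needs no ``fewest fractional components'' device. The round-down-to-$\delta$ step of Theorem~\ref{thm:XcUdx} must not be imported, since here it enlarges $\U(x)$.

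\textbf{Step 1.} Step~1 is therefore needed, and true, only for binary $x$. There your bounds $M>\|b\|_1$ and $M\epsilon>\|b\|_1$ are too weak. Take $A=1$ and a free coordinate with $b_1=10$, $B=9$. The vertex $u_1=9/10$ has $f=9-M/10>0$ for every $M<90$, while your bounds only require $M>20$.

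The denominator bound you anticipate comes from zeroing the blocked coordinates first.
\begin{itemize}
\item Zeroing loses nothing, because $f_i\le0$ on $[0,1-\epsilon]$ once $M\epsilon>b_i(1-\epsilon)$.
\item A maximizer of the convex $f$ can then be taken at a vertex of the knapsack polytope on the free coordinates.
\item Such a vertex has at most one fractional entry. Since the knapsack row is tight and $b$ is integral, that entry equals $k/b_i$ with $0<k<b_i$.
\item Its gain $b_iu_i$ is at most $b_i-1$ and its penalty at least $M/b_i$.
\end{itemize}
So $M>\max_i b_i(b_i-1)$, together with $M>\max_i b_i(2A-1)$, makes zeroing that entry profitable. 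The paper uses $M=\max_i b_i^2+2A\max_i b_i$.

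Your fallback is unnecessary. Moreover, with $\Y=\Yc$ the set of $u$ admitting a recourse is the projection of a polyhedron, hence convex. A slack variable therefore cannot make $\Y(x,u)$ non-empty exactly on the generally non-convex set $\{f(u)\le V\}\cup\{b^\top u>B\}$.
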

\begin{proof}
We reduce from the bilevel knapsack problem~\eqref{eq:bilevelknapsack} introduced in the proof of Theorem~\ref{thm:XcUdx}, known to be $\Sigma_2^p$-hard~\cite{caprara2013complexity}. For a binary $X$ with $a^\top X\leq A$, let us write $\Phi(X)\equiv\max\set{b^\top u}{u\in\{0,1\}^n,\ b^\top u\leq B,\ u\leq\one^n-X}$, so that~\eqref{eq:bilevelknapsack} is a YES-instance if and only if there exists a feasible binary $X$ with $\Phi(X)\leq V$.
 
We set $\epsilon=\frac{1}{2A}$ and let $M$ be an integer satisfying $M>\max_i(b_i^2-b_i)$ and $M>\max_i b_i(2A-1)$, for instance $M=\max_i b_i^2+2A\max_i b_i$, which has polynomial encoding length. The reduction defines the instance $I'$ through the continuous sets $\X=\set{x\in[0,1]^n}{a^\top x\leq A}$,
$$
\U(x)=\set{u\in[0,1]^n}{b^\top u\leq B,\ x+u\leq(2-\epsilon)\one^n},
$$
which are exactly the sets used in Theorem~\ref{thm:XcUdx}, except that $u$ now ranges over $[0,1]^n$ instead of $\{0,1\}^n$; the decision-dependence is carried by the constraint $x+u\leq(2-\epsilon)\one^n$. We introduce recourse variables $y$ that incentivate $u$ to be binary, by relaxing the constraint $b^\top u\leq V$ by an amount proportional to the fractionality of $u$:
$$
\Y(x,u)=\set{y\in\R_+^n}{y\leq u,\ y\leq\one^n-u,\ b^\top u-M\one^\top y\leq V},
$$
where $y$ quantify the fractionality of $u$; in particular, if $u\in\{0,1\}^n$, then $y=\zero^n$. Observe that $\Y$ is a polyhedron of type $\Yc$ that does not depend on $x$.
 
We first reformulate the recourse. The first two constraints of $\Yc$ are always feasible (for $y=\zero^n$). Hence, since $\max\set{\one^\top y}{0\leq y\leq u,\ y\leq\one^n-u}=\sum_i\min(u_i,1-u_i)$, the set $\Y(x,u)$ is non-empty if and only if $M\sum_i\min(u_i,1-u_i)\geq b^\top u-V$, that is,
\begin{equation}
    \label{eq:recourseequivXcUcxYc}
    \Y(x,u)\neq\emptyset\Longleftrightarrow f(u)\leq V,\qquad\text{where } f(u)\equiv b^\top u-M\sum_i\min(u_i,1-u_i).
\end{equation}
Hence $I'$ is a YES-instance if and only if there exists $x\in\X$ with $g(x)\leq V$, where $g(x)\equiv\max_{u\in\U(x)}f(u)$. The function $f$ is convex, as each $-M\min(u_i,1-u_i)$ is convex, and does not depend on $x$; therefore $g$ depends on $x$ only through the feasible region $\U(x)$ and is monotone, in the sense that $\U(x')\subseteq\U(x)$ implies $g(x')\leq g(x)$.
 
We next show that for each binary $x\in X$, there exists a maximizer $u^*$ of $f$ that is binary.
Let $u^*$ be a maximizer of $f$ over the polytope $\U(x)$; as $f$ is convex, we may take $u^*$ to be an extreme point. On a \emph{blocked} coordinate ($x_i=1$, hence $u_i\in[0,1-\epsilon]$), the contribution $f_i(u_i)\equiv b_iu_i-M\min(u_i,1-u_i)$ is convex with $f_i(0)=0$ and $f_i(1-\epsilon)=b_i(1-\epsilon)-M\epsilon<0$, using $M>b_i(1-\epsilon)/\epsilon=b_i(2A-1)$; hence $f_i\leq0$ on $[0,1-\epsilon]$, and setting $u_i^*=0$ neither decreases $f$ nor violates feasibility. We may thus assume $u_i^*=0$ on every blocked coordinate, so that $u^*$ is an extreme point of the knapsack polytope $\set{u\in[0,1]^F}{b^\top u\leq B}$ over the \emph{free} coordinates $F=\set{i}{x_i=0}$. Such an extreme point has at most one fractional component $i$, in which case the knapsack constraint is tight and $u_i^*=k/b_i$ for some integer $0<k<b_i$ (so $b_i\geq2$); therefore $\min(u_i^*,1-u_i^*)\geq1/b_i$ and $u_i^*\leq(b_i-1)/b_i$. Comparing with the solution $u'$ identical to $u^*$ except that $u'_i=0$,
$$
f(u^*)-f(u')=b_iu_i^*-M\min(u_i^*,1-u_i^*)\leq(b_i-1)-\frac{M}{b_i}<0
$$
by $M>b_i(b_i-1)$, which contradicts the optimality of $u^*$. Hence $u^*$ is binary, so $\sum_i\min(u^*_i,1-u^*_i)=0$, and
\begin{equation}
    \label{eq:binaryequal}
g(x)=\max\set{b^\top u^*}{u^*\in\{0,1\}^n,\ b^\top u^*\leq B,\ x+u^*\leq(2-\epsilon)\one^n}=\Phi(x),
\end{equation}
the last equality holding because, for binary $x$ and $u^*$, the constraint $x+u^*\leq(2-\epsilon)\one^n$ is equivalent to $u^*\leq\one^n-x$.
 
Suppose first that~\eqref{eq:bilevelknapsack} is a YES-instance, and let $X$ be the corresponding certificate, so that $X\in\{0,1\}^n$, $a^\top X\leq A$, and $\Phi(X)\leq V$. Setting $x=X\in\X$, the previous paragraph yields $g(X)=\Phi(X)\leq V$, that is, $\forall u\in\U(X)\ \exists y\in\Y(X,u)$ by~\eqref{eq:recourseequivXcUcxYc}. Thus $I'$ is a YES-instance.

Consider now a YES-instance $I'$ and let $x\in\X$ satisfy $g(x)\leq V$, and recall that $x$ is not-necessarily binary. We round $x$ to the binary vector $X^*$ defined by $X^*_i=1$ if $x_i>1-\epsilon$ and $X^*_i=0$ otherwise, and we let $F=\set{i}{x_i>1-\epsilon}$. For a rounded-down coordinate ($x_i\leq1-\epsilon$), $\min(1,2-\epsilon-x_i)=\min(1,2-\epsilon-0)=1$, so the bound on $u_i$ stays $1$; for a rounded-up coordinate ($x_i>1-\epsilon$, $X^*_i=1>x_i$), the bound drops from $2-\epsilon-x_i\in(1-\epsilon,1)$ to $1-\epsilon$. Hence $\U(X^*)\subseteq\U(x)$, and monotonicity gives $g(X^*)\leq g(x)\leq V$. Moreover $X^*$ is feasible: as in the rounding argument of Theorem~\ref{thm:XcUdx},
$$
A\geq a^\top x\geq\sum_{i\in F}a_ix_i>(1-\epsilon)\sum_{i\in F}a_i=(1-\epsilon)\,a^\top X^*,
$$
so $a^\top X^*<\frac{A}{1-\epsilon}=\frac{2A^2}{2A-1}=A+\frac{A}{2A-1}<A+1$, and integrality yields $a^\top X^*\leq A$. By~\eqref{eq:binaryequal}, $\Phi(X^*)=g(X^*)\leq V$, so $X^*$ certifies~\eqref{eq:bilevelknapsack}.
 
The two implications show that $I'$ is a YES-instance if and only if~\eqref{eq:bilevelknapsack} is, and the construction is clearly polynomial. Since $\X$, $\U(x)$ and $\Y(x,u)$ are all contained in $[0,1]$-boxes (indeed $y\leq\min(u,\one^n-u)\leq\one^n$), the reduction produces bounded instances, which establishes the $\Sigma_2^p$-hardness of $\boundedtwostage(\Xc,\Ucx,\Yc)$, and therefore of $\twostage(\Xc,\Ucx,\Yc)$.
\end{proof}

Let us now study the bounded cases, and start with an easy case: given $x\in \X_d$ and $u\in\Udx$, then the bounded $\Yc(x,u)$ has facet complexity polynomially bounded, and so are its vertices, thanks to Fact~\ref{fact:inthull}. Hence, the problem $\boundedtwostage(\Xd,\Udx,\Yc)\in\Sigma_3^p$. We can refine this analysis by using an argument similar to that of Lemma~\ref{lem:Xtwo} for decision-dependent uncertainty sets. Specifically, given $x\in\X$ and $u\in\U$, there exists $y\in\Y(x,u)$ if and only if 
$(T(u)x)^\top\pi \leq 0$ for all $\pi\in \ext(\Pi)$. Hence, $\boundedtwostage(\X,\U(x),\Yc)$ can be cast as 
\begin{equation}
    \label{eq:twostageDD}
\exists x\in\Xd\ \forall (u,\pi)\in\ext(\conv(\U(x)))\times(\ext(\Pi)\cup\ray(\Pi)): (T(u)x)^\top\pi \leq 0?
\end{equation}
If $\X=\Xd$, all $x\in\Xd$ have polynomial encoding lengths, and the same holds for all $(u,\pi)\in\ext(\conv(\U(x)))\times(\ext(\Pi)\cup\ray(\Pi))$ for both $\U(x)=\Ucx$ and $\U(x)=\Udx$.
\begin{observation}
    \label{obs:XdUxYc}
    $\boundedtwostage(\Xd,\U,\Yc)$ is in $\Sigma_2^p$ for $\U\in\{\Ucx,\Udx\}$.
\end{observation}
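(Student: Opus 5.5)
The statement to prove is Observation~\ref{obs:XdUxYc}: $\boundedtwostage(\Xd,\U,\Yc)$ is in $\Sigma_2^p$ for $\U\in\{\Ucx,\Udx\}$. The plan is to certify the reformulation~\eqref{eq:twostageDD} directly as a $\Sigma_2^p$ sentence $\exists X\ \forall Y: P(I,X,Y)$. The two quantifier blocks are the following.

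\emph{Outer block.} We guess $x\in\Xd$. Since $x$ is integral and lies in $[0,R]^\nxt$, its encoding length is polynomial in $\langle I\rangle$.

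\emph{Inner block.} We quantify over the pairs $(u,\pi)$. Here $u$ is any vector of polynomial encoding length; the predicate checks that $u\in\U(x)$, which takes polynomial time by testing $B(x)u\leq b$, the bounds, and, for $\Udx$, integrality. Similarly, $\pi$ is any vector of polynomial encoding length, and the predicate checks $\pi\in\Pi$ and then evaluates $(T(u)x)^\top\pi\leq 0$.

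For correctness, we first use Farkas' lemma as in Lemma~\ref{lem:Xtwo}. For fixed $x$ and $u$, the set $\Y(x,u)$ is nonempty if and only if $(T(u)x)^\top\pi\leq 0$ for all $\pi\in\Pi$. Since $\Pi$ is a cone and the test is homogeneous in $\pi$, it suffices to check its extreme rays, together with the vertex $0$, which is trivial. We then argue that restricting $u$ to short vectors loses nothing, treating the two uncertainty sets separately.

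\emph{Case $\Udx$.} Every $u\in\Udx$ is an integer vector in $[0,R]^\nut$, so it is short. Hence no restriction on $u$ is actually made.

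\emph{Case $\Ucx$.} Once $x$ is fixed, $\Ucx$ is a polytope whose constraint data $B(x)$ and $b$ have polynomial encoding length, because $B$ is linear in $x$ with rational coefficients. The function $u\mapsto (T(u)x)^\top\pi$ is linear in $u$ for fixed $x$ and $\pi$, so any violation occurs at some vertex of $\Ucx$. Those vertices have polynomial encoding length by standard polyhedral theory, namely Fact~\ref{fact:inthull} applied to a purely continuous polyhedron.

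The one step needing care is the order of quantifiers. If a bad pair $(u,\pi)$ exists, we must exhibit a short bad pair. We do this in two moves:
\begin{itemize}
\item First, fix a violating $u$ and pick a violating extreme ray $\pi$ of $\Pi$. The cone $\Pi$ is independent of $x$ and $u$, so its rays are short.
\item Then, with this $\pi$ fixed, the map $u\mapsto (T(u)x)^\top\pi$ is linear, so we can move $u$ to a vertex of $\conv(\U(x))$ that is at least as violating. That vertex is short: for $\Ucx$ by the vertex bound above, and for $\Udx$ because every element is short.
\end{itemize}

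Together these show that a YES-instance has a short outer certificate $x$. For such an $x$, the universal statement over short pairs $(u,\pi)$ is equivalent to the statement over all pairs, and the predicate $P$ is polynomial-time checkable. This places the problem in $\Sigma_2^p$.
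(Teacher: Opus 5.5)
Your proposal is correct and follows essentially the paper's argument: both use Farkas' lemma to recast the problem as \eqref{eq:twostageDD}, observe that an integral $x\in[0,R]^{\nxt}$ is short, and note that for fixed $x$ the vertices of $\conv(\U(x))$ and the extreme rays of the $x$-independent cone $\Pi$ have polynomial encoding length. Your two-step argument (first pick a violating extreme ray of $\Pi$, then move $u$ to a vertex) simply makes explicit why the paper may quantify over the product $\ext(\conv(\U(x)))\times(\ext(\Pi)\cup\ray(\Pi))$.
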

We further leverage~\eqref{eq:twostageDD} in the next theorem for the case where $\X=\Xc$, for which we also need to prove that small certificates exist. Observe that in this case, the hardness follows from Theorem~\ref{thm:XcUdx}.
\begin{theorem}
    \label{thm:XcUdYcDD}
    $\boundedtwostage(\Xc,\Ud(x),\Yc)$ is in $\Sigma_2^p$.
\end{theorem}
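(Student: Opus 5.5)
We follow the same route as in the proof of Theorem~\ref{thm:XcUdx}, using the reformulation~\eqref{eq:twostageDD}. For a fixed $x$, the inner question $\forall (u,\pi)\in\ext(\conv(\Udx))\times(\ext(\Pi)\cup\ray(\Pi)):(T(u)x)^\top\pi\leq 0$ quantifies only over objects of polynomial encoding length. This holds because $u$ is integral and bounded by $R$, and because $\Pi$ depends neither on $x$ nor on $u$, so Fact~\ref{fact:inthull} bounds its extreme points and rays. Given $x$ of polynomial size, the predicate $(T(u)x)^\top\pi\leq 0$ is also decidable in polynomial time. Checking $x\in\Xc$ and $u\in\Udx$ is polynomial as well. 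Hence the statement reduces to a single claim: if $I$ is a YES-instance, then some outermost certificate $\hat x$ of polynomial encoding length exists.

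To construct $\hat x$, let $x^*$ be any YES certificate and freeze the uncertainty set at $\Ud(x^*)$. We define
$$
\Q\equiv\set{x\in\Xc}{(T(u)x)^\top\pi\leq 0,\;\forall u\in\Ud(x^*),\ \pi\in\ext(\Pi)\cup\ray(\Pi)}.
$$
This is a polytope, since $\Xc$ is bounded. Each of its inequalities is linear in $x$ because $u$ and $\pi$ are fixed, and each has polynomial encoding length: $u$ is a bounded integer vector, $\pi$ is controlled by Fact~\ref{fact:inthull}, and $T$ is linear in $u$. There are possibly exponentially many such inequalities. This is harmless, since Lemma~\ref{lem:Udx} only requires that each inequality have polynomial encoding length. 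By Farkas' lemma, as in Lemma~\ref{lem:Xtwo}, $x^*\in\Q$. Lemma~\ref{lem:Udx} applied to $\Q$ then yields $\hat x\in\Q$ of polynomial encoding length with $\Ud(\hat x)=\Ud(x^*)$.

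It remains to check that $\hat x$ is a YES certificate. Every $u\in\Ud(\hat x)=\Ud(x^*)$ satisfies $(T(u)\hat x)^\top\pi\leq0$ for all $\pi\in\Pi$, because $\hat x\in\Q$. Farkas' lemma then gives $\Yc(\hat x,u)\neq\emptyset$. If $\Ud(x^*)$ is empty, the certificate is trivial.

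The main obstacle is making sure Lemma~\ref{lem:Udx} applies as stated. Its proof uses the polytope obtained by adding the closed versions of the constraints that separate each $u\in\barUd(x^*)$. This polytope is described by possibly exponentially many inequalities, but each one has polynomial size, so its vertices have polynomial encoding length. Taking the centroid of $N+1$ affinely independent vertices keeps the encoding length polynomial, since $N\leq\nxt$. We must also check that the inequalities defining $\Q$ do not disturb the relative-interior argument. They are non-strict and are satisfied by $x^*$, while the strict separating inequalities remain non-implied equalities because $x^*$ satisfies them strictly. The argument of the lemma therefore carries over unchanged.
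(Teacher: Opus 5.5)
Your proposal is correct and follows essentially the same route as the paper. You freeze the scenario set at $\Ud(x^*)$, take the polytope of first-stage vectors satisfying the Farkas inequalities $(T(u)x)^\top\pi\le 0$ for those scenarios and all $\pi\in\ext(\Pi)\cup\ray(\Pi)$, apply Lemma~\ref{lem:Udx} to get a polynomially encodable $\hat x$ with $\Ud(\hat x)=\Ud(x^*)$, and conclude via~\eqref{eq:twostageDD}. The differences are cosmetic: you use all of $\Ud(x^*)$ rather than $\ext(\conv(\Ud(x^*)))$, which is equivalent because the inequalities are linear in $u$, and you spell out the final verification that $\hat x$ is a certificate, which the paper leaves implicit.
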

\begin{proof}
Let $x^*$ be an outer certificate for a YES-instance $\boundedtwostage$. Then, $x^*$ is in the set
   $$
   \Xtwostage_*\equiv\set{x\in\X}{
    (T(u)x)^\top \pi \leq 0,\;\forall w=(u,\pi)\in\W(x^*)
    },
   $$
where $\W(x^*)=\ext(\conv(\U(x^*)))\times(\ext(\Pi)\cup\ray(\Pi))$. We see that $x^*\in\set{\Xtwostage_*}{\Udx=\Ud(x^*)}$, so we apply Lemma~\ref{lem:Udx} to $\Q=\Xtwostage_*$ to obtain the certificate $\hat{x}$ of polynomial input length. Let $\X^*\subseteq\X$ be the corresponding subset of vectors of polynomial input length. The result then follows by considering the formulation~\eqref{eq:twostageDD} with $\X$ replaced by $\X^*$.
\end{proof}
Next, we consider the case of discrete recourse, leveraging once again Lemma~\ref{lem:Udx}.
\begin{theorem}
    \label{thm:XcUdYdxinSigma}
    $\boundedtwostage(\Xc,\Udx,\Yd)$ is in $\Sigma_3^p$.
\end{theorem}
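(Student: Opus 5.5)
The strategy combines the small-certificate argument of Theorem~\ref{thm:XcUdYdinSigma} with the treatment of decision-dependence through Lemma~\ref{lem:Udx}, as in Theorems~\ref{thm:XcUdx} and~\ref{thm:XcUdYcDD}. Since the instance is bounded, every $u\in\Udx$ and every $y\in\Yd(x,u)$ is an integer vector in $[0,R]$. Hence both have polynomial encoding length, and the two inner quantifiers $\forall u\ \exists y$ range over sets of polynomially sized objects. The predicate ``$x\in\Xc$, $u\in\Ud(x)$, $y\in\Yd(x,u)$'' can be checked in polynomial time once $x$ has polynomial encoding length. It therefore suffices to show that every YES-instance admits an outermost certificate $\hat x$ of polynomial encoding length.

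Let $x^*\in\Xc$ be an arbitrary outer certificate. For each $u\in\Ud(x^*)$, fix a recourse $y^*(u)\in\Yd(x^*,u)$; each $y^*(u)$ has polynomial encoding length by boundedness. Define the polytope
$$
\Q\equiv\set{x\in\Xc}{T(u)x+Wy^*(u)\leq 0,\;\forall u\in\Ud(x^*)}.
$$
Here $T(u)$ is linear in $u$, so each inequality has coefficients of polynomial encoding length. The number of inequalities may be exponential, but only their individual encoding length matters for Lemma~\ref{lem:Udx}. By construction $x^*\in\Q$, and $\Q$ is bounded because $\Xc\subseteq[0,R]^\nxt$. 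Applying Lemma~\ref{lem:Udx} yields a point $\hat x\in\Q$ of polynomial encoding length with $\Ud(\hat x)=\Ud(x^*)$.

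It remains to verify that $\hat x$ is a valid certificate. Take any $u\in\Ud(\hat x)$. Then $u\in\Ud(x^*)$, and $\hat x\in\Q$ gives $T(u)\hat x+Wy^*(u)\leq 0$, so $y^*(u)\in\Yd(\hat x,u)$. Thus $\hat x$ certifies the YES-instance, and the problem has the form $\exists \hat x\ \forall u\ \exists y$ with all three blocks polynomially bounded and a polynomial-time predicate. This places it in $\Sigma_3^p$.

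The main obstacle is the step where Lemma~\ref{lem:Udx} is invoked: we must guarantee that moving from $x^*$ to $\hat x$ does not enlarge the uncertainty set. Introducing scenarios outside $\Ud(x^*)$ would break the argument, since no recourse has been fixed for them. Lemma~\ref{lem:Udx} handles this through its relative-interior/centroid construction. The only things to check are that its hypothesis (each inequality of $\Q$ has polynomial encoding length in $\langle I\rangle$) holds, and that $\Q$ is a polytope. Both follow from boundedness and from the integrality of $u$ and $y^*(u)$.
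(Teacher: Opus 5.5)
Your proposal is correct and follows the paper's proof essentially step by step. Like the paper, you fix integer recourses $y^*(u)$ for every $u\in\Ud(x^*)$, form the polytope $\Q$ (the paper's $\Xtwostage_*$), and apply Lemma~\ref{lem:Udx} to obtain a polynomially encodable $\hat x$ with $\Ud(\hat x)=\Ud(x^*)$; you also spell out two points the paper leaves implicit, namely why $\hat x$ is still a valid certificate and why having exponentially many inequalities does not affect the lemma's hypothesis.
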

\begin{proof}
Assume there is a YES-answer to the non-emptyness of
\begin{equation}
    \nonumber
\set{(x,y)\in\R^\nxt\times\Z^{|\Ud|\times \nyt}}{Ax \leq b,\; T(u)x + W y(u) \leq 0, \; \forall u\in \Udx},
\end{equation}
and let us denote by $x^*,y^*$ this feasible point. Then, $x^*$ belongs to the polyhedron
\begin{equation}
    \nonumber
\Xtwostage_*\equiv\set{x\in\R^\nxt}{Ax \leq b,\; T(u)x + W y^*(u) \leq 0, \; \forall u\in \Ud(x^*)}.
\end{equation}
We see that $x^*\in\set{\Xtwostage_*}{\Udx=\Ud(x^*)}$, so we apply Lemma~\ref{lem:Udx} to $\Q=\Xtwostage_*$ to obtain the polynomial certificate~$\hat{x}$.
\end{proof}

We conclude the section by listing the open questions for \twostage.

\begin{question}
For each $\X\in\{\Xc,\Xd\}$ and $\U\in\{\Uc,\Ud\}$, is $\twostage(\X,\U,\Yd)$ in $\Sigma_3^p$? (also in the bounded case for $\Xc$ and $\Uc$)
\end{question}

\begin{question}
Are $\boundedtwostage(\Xc,\Uc(x),\Yc)$ and $\twostage(\Xc,\Uc(x),\Yc)$ in $\Sigma_2^p$?
\end{question}

\begin{question}
Is $\twostage(\Xc,\Ud(x),\Yc)$ in $\Sigma_2^p$?
\end{question}

\begin{question}
Is $\twostage(\Xc,\Uc(x),\Yd)$ in $\Sigma_3^p$?
\end{question}

\section{Complexity of \Kadapt{K}}\label{sec:kadapt}

We consider in this section problems \Kadapt{K} and \finiteadapt. It is well-known that \Kadapt{K} mediates between \static and \twostage. Indeed, \Kadapt{1} is equivalent to \static, while, in case $\U$ (or $\Y$) is finite, \Kadapt{|\U|} (or \Kadapt{|\Y|}) is equivalent to \twostage. Hence, \twostage reduces to \finiteadapt in this case.  However, no rigorous analysis of the complexity landscape of the latter problems exists showing how their complexities are connected. When considering \Kadapt{K} as an approximation of \twostage an important unanswered question is: Is \Kadapt{K} actually easier to solve than \twostage and can it be harder than \static? Furthermore, in general it is not known whether \finiteadapt can be harder than \twostage. We will answer these questions in this section by providing general reductions between the problems. The most interesting results of this section concern reductions between \boundedKadapt{\k} and \boundedKadapt{(\k+1)}, even in the reverse direction in case of $\Udx$. To keep the presentation of this section as simple as possible, we focus on the bounded versions in what follows, although some of our membership results presented at the end of the section readily extend to the unbounded versions. Any proven hardness results are valid for the unbounded version as well.

A first observation is that being equivalent to $\static(\Xd\times \Yd,\Ud)$, $\boundedKadapt{1}(\Xd,\Ud,\Yd)$ is $\Sigma_2^p$-complete, while the equivalence with $\twostage(\Xd,\Ud,\Yd)$ implies that $\boundedKadapt{|\Ud|}(\Xd,\Ud,\Yd)$ is $\Sigma_3^p$-complete. Hence, with increasing $K$ from polynomial functions to exponential functions of the input size, the problem jumps one level up in the polynomial hierarchy; in particular, in the latter case the predicate 
$$
\{y^1,\ldots,y^\k\}\cap\Y(x,u)\neq\emptyset
$$
cannot, in general, be computed in polynomial time leading to a reformulation of the problem with a third quantifyer
$$
\exists x\in \X, y^1,\ldots ,y^K\in \R^{\nyt}\ \forall u\in \U(x)
\ \exists k\in[\k]:
y^k\in \Y(x,u)?
$$

As a main result of this section we show that for certain choices of $(\X,\U,\Y)$ the following reduction chain holds 
\begin{equation}\label{eq:chain_reduction_kadapt}
\begin{aligned}
\boundedstatic & \le \boundedKadapt{K} \\
& \le \boundedtwostage \\
& \le \boundedfiniteadapt .
\end{aligned}
\end{equation}

We start with showing the monotonicity of the hardness of $\boundedKadapt{\k}(\X,\U,\Y)$ with respect to increasing $\k$, as long as it remains polynomial.

\begin{lemma}\label{lem:k_to_k+1}
Let $\X\in \{ \X_c,\X_d\}$, $\U\in \{\U_c,\U_c(x),\U_d,\U_d(x)\}$ and $\Y\in \{\Y_c,\Y_d\}$. Then for any $\k\in \mathbb N$ it holds
\[
\boundedKadapt{K}(\X,\U,\Y) \le \boundedKadapt{(K+1)}(\X,\U,\Y).
\]
\end{lemma}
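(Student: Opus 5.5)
The plan is a padding reduction. Given an instance $I$ of $\boundedKadapt{K}(\X,\U,\Y)$, I would build an instance $I'$ of $\boundedKadapt{(K+1)}(\X,\U',\Y')$ with one additional uncertain coordinate $v$ and one additional recourse coordinate $y_0$. The gadget should force exactly one of the $K+1$ recourse vectors to be spent on a special scenario that no original recourse vector can cover. The remaining $K$ vectors then face exactly the original instance.

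Concretely, I take $\U'(x)=\U(x)\times\{0\}\;\cup\;\{(\bar u,1)\}$ for some fixed $\bar u\in\U(x)$. I encode it by a single polyhedral or integer description in the same class as $\U$: adjoin $v\in[0,1]$ (resp. $v\in\{0,1\}$) and link it to $u$ with big-$M$ constraints of encoding length polynomial in $R$. In the continuous case I would instead try to collapse the fractional values of $v$, for instance by homogenizing $u$ with the factor $(1-v)$; this is where the gadget is still unresolved, as discussed at the end.

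The recourse set becomes $\Y'(x,(u,v))$ with $y_0\in[0,1]$ and the following constraints, all of which are linear in $y$ with constant $W$ and linear in $(u,v)$ through the constant component $x_{\nxt}=1$:
\begin{itemize}
\item $T(u)x+Wy\leq M y_0\one$, so the original constraints are relaxed only when $y_0=1$;
\item $y_0\geq v$;
\item $y_0\leq v$ in the discrete case, or $y_0\leq Mv$ in the continuous case.
\end{itemize}
Since $y_0\in[0,1]$, $\Y'$ stays bounded and of the same type as $\Y$; discreteness is preserved because $v$ and $y_0$ are integer when $\U=\Ud$ or $\Y=\Yd$, and $R$ is enlarged polynomially if needed.

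The correctness argument has two directions.

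\textbf{($I$ YES $\Rightarrow$ $I'$ YES).} Keep $x$ and $y^1,\ldots,y^K$ with $y_0=0$, and add $y^{K+1}=(\bar y,1)$ for any $\bar y$ in the box. This last vector covers the special scenario $v=1$ through the $M$-relaxation, where $M$ is chosen from $R$ and the entries of $T$ and $W$ to exceed every possible violation.

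\textbf{($I'$ YES $\Rightarrow$ $I$ YES).} At least one vector must have $y_0=1$ to cover $v=1$. No vector with $y_0=1$ covers a scenario with $v=0$, because $y_0\leq v$ (resp. $y_0\le Mv$) forces $y_0=0$ there. Hence at most $K$ vectors serve $\U(x)\times\{0\}$, and on those scenarios $y_0=0$ reduces the constraints to $T(u)x+Wy\le0$. So these at most $K$ vectors certify $I$, padded with copies if fewer than $K$ are used.

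The main obstacle is the continuous case $\U\in\{\Uc,\Ucx\}$. There $v$ ranges over $[0,1]$, and intermediate scenarios $(u,v)$ with $0<v<1$ must also be covered: with $y_0\le Mv$ and $y_0\ge v$, a fixed vector covers only $v\in[y_0/M,y_0]$. To fix this, I would first try to make every scenario with $v>0$ trivially coverable by the single vector $y_0=1$. One option is to scale the linking constraint as $y_0\le 1+M(v-\ldots)$. A more promising option is to exploit that, for a fixed $(x,y)$, the covered scenario set is a polyhedron in $(u,v)$: if the extreme points of $\conv(\U')$ all lie in $\U\times\{0\}\cup\{(\bar u,1)\}$, then covering only needs to be checked there, provided each vector's covered set is convex and contains the relevant segments. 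Making that convexity argument go through, without letting a $y_0=1$ vector cover $v=0$ scenarios, is the step I expect to require the most care.
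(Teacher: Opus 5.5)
\paragraph{Gap: continuous uncertainty.} In the discrete case your padding argument is essentially the paper's. The paper uses the extra uncertain coordinate $v$, the relaxation $T(u)x+Wy\le My_0\one$, and forces exactly one vector onto the scenarios with $v=1$. However, the lemma also covers continuous uncertainty $\U\in\{\Uc,\Ucx\}$, and there your reduction does not merely remain unfinished; it fails.

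With $y_0\ge v$ and $y_0\le Mv$:
\begin{itemize}
\item a vector with $y_0=0$ covers only scenarios with $v=0$;
\item a vector with $y_0>0$ covers only scenarios with $v\ge y_0/M$.
\end{itemize}
So for any $K+1$ vectors, every scenario with $0<v<\min\{y^k_0/M : y^k_0>0\}$ is uncovered (every scenario with $v>0$ if no such $k$ exists). Hence $I'$ is a NO-instance whatever $I$ is.

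Reshaping $\U'$ cannot fix this. Any polyhedral $\U'(x)$ that contains a scenario with $v=0$ and one with $v=1$ has all of $[0,1]$ in its $v$-projection, by convexity. Your extreme-point fallback fails as well. For a fixed vector the covered set is a polyhedron, but the union of $K+1\ge2$ such sets is not convex. So covering the extreme points of $\conv(\U')$ says nothing about covering $\U'$; this non-convexity is exactly what makes $K$-adaptability harder than the static problem. The culprit is the lower bound $y_0\ge v$, which bars the $y_0=0$ vectors from every scenario with $v>0$.

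\paragraph{The missing idea.} You need linking constraints under which the $v$-ranges served by the two groups of vectors tile $[0,1]$. The paper takes the plain product $\U'(x)=\U(x)\times[0,1]$ (resp.\ $\U(x)\times\{0,1\}$) and imposes
\[
2v-1\le y_0\le 2v,\qquad y_0\ge 0,\qquad T(u)x+Wy\le My_0\one .
\]
Then:
\begin{itemize}
\item $y_0=0$ is admissible exactly for $v\in[0,\tfrac12]$, where the remaining constraints are the original ones for $u\in\U(x)$;
\item $y_0=1$ is admissible for $v\in[\tfrac12,1]$, where the big-$M$ term makes it feasible for every $u$;
\item $v=0$ still forces $y_0=0$, and $v=1$ forces $y_0\ge1$.
\end{itemize}
Hence your reverse-direction argument goes through verbatim using only the scenarios $(u,0)$ and $(u,1)$.

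\paragraph{Drop the special point $\bar u$.} You should drop $\bar u$ even in the discrete case. Finding some $\bar u\in\Ud$ is itself an integer feasibility problem, and for $\Ucx$ or $\Udx$ no single $\bar u$ lies in every $\U(x)$. If $(\bar u,1)\notin\U'(x)$ at the chosen $x$, nothing forces one of the $K+1$ vectors to be spent on the special scenario, and the reverse direction breaks. Since the relaxed vector with $y_0=1$ covers every $u$ anyway, the product set is all you need.
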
 
\begin{proof}
Assume we have given an instance $I=\langle \X,\U,\Y\rangle$ to $\boundedKadapt{\k}(\X,\U,\Y)$.
We create an instance $I'=\langle \X',\U',\Y'\rangle$ of $\boundedKadapt{(K+1)}(\X,\U,\Y)$ as follows: We define $\X'=\X$ and the uncertainty set $\U'=\U\times \{ 0,1 \}$ in the discrete case and $\U'=\U\times [ 0,1 ]$ in the continuous case. For the second-stage feasible set we define for $u\in \U'$
\[
\Y'(x,u)\equiv\set{(y,y_{n_y+1})\in \mathbb R_{+}^{n_y+1}}{T(u_{[n_u]})x + Wy - My_{n_y+1}\one^{\my} \leq 0 , \ y_{n_y+1} \le 2u_{n_u+1}, \ y_{n_y+1} \ge 2u_{n_u+1} -1 },
\]
where
$u_{[n_u]}$ denotes the vector of the first $n_u$ components of $u$, and $$M=\max_{\substack{u\in\{ 0, R\}^{n_u} \\ x\in \{ 0,R\}^{n_x}}} \max_{i\in[m_y]} (T(u)x)_i,$$ which is polynomial in the input and can be calculated in polynomial time by analyzing the coefficients appearing in $T(u)$. In the discrete case we intersect $\Y'(x,u)$ with $\mathbb Z^{n_y+1}$. First note that $I'$ is indeed an instance of $\boundedKadapt{(K+1)}(\X,\U,\Y)$. We show that $I$ is a YES-instance if and only if $I'$ is a YES-instance. 

First assume $I$ a YES-instance, i.e., there exist $x\in \X$ and $y^1,\ldots ,y^K\in \mathbb R_+^{n_y+1}$ (or $\mathbb Z_+^{n_y+1}$ in the discrete case) such that for every $u\in \U$ there exists a $k\in [K]$ such that $y^k\in \Y(x,u)$. We construct the following solution for the instance of $I'$: $\hat{x} = x$, $\hat{y}^1=(y^1,0),\ldots ,\hat{y}^K = (y^K,0), \hat{y}^{K+1} = (\zero^{\nyt},1)$. We have to show that for any $u'\in \U'$ one of the latter $K+1$ solutions lies in $\Y'(x,u')$. Consider an arbitrary $u'\in \U'$. If $u'_{n_u+1}\ge \frac{1}{2}$ then $\hat{y}^{K+1}$ is feasible for constraints $y_{n_y+1} \ge 2u_{n_u+1} -1$ and $y_{n_y+1} \le 2u_{n_u+1}$. Due to the definition of $M$ it is also feasible for all other constraints. On the other hand, if $u'_{n_u+1}<\frac{1}{2}$ then all of the solutions $\hat{y}^1,\ldots ,\hat{y}^K$ are feasible for the constraints $y_{n_y+1} \ge 2u_{n_u+1} -1$ and $y_{n_y+1} \le 2u_{n_u+1}$. Furthermore, the rest of the constraints are equivalent to the original constraints and since $y^1,\ldots ,y^K$ are feasible for the original instance at least one of it must be feasible for every $u_{[n_u]}\in \U$. 

For the other implication assume now that the constructed instance $I'$ is a YES-instance, i.e., there exist $\hat{x}\in \X'$ and $\hat{y}^1,\ldots , \hat{y}^{K+1}\in \mathbb R^{n_y+1}$ such that for every $u'\in \U'$ there exists a $k\in [K+1]$ such that $\hat{y}^k\in \Y'(\hat{x},u')$. We construct a solution to $I$ as follows: We set $x=\hat{x}$ and $y^k = (\hat y_1^k,\ldots ,\hat y_{n_y}^k)$ for all $k\in [K+1]$ for which $\hat y_{n+1}^k=0$ and for all other $k$ we use duplicates. Note that $\hat y_{n_y+1}^k=0$ holds for at most $K$ of the solutions since at least one of the solutions must be feasible for scenario $(u,1)$ and hence $\hat y_{n_y+1}^k\ge 1$ must hold for that solution due to constraint $y_{n_y+1} \ge 2u_{n_u+1} -1$. Then we know that $\hat{x}\in \X$ and for every $u\in \U$ we have that at least one solution $\hat{y}^k$ must be feasible for the scenario $(u,0)$, which can only hold if $\hat{y}^k_{n_y+1} = 0$. Hence, it must hold $T(u)x + Wy^k \leq 0$. It follows that $y^k \in \Y(x,u)$ and that the instance of $I$ must be a YES-instance. 
\end{proof}

It follows from the latter lemma that we can reduce \Kadapt{1} to \Kadapt{K}, if $K$ is fixed or polynomial in the input, by applying a polynomial sequence of reductions
\[
\boundedKadapt{1}(\X,\U,\Y)\le \cdots \le \boundedKadapt{K}(\X,\U,\Y).
\]
We now prove that $\boundedKadapt{K}\le \boundedtwostage$ under slightly stricter assumptions.

\begin{lemma}\label{lem:k_to_2ro}
Let $\X\in \{ \X_c,\X_d\}$, $\U\in \{\U_c,\U_c(x),\U_d,\U_d(x)\}$ and $\Y = \Y_d$. Then for any $\k\in \mathbb N$ it holds
\[
\boundedKadapt{K}(\X,\U,\Y) \le \boundedtwostage(\X,\U,\Y).
\]
\end{lemma}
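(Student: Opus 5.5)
We reduce an instance $I=\langle\X,\U,\Y\rangle$ of $\boundedKadapt{K}(\X,\U,\Y)$ with $\Y=\Yd$ to an instance $I'$ of $\boundedtwostage(\X,\U,\Y)$. The idea is to move the $K$ here-and-now second-stage vectors $y^1,\ldots,y^K$ into the first stage. This is possible because $\Y=\Yd$ is bounded and integral, so each $y^k$ can be encoded by integer first-stage variables. The recourse then only has to \emph{select} one of them, and that selection is itself a discrete second-stage decision.

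\textbf{Construction.} Set $\X'\equiv\set{(x,z^1,\ldots,z^K)}{x\in\X,\ z^k\in[0,R]^{\nyt}\cap\Z^{\nyt}}$.
\begin{itemize}
\item If $\X=\Xd$, the set $\X'$ is again of type $\Xd$.
\item If $\X=\Xc$, the $z^k$ cannot be declared integer in the first stage. Instead, add integer recourse copies $w^k$ with constraints $w^k=z^k$, which forces $z^k$ to be integral, as in the reduction preceding Observation~\ref{obs:undecidableeasy}.
\end{itemize}
The uncertainty set stays $\U(x)$, with dependence on $x$ only, so its type is preserved. The recourse variables are $(v,\lambda,w)$ with $\lambda\in\{0,1\}^K$, $\sum_k\lambda_k=1$, $v\in[0,R]^{\nyt}\cap\Z^{\nyt}$ and $w^k=z^k$. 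They are subject to two families of constraints:
\begin{itemize}
\item the linking big-$M$ constraints $-R(1-\lambda_k)\one\le v-z^k\le R(1-\lambda_k)\one$ for each $k$, which force $v=z^k$ whenever $\lambda_k=1$;
\item the original constraints $T(u)x+Wv\le 0$.
\end{itemize}
To keep the format $T(u)x+Wy\le 0$ with zero right-hand side, we use the constant auxiliary component $x_{\nxt}=1$ and let the first-stage vector $(x,z)$ play the role of ``$x$''. The matrix $T(u)$ is extended by zero or constant blocks for the $z$ coordinates, so it remains linear in $u$.

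\textbf{Equivalence.} Suppose $I$ is a YES-instance with certificate $x,y^1,\ldots,y^K$. Take $z^k=y^k$. For each $u$, choose $\lambda=e_k$ for an index $k$ with $y^k\in\Y(x,u)$, and set $v=y^k$. Conversely, suppose $(x,z)$ certifies $I'$. For every $u$ there is a $k$ with $\lambda_k=1$, hence $v=z^k$ and $z^k\in\Yd(x,u)$; therefore $y^k\equiv z^k$ certifies $I$. All quantities are bounded by $R$ (or by $1$ for $\lambda$), so the instance remains bounded. The encoding grows by a factor $O(K)$, which is polynomial provided $K$ is polynomially bounded in the input; we assume this, as in Theorem~\ref{thm:k_k+1_reduction}.

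\textbf{Main obstacle.} The delicate step is bookkeeping rather than mathematics. We must check that the new constraints fit the exact syntactic form required by the paper's recourse set $T(u)x+Wy\le0$ with $T$ linear in $u$. We must also check that, when $\X=\Xc$, the integrality of the $z^k$ is recovered correctly. Here the issue is that a continuous $z^k$ equal to a fractional point must be excluded; this is done by the constraint $w^k=z^k$ with integer $w^k$, which makes every first-stage point with fractional $z^k$ infeasible for every $u$. If $\U(x)$ could be empty this would wrongly accept such points, so in that case we would intersect with a fixed nonempty scenario component, e.g. add a dummy coordinate $u_{\nut}=1$.
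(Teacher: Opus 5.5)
Your proposal is correct and follows essentially the paper's construction. Both arguments move the $K$ policies into the first stage, enforce their integrality when $\X=\Xc$ via equality with integer recourse copies, and let binary selectors in the recourse pick one copy through big-M constraints. Your variant, which imposes $T(u)x+Wv\le 0$ on a single selected vector $v$ linked to $z^k$ with the trivially valid constant $R$, is if anything slightly cleaner than the paper's relaxation $T(u)x+W\hat y^k\le M(1-\hat z_k)$.

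One small correction: your worry about an empty $\U(x)$ is unfounded, since at such an $x$ the original $K$-adaptability instance is also vacuously a YES-instance. The suggested dummy-coordinate fix would not help anyway, because adding a coordinate cannot make an empty set nonempty.
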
 
\begin{proof}
Assume we have given an instance
$I=(\X,\U, \Y)$
to $\boundedKadapt{K}(\X,\U,\Y)$. We create an instance
$I'=(\X',\U', \Y')$
of $\boundedtwostage(\X,\U,\Y)$ as follows: The first-stage feasible set is $(\hat{x},x^1,\ldots ,x^\k) \in \X' \equiv\X\times [0,R]^{n_y} \times \cdots \times [0,R]^{n_y}$ if $\X$ is continuous and $(\hat{x},x^1,\ldots ,x^\k) \in \X' \equiv\X\times [R]_0^{n_y} \times \cdots \times [R]_0^{n_y}$ in case $\X$ is integer. We define the uncertainty set $\U'=\U$ and for the second-stage feasible set we define for $u\in \U'$
\begin{align*}
\Y'(x,u)\equiv\Bigg\{(\hat{y}^1,\ldots ,\hat{y}^\k,\hat z)\in [R]_0^{K\times n_y} \times \{ 0,1\}^\k \ | \ &T(u)x + W\hat{y}^k \le M(1-\hat z_k)\one^{\my},\; \forall k\in [\k], \\
& \hat{y}^k = x^k \ \forall k\in [\k], \ \sum_{k\in[\k]} \hat z_k\ge 1\Bigg\},
\end{align*}
where $x = (\hat{x},x^1,\ldots ,x^\k)$, and $$M=\max_{\substack{u\in\{ 0, R\}^{n_u} \\ x\in \{ 0,R\}^{n_x} }} \max_{i\in[m_y]} (T(u)x)_i,$$ which is polynomial in the input and can be calculated in polynomial time by analyzing the coefficients appearing in $T(u)$. First note that $I'$ is indeed an instance of $\boundedtwostage(\X,\U,\Y)$. We show that $I$ is a YES-instance if and only if $I'$ is a YES-instance. 

First assume $I$ a YES-instance, i.e., there exist $x\in \X$ and $y^1,\ldots ,y^K\in \Y$ such that for every $u\in \U$ there exists a $k\in [\k]$ such that $y^k\in \Y(x,u)$. Consider the following solution for the instance $I'$: $\hat{x} = x$, $x^1=y^1,\ldots ,x^\k = y^\k$. We have to show that for any $u'\in \U'$ there exists a feasible solution in $\Y'((\hat{x},x^1,\ldots ,x^\k),u')$. Consider an arbitrary $u'\in \U'$. Then we know that for one of the solutions $y^{k_0}$ it must hold $ T(u)x + Wy^{k_0} \le 0$ since $I$ is a YES-instance. Hence, the solution $\hat{y}^{k_0} = y^{k_0}$, $\hat{y}^k = \bm{0}$ for $k\neq k_0$ and $\hat z_{k_0} = 1$ is a feasible second-stage solution for $I'$ due to the definition of the big-M value.

For the other implication assume now that the constructed instance $I'$ is a YES-instance, i.e., there exist $(\hat{x},x^1,\ldots ,x^\k)\in \X'$ such that for every $u'\in \U'$ there exists a feasible second-stage solution $(\hat{y}^1,\ldots ,\hat{y}^\k,\hat z)\in \Y'(x,u)$. We construct a solution to $I$ as follows: We set $x=\hat{x}$ and $y^k = x^k$ for all $k\in [\k]$. We have to show that for every $u\in \U$ there exists a $y^k$ such that $T(u)x + Wy^k \le 0$. Since $I'$ is a YES-instance there must exist a feasible second-stage solution $(\hat{y}^1,\ldots ,\hat{y}^\k,\hat z)$ and hence an index $k_0$ with $\hat z_{k_0}=1$. It follows from the other constraints that $T(u)x + W\hat{y}^{k_0} \le 0$ which proves the result.
\end{proof}
We are considering \finiteadapt now. Since $\k$ is part of the input this class  contains the bounded two-stage robust problem if either $\U$ or $\Y$ is a finite set. 
\begin{lemma}\label{lem:finite_adapt_harder_2ro}
If either $\U=\U_d$, $\U=\U_d(x)$ or $\Y=\Y_d$ then $$\boundedtwostage(\X,\U,\Y)\le \boundedfiniteadapt(\X,\U,\Y).$$ 
\end{lemma}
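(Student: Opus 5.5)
The reduction keeps the instance unchanged and only sets $\k$ to a value computable from the input. The key observation is the one recalled at the start of this section: if the set of possible uncertainty realizations, or the set of possible recourse vectors, is finite, then \Kadapt{\k} with $\k$ equal to its cardinality is equivalent to \twostage. In the bounded setting both finite sets have at most $(R+1)^{\nut}$ (resp.\ $(R+1)^{\nyt}$) elements. This number is exponential in the dimension, but its binary encoding has length $\nut\log_2(R+1)$ (resp.\ $\nyt\log_2(R+1)$), which is polynomial in $\langle I\rangle$. Since $\k$ is part of the input of \boundedfiniteadapt, given in binary, the map $I\mapsto(I,\k)$ with $\k=(R+1)^{\nut}$ if $\U\in\{\Ud,\Udx\}$, and $\k=(R+1)^{\nyt}$ if $\Y=\Yd$, is a polynomial-time many-one reduction. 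It remains to prove equivalence.

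Suppose $I$ is a YES-instance of \boundedtwostage with first-stage certificate $x$. For each $u\in\U(x)$, pick some $y(u)\in\Y(x,u)$.

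In the discrete-uncertainty case, $\U(x)\subseteq\{0,\dots,R\}^{\nut}$. Enumerate all these integer points $v_1,\dots,v_\k$. Set $y^k=y(v_k)$ if $v_k\in\U(x)$, and set $y^k$ arbitrarily (say $\zero$) otherwise. Then every $u\in\U(x)$ equals some $v_k$ with $y^k\in\Y(x,u)$.

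In the discrete-recourse case, let $y^1,\dots,y^\k$ enumerate all of $\{0,\dots,R\}^{\nyt}$. Then $\{y^1,\dots,y^\k\}\supseteq\Y(x,u)\neq\emptyset$ for every $u$. This case needs no dependence of the $y^k$ on $x$ at all.

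Both constructions handle decision-dependent $\Udx$, because the enumeration ranges over the whole bounded box rather than over $\U(x)$ itself.

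The converse is immediate and independent of $\k$. If $(x,y^1,\dots,y^\k)$ certifies the $\k$-adaptability instance, then for every $u\in\U(x)$ some $y^k\in\Y(x,u)$. Hence $\Y(x,u)\neq\emptyset$, and $x$ certifies \twostage.

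The only delicate points are bookkeeping. First, $\k$ must be written in binary so that it has polynomial encoding length; this is exactly where the problem differs from \boundedKadapt{\k}, where $\k$ is a polynomial function. Second, for \finiteadapt the $y^k$ live in $\R^{\nyt}$ or in the bounded box, and the padded choices (e.g.\ $\zero$) must be admissible; that is harmless, since membership in $\Y(x,u)$ is only required for one of them. I do not expect a real obstacle beyond stating these two points carefully.
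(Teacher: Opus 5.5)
Your proposal is correct and follows the paper's proof: the instance is left unchanged and $\k$ is set to $(R+1)^{\nut}$ (discrete uncertainty) or $(R+1)^{\nyt}$ (discrete recourse), whose binary encoding is polynomial, and equivalence holds because either one policy per scenario or an enumeration of all recourse vectors suffices. You spell out both directions of the equivalence explicitly, including the padding of unused policies and the decision-dependent case, whereas the paper only sketches them.
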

\begin{proof}
For any instance of $\boundedtwostage(\X,\U,\Y)$ we define an instance of $\boundedfiniteadapt{K}(\X,\U,\Y)$ where all sets are exactly the same and $\k \ge \min\{|\U|,|\Y|\}$ where $|\Y|=\max_{x\in \X, u\in \U} |\Y(x,u)|$. Note that such a value for $\k$ can be obtained by setting $\k=(R+1)^{n_u}$ if $\U$ is discrete or $\k=(R+1)^{n_y}$ if $\Y$ is discrete which has encoding length polynomial in the input. Then both instances are equivalent since the maximum amount of distinct second-stage policies needed in the inner exist-quantifier for $\boundedtwostage(\X,\U,\Y)$ is either one per scenario in $\U$ or the number of second-stage solutions $|\Y|$. Hence an instance of $\boundedfiniteadapt(\X,\U,\Y)$ is a YES-instance if and only if it is a YES-instance of $\boundedtwostage(\X,\U,\Y)$.
\end{proof}
The latter three lemmata lead directly to the following theorem.
\begin{theorem}\label{thm:k_k+1_reduction}
Let $\X\in \{ \X_c,\X_d\}$, $\U\in \{\U_c, \Uc(x), \U_d, \Ud(x)\}$ and $\Y\in \{\Y_c,\Y_d\}$. If $\k$ is polynomial in the input, then $$\boundedKadapt{1}(\X,\U,\Y) \le \boundedKadapt{K}(\X,\U,\Y).$$ 
Furthermore, if $\Y = \Y_d$, then 
$$\boundedKadapt{K}(\X,\U,\Y) \le \boundedtwostage(\X,\U,\Y).$$
Furthermore, if either $\U=\U_d$, $\U=\Ud(x)$ or $\Y=\Y_d$ then $$\boundedtwostage(\X,\U,\Y)\le \boundedfiniteadapt(\X,\U,\Y).$$ 
\end{theorem}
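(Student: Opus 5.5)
The theorem collects three reductions, and I plan to derive each one from the corresponding lemma just proved: Lemma~\ref{lem:k_to_k+1}, Lemma~\ref{lem:k_to_2ro} and Lemma~\ref{lem:finite_adapt_harder_2ro}. No new construction is needed. The work is to check that the compositions are still polynomial-time many-one reductions and that the hypotheses of each lemma are met.

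\emph{First reduction.} I apply Lemma~\ref{lem:k_to_k+1} iteratively to form the chain
\[
\boundedKadapt{1}(\X,\U,\Y)\le\boundedKadapt{2}(\X,\U,\Y)\le\cdots\le\boundedKadapt{\k}(\X,\U,\Y).
\]
This chain has $\k-1$ steps. Since $\k$ is polynomial in the input, composing the steps gives a polynomial-time reduction provided the instance size grows only polynomially along the chain. I expect this to be the main obstacle, because a naive composition of polynomial reductions can blow up.

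To control the growth, I will look at the construction in Lemma~\ref{lem:k_to_k+1}. Each step adds one uncertainty coordinate $u_{\nut+1}$ and one recourse coordinate $y_{\nyt+1}$. It also adds two constraints and one big-$M$ column. After $j$ steps, the big-$M$ value is computed from $T(u_{[\nut]})$ on the original coordinates, because the added coordinates do not enter $T$. So $M$ stays bounded by a fixed polynomial quantity, and the encoding grows additively by a polynomial amount per step. As a result, the total size after $\k-1$ steps is polynomial.

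The cleanest way to present this is probably to do all $\k-1$ augmentations at once. I would introduce $\k-1$ new binary or $[0,1]$ coordinates, and each added recourse coordinate would switch off the original constraints whenever its matching scenario bit is large. Then I would note that the correctness argument of the lemma goes through coordinate by coordinate. The radius $R$ must also be kept at least $1$ so that the new coordinates satisfy the bound.

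\emph{Second reduction.} This is exactly Lemma~\ref{lem:k_to_2ro} under the hypothesis $\Y=\Yd$. The constructed instance has first-stage dimension $\nxt+\k\nyt$ and $\k$ recourse blocks, so it is polynomial because $\k$ is polynomial.

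\emph{Third reduction.} This is Lemma~\ref{lem:finite_adapt_harder_2ro}. I will note that the chosen $\k=(R+1)^{\nut}$ or $(R+1)^{\nyt}$ has encoding length polynomial in the input. This matters because $\k$ is part of the input of \finiteadapt, so the reduction only has to write down $\k$ in binary, not run anything $\k$ times.
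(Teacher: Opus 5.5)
Your route is the paper's: iterate Lemma~\ref{lem:k_to_k+1} along $1\to 2\to\cdots\to\k$, then quote Lemmas~\ref{lem:k_to_2ro} and~\ref{lem:finite_adapt_harder_2ro} for the other two parts. The paper only asserts that the chain is polynomial, so your size check adds something, but its justification needs one correction. The appended coordinates do enter $T$. The rows $y_{\nyt+1}\le 2u_{\nut+1}$ and $y_{\nyt+1}\ge 2u_{\nut+1}-1$ are themselves rows of $T'(u')x+W'y'\le 0$ (via the constant coordinates), and the next application of the lemma relaxes them with its own big-$M$ column. If you kept the $M$ of the original rows, the next new policy (with $y_{\nyt+1}=0$, $y_{\nyt+2}=1$) violates the relaxed row $2u_{\nut+1}-1-y_{\nyt+1}-My_{\nyt+2}\le 0$ at $u_{\nut+1}=1$ whenever $M<1$. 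Recomputing $M$ at each step, as the lemma prescribes, fixes this. These rows contribute at most $2R^2$ over the box, so $M_j\le\max\{M_0,2R^2\}$ and your polynomial size bound survives.

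The one-shot variant you call the cleanest presentation does not work as described. Suppose you use $\U\times\{0,1\}^{\k-1}$ (or $\U\times[0,1]^{\k-1}$) and let each new recourse coordinate $z_j$ relax only the original rows. Then $z_j\le 2v_j$, $z_j\ge 2v_j-1$ force the support of $z$ to equal the set of $1$-bits of $v$, so every binary pattern $v$ needs its own policy. For instance, $v=(1,1,0,\ldots,0)$ is covered by none of $(y,\zero),(\zero,e_1),\ldots,(\zero,e_{\k-1})$. Since $2^{\k-1}>\k$ for $\k\ge 3$, the YES direction fails. The iterated construction avoids this only because each later switch also relaxes the earlier switch rows, so the switches are nested. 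There are two ways to repair the one-shot version. You can keep that nesting. Alternatively, you can restrict the new scenario component to $\{\zero,e_1,\ldots,e_{\k-1}\}$, or to the simplex $\{v\in[0,1]^{\k-1}:\one^\top v\le 1\}$ in the continuous case. With that restriction your coordinate-wise argument goes through: at most one $v_j$ exceeds $\tfrac12$, and the scenarios $(u,e_j)$ force $\k-1$ distinct switched policies. That leaves a single policy with $z=\zero$, which must be a static solution. Your second and third parts are fine as written.
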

\begin{proof}
To prove the first result, we use Lemma \ref{lem:k_to_k+1} to create a sequence of reductions
\[
\boundedKadapt{1}(\X,\U,\Y)\le \cdots \le \boundedKadapt{K}(\X,\U,\Y)
\]
which is polynomial in the input if $\k$ is independent of or polynomial in the input. The second and third result follows directly from Lemma \ref{lem:k_to_2ro} and Lemma \ref{lem:finite_adapt_harder_2ro}.
\end{proof}

The latter result shows a hardness hierarchy for all the four problems.

\begin{corollary}
If $\k$ is polynomial in the input then $\boundedKadapt{K}(\X,\U,\Y)$ is at least as hard as $\boundedstatic(\X\times \Y,\U)$,  $\boundedtwostage(\X,\U,\Y)$ is at least as hard as $\boundedKadapt{K}(\X,\U,\Y)$ if $\Y=\Y_d$ and $\boundedfiniteadapt(\X,\U,\Y)$ is at least as hard as $\boundedtwostage(\X,\U,\Y)$ if either $\U=\U_d$, $\U=\Ud(x)$ or $\Y=\Y_d$.
\end{corollary}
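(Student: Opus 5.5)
The statement is the corollary immediately above, and I would derive it directly from Theorem~\ref{thm:k_k+1_reduction}, together with the identification of \Kadapt{1} with \static over the product set $\X\times\Y$. The plan has three parts, one for each clause.

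\textbf{First clause.} I make precise that $\boundedKadapt{1}(\X,\U,\Y)$ and $\boundedstatic(\X\times\Y,\U)$ are the same problem up to a trivial re-encoding. With $K=1$, the question reads $\exists x\in\X,\ y^1\ \forall u\in\U(x):\ y^1\in\Y(x,u)$. The constraint $T(u)x+Wy^1\le 0$ is linear in $u$ for fixed $(x,y^1)$: each row is $u^\top H_\ell (x,y^1)$ for a suitable coupling matrix $H_\ell$, obtained from the linear map $T$ and, for the $W$ part, the auxiliary component $u_{\nut}=1$. The box and integrality constraints on $y^1$ are moved into the first-stage set $\X\times\Y$, where $\Y$ denotes the box $[0,R]^{\nyt}$, intersected with the integers in the discrete case. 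This gives a polynomial many-one reduction in both directions. Composing it with the first part of Theorem~\ref{thm:k_k+1_reduction}, which applies since $K$ is polynomial in the input, gives $\boundedstatic(\X\times\Y,\U)\le\boundedKadapt{K}(\X,\U,\Y)$.

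\textbf{Second and third clauses.} These are literally the second and third reductions of Theorem~\ref{thm:k_k+1_reduction}, which rest on Lemmas~\ref{lem:k_to_2ro} and~\ref{lem:finite_adapt_harder_2ro} under the same hypotheses: $\Y=\Y_d$ for the second, and $\U\in\{\U_d,\Ud(x)\}$ or $\Y=\Y_d$ for the third. I would simply restate that a many-one reduction $P\le P'$ means $P'$ is at least as hard as $P$.

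\textbf{Expected obstacle.} The main point needing care is the first clause. I must check that the static reformulation keeps the required sign and right-hand-side conventions: zero right-hand side, nonnegativity, and, in the decision-dependent case, dependence of $\U$ only on the $x$-part of the product variable. The last point is handled by letting $B(\cdot)$ ignore the $y$-coordinates. Everything else is bookkeeping.
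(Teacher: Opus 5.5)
Your proposal is correct and is essentially the paper's argument: the corollary restates the three reductions of Theorem~\ref{thm:k_k+1_reduction}, and its first clause rests on the identification $\boundedKadapt{1}(\X,\U,\Y)=\boundedstatic(\X\times\Y,\U)$, which the paper takes for granted. The one point to tighten is that the direction you actually need is $\boundedstatic(\X\times\Y,\U)\le\boundedKadapt{1}(\X,\U,\Y)$, and your claim of a re-encoding "in both directions" only covers it under a restriction: besides $B(\cdot)$ ignoring $y$ (which you note), the coupling matrices $H_\ell$ may pair the $y$-block only with the constant component of $u$, so $\boundedstatic(\X\times\Y,\U)$ must be read as this fixed-recourse subclass rather than as arbitrary static problems over the product set.
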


The latter corollary shows that the complexity of $\boundedKadapt{K}(\X,\U,\Y)$ lies somewhere ``between'' static and two-stage robust optimization. One consequential question is: can $\boundedKadapt{K}(\X,\U,\Y)$ be actually harder as $\boundedstatic(\X\times \Y,\U)$? The hardness of $\boundedKadapt{2}(\Xc,\Uc,\Yc)$ shown in \cite{bertsimas2010finite} illustrates that the answer to the question can be yes in some settings. Somewhat surprisingly however, we show in the following that for decision dependent uncertainty and if all sets are integer, $\boundedstatic(\X\times \Y,\U)$ is as hard as $\boundedKadapt{K}(\X,\U,\Y)$. Together with the previous results we can conclude that both problems are equivalent in terms of hardness.
\begin{lemma}
If $K$ is polynomial in the input, then $$\boundedKadapt{(K+1)}(\Xd,\Udx,\Yd) \le \boundedKadapt{K}(\Xd,\Udx,\Yd).$$ 
\end{lemma}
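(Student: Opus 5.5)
The plan is to absorb one of the $K+1$ second-stage decisions into the first stage. Decision-dependence, together with full integrality, then lets us delete from the uncertainty set exactly those scenarios this absorbed decision already covers. Let $I$ be an instance of $\boundedKadapt{(K+1)}(\Xd,\Udx,\Yd)$. We build $I'$ of $\boundedKadapt{K}(\Xd,\Udx,\Yd)$ as follows.

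\textbf{Construction.} The first-stage variable is $x'=(x,z)$, with $x\in\Xd$ and $z\in[0,R]^{\nyt}\cap\Z^{\nyt}$; the vector $z$ plays the role of $y^{K+1}$. Using the auxiliary components $x_{\nxt}=1$ and $u_{\nut}=1$, the quantity $T(u)x+Wz$ can be written as $(B'(x')u)_i$ for a matrix $B'$ that is linear in $x'$. The new uncertainty vector is $u'=(u,s)$ with $s\in\{0,1\}^{\my}$, and we set
\[
\U'(x')=\set{(u,s)\in\Ud(x)\times\{0,1\}^{\my}}{\one^\top s\geq 1,\ (T(u)x+Wz)_i\geq 1-M(1-s_i),\ \forall i\in[\my]}.
\]
Each constraint has the form $B''(x')u'\leq b''$ with $B''$ linear in $x'$, so $\U'(x')$ is of type $\Udx$. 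The recourse set is $\Y'(x',u')=\Yd(x,u)$, which ignores $z$ and $s$. The bounding radius is $R'=\max(R,1)$.

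\textbf{Integrality gap and big-$M$.} Before building $I'$, we scale each row of $T$ and $W$ by a common denominator. This makes $T(u)x+Wz$ integral for integral $x,u,z$, so ``constraint $i$ is violated'' is equivalent to $(T(u)x+Wz)_i\geq 1$. We take $M$ as in Lemma~\ref{lem:k_to_k+1}: a polynomially encodable bound on $|(T(u)x+Wz)_i|+1$ over the bounded box. With these choices, the projection of $\U'(x')$ onto $u$ is exactly $\set{u\in\Ud(x)}{z\notin\Yd(x,u)}$.

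\textbf{Equivalence.} Suppose $(x,y^1,\ldots,y^{K+1})$ certifies $I$. Then $x'=(x,y^{K+1})$ together with $y^1,\ldots,y^K$ certifies $I'$: every remaining scenario $u$ has $y^{K+1}\notin\Y(x,u)$, so some $y^k$ with $k\leq K$ lies in $\Y(x,u)$. Conversely, suppose $((x,z),y^1,\ldots,y^K)$ certifies $I'$. Then $(x,y^1,\ldots,y^K,z)$ certifies $I$: for each $u\in\Ud(x)$, either $z\in\Yd(x,u)$, or $u$ appears (for a suitable $s$) in $\U'(x')$ and is covered by some $y^k$. The case where $\U'(x')$ is empty is harmless, since the universal statement then holds vacuously on both sides. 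The size of $I'$ is polynomial in that of $I$. Chaining this reduction $K$ times, as in Theorem~\ref{thm:k_k+1_reduction}, gives $\boundedKadapt{K}\le\boundedKadapt{1}$ when $K$ is polynomial.

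\textbf{Main obstacle.} The delicate step is modelling the disjunction ``$z$ violates some row of $\Y(x,u)$'' inside a linear, decision-dependent description of the uncertainty set. The indicator variables $s$ must be placed on the uncertainty side, because the first stage cannot anticipate which row is violated. Two further points need checking. First, the bilinear term $T(u)x$ must fit the required form $B(x)u$. Second, the strict inequality must be replaced by $\geq 1$, and this replacement is valid only because $x$, $u$ and $z$ are all integral. This is exactly why the argument needs $\Xd$, $\Udx$ and $\Yd$, and why it fails in the continuous settings.
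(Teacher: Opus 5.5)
Your proof is correct and follows essentially the paper's approach. Both absorb $y^{K+1}$ into the first stage as an integral vector, put binary row-violation indicators into the decision-dependent uncertainty set, and rely on integrality (after scaling) to turn strict violation into $\geq 1$ with a polynomially bounded big-$M$. The only difference is in how the scenarios already covered by the absorbed policy are handled. The paper keeps every scenario, pins down the exact indicators $q'$ together with a switch $z'$, and uses a big-$M$ in $\Y'$ to make the recourse trivially feasible whenever $\hat x$ is feasible. You instead delete those scenarios from $\U'(x')$ by imposing $\one^\top s\geq 1$, which needs only the one-sided indicator constraint and leaves $\Y'$ unchanged; the price is that $\U'(x')$ may be empty, which you correctly note is harmless.
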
 
\begin{proof}
Assume we are given an instance
$I=(\X,\U, \Y)$
of $\boundedKadapt{(K+1)}(\Xd,\Udx,\Yd)$. After scaling we may assume that all data $T(u),W$ defining $\Y$ is integer. Together with the integrality of $x$, $u$ and $y$, this implies that $T(u)x+Wy$ is an integer vector for every feasible $x,u,y$. In particular, a policy $y$ is \emph{infeasible} for a scenario $u$ given $x$ if and only if $(T(u)x+Wy)_i\ge 1$ for at least one row $i\in[m_y]$.
 
We construct an instance
$I'=(\X',\U', \Y')$
of $\boundedKadapt{K}(\Xd,\Udx,\Yd)$ as follows. The first stage is enlarged by a solution $\hat x$, which will play the role of the discarded $(K+1)$-th policy:
$$(x',\hat x) \in \X' \equiv\X\times [R]_0^{n_y}.$$
The decision-dependent uncertainty set is
\begin{align*}
\U'((x',\hat x)) = \Bigg\{ (u',q', z')\in \U(x') \times \{ 0,1\}^{m_y} \times \{ 0,1\}: \ & T(u')x' + W\hat x \le  Mq', \\ 
& T(u')x' + W\hat x \ge \one^{\my} - M(\one^{\my}-q'), \\
& z' \le \sum_{i\in [m_y]} q'_i \le m_yz'\Bigg\},
\end{align*}
where the existence of a polynomially bounded big-$M$ value $M$ follows from the bounded nature of the problem. The second-stage feasible set for $(u',q',z')\in \U'(x)$ is defined as
\begin{align*}
\Y'((x',\hat x),(u',q',z'))\equiv\set{y'\in [R]_0^{n_y}}{T(u')x' + Wy' \le M(1-z')\one^{\my}},
\end{align*}
First note that $I'$ is indeed an instance of $\boundedKadapt{K}(\X,\U(x),\Y)$. We show that $I$ is a YES-instance if and only if $I'$ is a YES-instance. 

First assume $I$ a YES-instance, i.e., there exist $x\in \X$ and $y^1,\ldots ,y^{K+1}\in \Y$ such that for every $u\in \U(x)$ there exists a $k\in [\k+1]$ such that $y^k\in \Y(x,u)$. Consider the following solution for the instance $I'$: $x' = x$, $\hat x = y^{\k+1}$ and $y'^k = y^k$ for all $k=1,\ldots ,\k$. We have to show that for any $(u',q',z')\in \U'((x',\hat x))$ one of the solutions $y'^k$ is feasible in $\Y'((x',\hat x),(u',q',z'))$. Consider an arbitrary $(u',q',z')\in \U'((x',\hat x))$. We distinguish two cases according to the value of $z'$.

If $z'=0$, then due to the big-M constraint in $\Y'$ any second-stage solution $y'\in [R]_0^{n_y}$ is feasible, and hence any of the solutions $y'^k$ is feasible.

If $z'=1$, then the last constraint in $\U'$ ensures that $\sum_{i\in[m_y]}q'_i\ge 1$, so there is a row $i_0$ with $q'_{i_0}=1$, and the corresponding constraint of $\U'$ gives $\left(T(u')x'+W\hat x\right)_{i_0}\ge 1$. Hence $\hat x=y^{K+1}$ is infeasible for the scenario $u'$, i.e.\ $y^{K+1}\notin\Y(x,u')$. Since $u'\in\U(x')=\U(x)$ and $I$ is a YES-instance, there is a $k\in[K+1]$ with $y^k\in\Y(x,u')$; as $y^{K+1}$ is infeasible, this index satisfies $k\le K$. Because $z'=1$, the constraints of $\Y'$ reduce to $T(u')x'+Wy'\le 0$, and $y'^k=y^k$ satisfies $T(u')x'+Wy'^k=T(u')x+Wy^k\le 0$. Thus, $y'^k\in\Y'((x',\hat x),(u',q',z'))$.

For the other implication, assume that $I'$ is a YES-instance, i.e.\ there exist $(x',\hat x)\in\X'$ and $y'^1,\ldots,y'^K\in[R]_0^{n_y}$ such that for every $(u',q',z')\in\U'((x',\hat x))$ there is a $k\in[K]$ with $y'^k\in\Y'((x',\hat x),(u',q',z'))$. We construct a solution to $I$ by setting $x=x'$, $y^k=y'^k$ for $k\in[K]$, and $y^{K+1}=\hat x$. Let $u\in\U(x)=\U(x')$ be arbitrary. Define $q'\in\{0,1\}^{m_y}$ by $q'_i=1$ if $\left(T(u)x'+W\hat x\right)_i\ge 1$ and $q'_i=0$ otherwise, and set $z'=1$ if $q'\neq\zero^\my$ and $z'=0$ otherwise. Using the integrality of $T(u)x'+W\hat x$ and the choice of $M$, the triple $(u,q',z')$ satisfies all constraints of $\U'$. Hence $(u,q',z')\in\U'((x',\hat x))$.

If $\hat x$ is feasible for $u$, i.e.\ $T(u)x+W\hat x\le 0$, then $q'=\zero^\my$ and $z'=0$, and $y^{K+1}=\hat x\in\Y(x,u)$, so the scenario is covered by $y^{K+1}$. Otherwise $z'=1$, and since $I'$ is a YES-instance there is a $k\in[K]$ with $y'^k\in\Y'((x',\hat x),(u,q',z'))$. As $z'=1$ this means $T(u)x+Wy'^k\le 0$, so $y^k=y'^k\in\Y(x,u)$ covers the scenario. In either case there is a $k\in[K+1]$ with $y^k\in\Y(x,u)$. As $u\in\U(x)$ was arbitrary, $I$ is a YES-instance.
\end{proof}

It follows from the latter lemma that we can reduce \Kadapt{\k} to \Kadapt{1}, if $\k$ is fixed or polynomial in the input, by applying a polynomial sequence of reductions
\[
\boundedKadapt{\k}(\Xd,\Udx,\Yd)\le \cdots \le \boundedKadapt{1}(\Xd,\Udx,\Yd).
\]
This shows the following theorem.
\begin{theorem}
If $\k$ is polynomial in the input, then $$\boundedKadapt{\k}(\Xd,\Udx,\Yd) \le \boundedKadapt{1}(\Xd,\Udx,\Yd).$$ 
\end{theorem}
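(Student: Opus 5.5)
The plan is to chain the reduction of the preceding lemma, which shows $\boundedKadapt{(K'+1)}(\Xd,\Udx,\Yd) \le \boundedKadapt{K'}(\Xd,\Udx,\Yd)$, for $K' = \k-1, \k-2, \ldots, 1$. Each application is a polynomial-time many-one reduction, and many-one reductions compose. So the only real work is to check that composing $\k-1$ of them stays polynomial in the size of the original instance. Since $\k$ is bounded by a polynomial in the input, the number of steps is polynomial. It remains to control how the instance grows from one step to the next.

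First I will make the size growth of a single step explicit. One application adds $n_y$ first-stage integer variables $\hat x$ (bounded by $R$), $m_y+1$ binary uncertain variables $(q',z')$ with $2m_y+2$ linear constraints, and one big-$M$ term in each row of $\Y'$. Two features of the construction matter here.
\begin{itemize}
\item The recourse data do not grow: the new $\Y'$ has the same $n_y$ variables and the same $m_y$ rows, and its matrix $W$ and map $T$ are unchanged apart from the appended $x$-components and the $z'$-dependent right-hand side.
\item The radius $R$ stays the same, or can be taken as $\max(R,1)$.
\end{itemize}
Consequently, the increment in $\nxt$, $\nut$ and $\muu$ at each step is determined by the original $n_y$ and $m_y$, and does not depend on the current, already enlarged, instance.

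The main obstacle is the big-$M$ value. I will fix a single $M$ once, computed from the original data, and use it in every step. The quantity $T(u')x'+W\hat x$ involves only the original components of $x'$ and $u'$: the coupling $T$ never reads the appended $\hat x$-blocks or the $(q',z')$-components, which enter $T$ with zero coefficients. Hence the bound $M \ge \max |(T(u)x+Wy)_i|$ over the original $[0,R]$-boxes remains valid at every level. With this fixed $M$, the encoding length after $j$ steps is at most the original length plus $j$ times a polynomial in the original length. Over $\k-1$ steps this is polynomial, and so is the total running time.

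One subtlety to verify is that, after the first step, the shape of $\Y'$ carries the extra term $M(1-z')\one^\my$ and the uncertainty set carries extra constraints. The next application of the lemma must therefore treat $z'$ as part of $u$. This is allowed because $T(u)$ may be affine in $u$ through the auxiliary constant component. The next step's ``original'' rows are then $T(u')x' - M(1-z')\one + Wy$, whose magnitude is bounded by roughly $2M$. So I would use $M_j = 2^j M$, or, more carefully, a single $M$ large enough that the linear growth in $j$ stays polynomial in encoding length. I expect $\log M_j = O(j + \log M)$, which is polynomial, and this closes the argument.
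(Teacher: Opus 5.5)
Your plan is the paper's own argument: compose the preceding lemma's reduction $\k-1$ times. You go further than the paper by checking that the composed instance stays polynomial, which is genuinely needed, since polynomially many polynomial-time reductions need not compose to a polynomial one. One correction: the claim in your second paragraph that a single $M$ from the original data suffices because $T$ never reads $(q',z')$ is false. After one step the coupling contains $-M(1-z')\one^{\my}$, so rows at the next level can drop to about $-2M$, and the constraint $T(u')x'+W\hat x\geq \one^{\my}-M(\one^{\my}-q')$ is no longer satisfiable with the original $M$. Your final fix $M_j=2^jM$, with $\log M_j=O(j+\log M)$, is what actually closes the argument.
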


\begin{corollary}
$\boundedstatic(\Xd,\Udx,\Yd)$ and $\boundedKadapt{K}(\Xd,\Udx,\Yd)$ are equivalent in terms of hardness.
\end{corollary}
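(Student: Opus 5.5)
The corollary says that $\boundedstatic(\Xd\times\Yd,\Udx)$, which is $\boundedKadapt{1}(\Xd,\Udx,\Yd)$, and $\boundedKadapt{K}(\Xd,\Udx,\Yd)$ are polynomially reducible to each other. I will prove it by establishing the two reductions separately and relying entirely on results already stated.

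First I fix the identification. An instance of $\boundedKadapt{1}(\Xd,\Udx,\Yd)$ asks whether there exist $x\in\Xd$ and a single $y\in[R]_0^{\nyt}$ such that every $u\in\Ud(x)$ satisfies $T(u)x+Wy\le 0$. This is exactly a \boundedstatic instance with decision vector $(x,y)$ ranging over the discrete set $\Xd\times\Yd$ and uncertainty set $\Ud(x)$. Here $\Ud(x)$ depends only on the $x$-part, which is allowed by the linear dependence $B(\cdot)$.

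The uncertain constraints are $T(u)x+Wy\le0$, one row per $i\in[\my]$. Each row is bilinear in $u$ and $(x,y)$. Using the auxiliary constants $x_\nxt=1$ and $u_\nut=1$, the term $W_iy$ becomes $u_\nut W_i y$, so every row can be written as $u^\top H_i(x,y)\le 0$ with $L=\my$. The discrete sets and the bound $R$ carry over directly. The reverse direction is immediate: any \boundedstatic instance on $\Xd\times\Yd$ is a $\boundedKadapt{1}$ instance with $K=1$. This step is bookkeeping, and it also makes precise what $\boundedstatic(\Xd,\Udx,\Yd)$ means in the statement.

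For the reduction from \boundedstatic to $\boundedKadapt{K}$, I invoke the first part of Theorem~\ref{thm:k_k+1_reduction}, which comes from Lemma~\ref{lem:k_to_k+1} applied with $\X=\Xd$, $\U=\Udx$, $\Y=\Yd$. Chaining that lemma $K-1$ times gives $\boundedKadapt{1}\le\boundedKadapt{K}$. Since $K$ is polynomial, the composition is polynomial, provided each single step enlarges the instance only additively: one more coordinate and a big-$M$ of polynomial size.

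For the reduction from $\boundedKadapt{K}$ to \boundedstatic, I invoke the theorem immediately preceding the corollary, obtained by iterating the lemma $\boundedKadapt{(K+1)}\le\boundedKadapt{K}$ down to $K=1$. Composing with the identification above closes the loop.

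The one point that needs care is that the iterated reductions stay polynomial in size. In the downward lemma, each step adds $\nyt+\my+1$ variables and a big-$M$ value, and the value $R$ must be kept fixed or grow only polynomially. I would check that $M$ can be chosen as a bound on $|T(u)x+Wy|$ computed from the original data and $R$, so that it does not compound across the $K$ iterations. The new variables are binary or bounded by $R$, so later steps see comparable data. Under that check, $K$ polynomial gives a polynomial total blow-up, and the two reductions show that the problems are equivalent in hardness.
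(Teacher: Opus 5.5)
Your proposal is correct and follows the paper's own route. You identify $\boundedKadapt{1}(\Xd,\Udx,\Yd)$ with $\boundedstatic(\Xd\times\Yd,\Udx)$, then combine the upward chain of Lemma~\ref{lem:k_to_k+1} (the first part of Theorem~\ref{thm:k_k+1_reduction}) with the downward chain of the theorem immediately preceding the corollary.

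Your size check is more explicit than the paper's, but it does not need $M$ to stay fixed. Even if $M$ is recomputed for the current instance at each step, it only has to dominate the terms $-M(1-z')$ already added to the recourse rows, so it roughly doubles. Its bit-length and the instance size therefore grow only additively per step, and polynomial $K$ still yields a polynomial reduction.
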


We now turn to discussing membership results for $\boundedKadapt{\k}$.

\begin{theorem}\label{thm:k-adapt_in_sigma2p}
    \label{thm:UdxSigma}
$\boundedKadapt{K}(\X,\Ud(x),\Y)$ is in $\Sigma_2^p$ for $\X\in \{ \X_c,\X_d\}$ and $\Y\in \{\Y_c,\Y_d\}$.
\end{theorem}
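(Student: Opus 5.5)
The problem will be written in the $\exists\forall$ form
$$
\exists x\in\X,\ y^1,\ldots,y^\k\ \forall u\in\Udx:\ \exists k\in[\k]\ \text{with}\ T(u)x+Wy^k\leq 0,\ y^k\geq 0\ (\text{and } y^k\in\Z^{\nyt}\text{ if }\Y=\Yd).
$$
The inner predicate is polynomial-time decidable because $\k$ is polynomial: it suffices to check $\k$ linear systems. In the bounded setting every $u\in\Udx$ is an integer vector in $[0,R]^{\nut}$, so the universal quantifier already ranges over objects of polynomial encoding length. The proof therefore reduces to a single step: showing that whenever the instance is a YES-instance, there is an outer certificate $(\hat x,\hat y^1,\ldots,\hat y^\k)$ of polynomial encoding length.

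If $\X=\Xd$ and $\Y=\Yd$, the certificate is bounded and integral, so membership is immediate. In general, fix a YES-certificate $(x^*,y^{*1},\ldots,y^{*\k})$. For each $u\in\Ud(x^*)$, choose an index $k(u)\in[\k]$ with $y^{*k(u)}\in\Y(x^*,u)$. Then consider the set
$$
\Q\equiv\set{(x,y^1,\ldots,y^\k)\in\X\times[0,R]^{\k\nyt}}{T(u)x+Wy^{k(u)}\leq 0,\ \forall u\in\Ud(x^*)},
$$
with integrality on the discrete coordinates. Each inequality has encoding length polynomial in $\enc{I}$, because each $u$ is bounded and integral and $T$ is linear. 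Any point of $\Q$ that additionally satisfies $\Udx=\Ud(x^*)$ is again a YES-certificate, since every scenario remains covered by the same index. The point $(x^*,y^{*})$ lies in this subset.

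To extract a small point from this subset, I would apply Lemma~\ref{lem:Udx}, with the $y$-variables appended as extra coordinates on which $\Udx$ does not depend. The lemma is phrased for $x$ alone, but its proof carries over verbatim to $\Q$ in the lifted space: take the closure polytope, note that the strict inequalities $(B(x)u)_{\phi^*(u)}>b_{\phi^*(u)}$ for $u\in\barUd(x^*)$ are not implied equalities, and take a relative-interior point given as the centroid of affinely independent vertices.

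The main obstacle is the presence of integer coordinates when $\X=\Xd$ or $\Y=\Yd$, since $\Q$ is then not a polytope. I would handle this as follows. If $\X=\Xd$, the $x$-part is bounded and integral, so I fix $x=x^*$; then $\Udx=\Ud(x^*)$ holds trivially, and only a small $y$ in the resulting mixed-integer set is needed, which Fact~\ref{fact:milp} provides. If $\X=\Xc$ and $\Y=\Yd$, I fix the integral $y^{*k}$ (polynomial by boundedness) and apply Lemma~\ref{lem:Udx} to the polytope in $x$ alone. The case $\Xc,\Yc$ is covered by the lifted lemma. Combining these cases gives a polynomial outer certificate, and the problem lies in $\Sigma_2^p$.
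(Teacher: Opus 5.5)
Your proposal is correct and follows essentially the same route as the paper: fix the scenario-to-policy assignment induced by a YES-certificate. When $\X=\Xd$, fix $x^*$ and obtain a small $y$ from Fact~\ref{fact:milp}; when $\X=\Xc$, $\Y=\Yd$, fix the integral $y^*$ and apply Lemma~\ref{lem:Udx} in $x$. The only deviation is the case $(\Xc,\Yc)$: you rerun the proof of Lemma~\ref{lem:Udx} directly in the lifted $(x,y)$-space, whereas the paper first projects $\S$ onto the $x$-space, applies the lemma there and then recovers a small $y$. Your variant is valid, since the strict inequalities involve only $x$ and hold strictly at $(x^*,y^*)$, and the closure is a polytope of polynomial facet complexity once $y$ is bounded by $R$; it also avoids the projection step.
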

\begin{proof}
Assume $I$ is a YES-instance to $\boundedKadapt{K}(\X,\Ud,\Y)$ and let $\hat x, \hat y^1,\ldots,\hat y^\k$ be an outer certificate. Then the outer certificate induces a partition of the finite scenario set $\Ud = \bigcup_{k=1}^{\k} \U_k$ where
\[
\U_k = \set{ u\in \Ud}{\hat y^k\in\Y(\hat x,u) }.
\]
Hence, every point in the set
\[
\S\equiv\set{(x,y)\in\X\times\R^{\nyt\times \k}}{y^k\in\Y(x,u),\; \forall k\in[\k],u\in\U_k}
\]
is an outer certificate. Since $\Ud$ is bounded and discrete every $u$ has polynomial encoding length and hence $\S$ is described by a finite set of constraints of  polynomial encoding length. Hence, if either $x$ or $y$ is continuous and since $\S\neq \emptyset$ there exists a point in $\S$ with polynomial encoding length which proves the result. 

For the decision dependent uncertainty set $\Ud(x)$ changing the $x$-certificate might change the scenario set as well. In this case we can conclude that every point in
\[
\mathcal Q\equiv\set{(x,y)\in \S}{\Ud(x)=\Ud(\hat x)}
\] 
is an outer certificate of our instance. In case $\X=\Xd$ we know that $\hat x$ has polynomial encoding length and hence any scenario in $\Ud(\hat x)$ has polynomial encoding length and we can directly apply the proof of the decision-independent case above. Otherwise, and if $\Y=\Yd$ we know that $\hat y$ has polynomial encoding length and we can apply Lemma \ref{lem:Udx} to the $x$-space and conclude that there exists a point in $\mathcal Q$ which has polynomial encoding length. Finally, if $\X = \Xc$ and $\Y=\Yc$ we first consider the projection of $\mathcal S$ to the $x$-space which can be described by a finite set of inequalities of polynomial encoding length (for instance, using Farkas' lemma, considering only the extreme rays of the resulting cone and relying on Fact~\ref{fact:inthull}). Applying Lemma \ref{lem:Udx} to this set shows that a polynomially encodable $\tilde x$ exists which is contained in the projection and for which $\Ud(\tilde x)=\Ud(\hat x)$. Then again since $\mathcal S$ is a rational polyhedron of polynomial encoding length there must exist a $y\in \set{y\in\mathbb R^{n_y\times\k}}{(\tilde x,y)\in \mathcal S}$ of polynomial encoding length which proves the result.
\end{proof}

Next we show that for discrete first-stage variables and continuous uncertainty set the $\k$-adaptability problem is in $\NP$.
\begin{theorem}
    \label{thm:kadapt_inNP}
$\boundedKadapt{K}(\Xd,\Uc,\Y)$ is in $\NP$ for $\Y\in \{\Y_c,\Y_d\}$.
\end{theorem}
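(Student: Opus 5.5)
The plan is a guess-and-verify argument. The guess is an outer certificate $(\hat x,\hat y^1,\ldots,\hat y^\k)$ together with a witness that the remaining universal statement holds; the verification is a polynomial number of linear programs. I should say up front that I have not found an argument for the step that keeps this certificate polynomial (the last paragraph), so the plan may fail there.

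\emph{Step 1: the outer certificate is small.} Since $\X=\Xd$ is bounded by $R$, any $\hat x$ has polynomial encoding length. If $\Y=\Yd$, the same holds for each $\hat y^k$. If $\Y=\Yc$, I fix $\hat x$ and the feasible certificate. I then argue, as in the proof of Theorem~\ref{thm:UdxSigma}, that the set of admissible $(y^1,\ldots,y^\k)$ can be replaced by one described through polynomially many linear inequalities. A vertex of that set then has polynomial encoding length by Fact~\ref{fact:milp}. This is where the continuity of $\Uc$ needs care, since there is no finite partition into $\U_k$ to freeze as there was in Theorem~\ref{thm:UdxSigma}.

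\emph{Step 2: restate the remaining condition.} For fixed $(\hat x,\hat y^1,\ldots,\hat y^\k)$, the map $T(u)$ is linear in $u$. Hence each set
\[
Q_k\equiv\set{u}{T(u)\hat x+W\hat y^k\leq 0}
\]
is a polyhedron in $u$ with polynomial facet complexity. The instance is a YES-instance exactly when $\Uc\subseteq\bigcup_{k\in[\k]}Q_k$. Testing a single containment $\Uc\cap C\subseteq Q_k$, for a given polyhedron $C$, reduces to $\my$ linear programs $\max\{(T(u)\hat x+W\hat y^k)_i : u\in\Uc\cap C\}\leq 0$ and is polynomial.

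\emph{Step 3: witness the covering (main obstacle).} To witness $\Uc\subseteq\bigcup_k Q_k$ inside an $\NP$ certificate, I would add a hyperplane decision tree to the certificate. Each internal node carries a splitting inequality $c^\top u\leq d$, chosen among the rows defining the $Q_k$, and each leaf carries an index $k$. Its leaves cover $\Uc$ by construction, and the verifier checks each leaf cell $C$ by the linear programs of Step 2.

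The difficulty is bounding the size of this tree. I would first try to show that a minimal tree has polynomially many leaves. The argument would use that at most $\k$ (polynomially many) polyhedra are involved: each leaf can be charged to a distinct region of $\Uc$ on which the choice of covering index changes, and one would hope to limit such regions by $\k$ and the dimension.

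I expect this counting to be the crux, and it may simply be false. Checking a covering of a polytope by a union of polyhedra is in general only in $\coNP$ (its complement guesses $u$ and one violated row per $k$). If the leaf count cannot be bounded, Steps 1 and 2 only place the problem in $\Sigma_2^p$, not $\NP$.
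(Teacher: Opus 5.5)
Your proposal has a genuine gap, and you locate it yourself: as written it only yields $\Sigma_2^p$. However, the obstacle you chase, bounding a hyperplane decision tree, is not the real one, and no tree is needed. You already wrote down the complement of the covering condition $\Uc\subseteq\bigcup_{k\in[\k]}Q_k$: there exist $u\in\Uc$ and one violated row per $k$, i.e.\ a map $\phi:[\k]\to[\my]$ with $(T(u)\hat x+W\hat y^k)_{\phi(k)}>0$ for all $k\in[\k]$. The missing idea is that the verifier need not \emph{guess} $\phi$. For a fixed $\phi$, the existence of such a $u$ is decided by the single linear program $\max\{\delta : u\in\Uc,\ \delta\le(T(u)\hat x+W\hat y^k)_{\phi(k)}\ \forall k\in[\k]\}$, whose value is positive if and only if such a $u$ exists. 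Since $\k$ is a constant, there are only $\my^{\k}$ maps, and the verifier can enumerate them all. So the covering test is in $\P$, not merely in $\coNP$. For $\Y=\Yd$ this already finishes the proof, because $(\hat x,\hat y^1,\ldots,\hat y^\k)$ is integral and bounded by $R$. The difficulty you describe is real only if $\k$ grows with the input; then the enumeration is exponential and this argument, which is the paper's, no longer applies.

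For $\Y=\Yc$, your Step~1 is left unresolved, and the direct analogue of freezing the partition in Theorem~\ref{thm:UdxSigma} fails. For fixed $\hat x$, the set of admissible $(y^1,\ldots,y^\k)$ need not be convex: already for $\Uc=[0,1]$, $\k=2$ and rows $u-y^k\le 0$, it is $\{y: \max(y^1,y^2)\ge 1\}$. What replaces the partition is LP duality applied to the per-$\phi$ programs above. Write $(T(u)\hat x)_i=h_i(\hat x)^\top u$. The dual feasible region $D_{\phi,\hat x}=\{(\lambda,\mu)\ge 0 : B^\top\mu\ge\sum_k\lambda_k h_{\phi(k)}(\hat x),\ \sum_k\lambda_k=1\}$ depends on $\hat x$ but not on $y$. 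It has polynomial encoding length because $\hat x$ is integral and bounded, and $y$ enters only the dual objective $b^\top\mu+\sum_k\lambda_k(Wy^k)_{\phi(k)}$. For the true certificate $\hat y$, take for each $\phi$ an optimal dual vertex $(\lambda^*_\phi,\mu^*_\phi)$. Weak duality then shows that every $y\in[0,R]^{\k\nyt}$ with $b^\top\mu^*_\phi+\sum_k(\lambda^*_\phi)_k(Wy^k)_{\phi(k)}\le 0$ for all $\phi$ is again a valid certificate. This polytope contains $\hat y$ and is cut out by the box constraints and $\my^{\k}$ inequalities of polynomial size, so it has a vertex of polynomial encoding length. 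Equivalently, as in the paper, you can put $\hat x$ and these dual vertices into the certificate and let the verifier check dual feasibility and solve this linear feasibility problem in $y$.
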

\begin{proof}
We start with $\Y=\Yd$. Assume $I$ is a YES-instance to $\boundedKadapt{K}(\Xd,\Uc,\Yd)$ and $(\hat x,\hat y^1,\ldots,\hat y^K)$ an outer certificate, which has polynomial encoding length as $\Xd$ and $\Yd$ are discrete and bounded. The outer certificate fails to be a YES-certificate iff there exists a mapping $\phi: [\k]\to [\my]$ and a $u\in \Uc$ such that $(T(u)\hat x+W\hat y^k)_{\phi(k)}> 0$ for all $k\in [\k]$. Hence, conversely to verify that it is a YES-certificate we have to show for every mapping $\phi: [\k]\to [\my]$ that the optimal value of the problem $$\max \set{\delta}{(\delta,u)\in \mathbb R\times \Uc, \ (T(u)\hat x+W\hat y^k)_{\phi(k)}\ge\delta\ \forall k\in [\k]}$$ is non-positive. Since $\k$ is fixed the number of possible mappings is polynomial and for each we have to solve an LP described by inequalities of polynomial encoding length which can be done in polynomial time as well.

For the case $\Y=\Yc$ the latter idea does not work, since the $y$-certificate is not necessarily of polynomial encoding length. We may also assume, using the auxiliary-coordinate convention of the paper, that each row of the recourse system is of the form $h_i(x)^\top u+(Wy^k)_i\equiv(T(u) x+W y^k)_{i} \leq 0$, where the vector $h_i(x)$ collects the coefficients of the bilinear form $(T(u)x)_i$ and is a linear function of $x$. 
 
Let $I$ be a YES-instance with outer certificate $(\hat x,\hat y^1,\ldots,\hat y^K)$. We may assume $\hat y^k\in[0,R]^\nyt$ for all $k$. As in the previous part of the proof the certificate can be verified to be of a YES-instance if for every mapping $\phi: [\k]\to [\my]$ we have
\[
 \max\set{\delta}{(\delta,u)\in \mathbb R\times \mathbb R_+^{n_u}, \ B u\leq b,\ \delta\leq h_{\phi(k)}(\hat x)^\top u+(W\hat y^k)_{\phi(k)}\ \forall k\in[\k]} \le 0.
\]
Since $\Uc$ is nonempty and bounded, the latter linear program is feasible and bounded, so by strong duality it has the same value as
\begin{equation}
\label{eq:dualLP}
\min\set{b^\top\mu+\sum_{k\in[\k]}\lambda_k\,(W\hat y^k)_{\phi(k)}}{(\lambda,\mu)\in D_{\phi,\hat x}},
\end{equation}
where
\[
D_{\phi,\hat x} \equiv \set{(\lambda,\mu)\ge \zero}{B^\top\mu\ge\sum_{k\in[K]}\lambda_k h_{\phi(k)}(\hat x),\ \textstyle\sum_k\lambda_k=1},
\]
and the minimum is attained. The dual feasible region $D_{\phi,\hat x}$ depends on $\hat x$ and the mapping $\phi$ but not on $\hat y$. Since $\hat x$ is integral and bounded by $R$, each $h_{\phi(k)}(\hat x)$ has polynomial encoding length, so $D_{\phi,\hat x}$ is a rational polyhedron of polynomial encoding length, and the minimum in~\eqref{eq:dualLP} is attained at a vertex $(\lambda_{\phi,\hat x}^*,\mu_{\phi,\hat x}^*)$ of polynomial encoding length.
 
Finally, to verify that we have a YES-instance we have to check for every mapping $\phi: [\k]\to [\my]$ that (i) $\hat x\in\Xd$, (ii) $(\lambda_{\phi,\hat x}^*,\mu_{\phi,\hat x}^*)\in D_{\phi,\hat x}$ and (iii) the linear program
\begin{equation}
\label{eq:recoveryLP}
\set{(y^1,\ldots,y^K)\in[0,R]^{K\nyt}_+}{\ b^\top\mu_{\phi,\hat x}^* + \sum_{k\in[K]}(\lambda^*_{\phi,\hat x})_k (Wy^k)_{\phi(k)}\le 0}
\end{equation}
is feasible which can all be done in polynomial time. Since again the number of mappings $\phi$ is polynomial due to fixed $\k$ the result follows.
\end{proof}

Finally, we show that for some setups the reductions shown in this section may be loose in the sense that the reduced problem can be located on a lower level. First, as mentioned before, it holds $\boundedstatic(\Xc,\Uc,\Yc) < \boundedKadapt{K}(\Xd,\Uc,\Yc)$ since $\boundedstatic(\Xc,\Uc,\Yc)$ is in $\P$, but $\boundedKadapt{K}(\Xc,\Uc,\Yc)$ is $\NP$-hard as shown in \cite{bertsimas2010finite}. Second, it holds  $\boundedKadapt{K}(\Xd,\Ud,\Yd) < \boundedtwostage(\Xd,\Ud,\Yd)$, since from Theorem \ref{thm:k_k+1_reduction}, the $\Sigma_2^p$-hardness of $\boundedstatic{K}(\Xd,\Ud,\Yd)$ together with Theorem \ref{thm:k-adapt_in_sigma2p} it follows that $\boundedKadapt{K}(\Xd,\Ud,\Yd)$ is $\Sigma_2^p$-complete, while we show in Section \ref{sec:twostage} that $\boundedtwostage(\Xd,\Ud,\Yd)$ is $\Sigma_3^p$-complete. It remains an open question whether $\boundedfiniteadapt$ can be located at least a level higher than $\boundedtwostage$ for certain setups.

In Table \ref{tab:k-adapt} we show an overview over all complexity results of $\boundedKadapt{\k}$. The hardness results follow directly from the hardness results for $\boundedstatic$ and Theorem \ref{thm:k_k+1_reduction}. Note that in case $(\Xc,\Yd )$ or $(\Xd,\Yc )$ we can reduce both, $\boundedstatic(\Xd,\U)$ and $\boundedstatic(\Xc,\U)$ to $\boundedKadapt{\k}$ and hence use the problem which is harder.

\begin{table}[h!]\renewcommand{\arraystretch}{1.5}
\centering
 
\begin{tabular}{cc|c|c}
 & & $\Xc$ & $\Xd$ \\
\hline
\multirow{4}{*}{$\Yc$}
& $\Uc$ & {$\bm\NP$-\bf{H}},~\cite{bertsimas2010finite}
& $\NP$-C,~Thm.~\ref{thm:k_k+1_reduction}, \ref{thm:kadapt_inNP}, \cite{kouvelis2013robust} \\
\cline{2-4}
& $\Ud$ & \makecell{{$\bm\coNP$-\bf H}, Thm.~\ref{thm:static_coNPC}, \ref{thm:k_k+1_reduction} \\ $\mathit{\Sigma_2^p}$}
& $\Sigma_2^p$-C, Thm.~\ref{thm:k_k+1_reduction}, \ref{thm:k-adapt_in_sigma2p}, \cite{claus2020note} \\
\cline{2-4}
& $\Ucx$ & {$\bm\exists\mathbb{R}$-\bf H},~Thm.~\ref{thm:k_k+1_reduction}, Cor.~\ref{cor:XcUcx}
& {$\bm\exists\mathbb{R}$-\bf H},~Thm.~\ref{thm:k_k+1_reduction}, Cor.~\ref{cor:XcUcx} \\
\cline{2-4}
& $\Udx$ & $\Sigma_2^p$-C, Thm.~\ref{thm:k_k+1_reduction}, \ref{thm:k-adapt_in_sigma2p}, \ref{thm:XcUdx}
& $\Sigma_2^p$-C, Thm.~\ref{thm:k_k+1_reduction}, \ref{thm:k-adapt_in_sigma2p}, \cite{claus2020note} \\
\hline
\multirow{4}{*}{$\Yd$}
& $\Uc$ & {$\bm\NP$-\bf{H}},~Thm.~\ref{thm:k_k+1_reduction}, \cite{kouvelis2013robust}
& $\NP$-C,~Thm.~\ref{thm:k_k+1_reduction}, \ref{thm:kadapt_inNP}, \cite{kouvelis2013robust} \\
\cline{2-4}
& $\Ud$ & $\Sigma_2^p$-C, Thm.~\ref{thm:k_k+1_reduction}, \ref{thm:k-adapt_in_sigma2p}, \cite{claus2020note}
& $\Sigma_2^p$-C, Thm.~\ref{thm:k_k+1_reduction}, \ref{thm:k-adapt_in_sigma2p}, \cite{claus2020note} \\
\cline{2-4}
& $\Ucx$ & {$\bm\exists\mathbb{R}$-\bf H},~Thm.~\ref{thm:k_k+1_reduction}, Cor.~\ref{cor:XcUcx}
& {$\bm\NP$-\bf{H}},~Thm.~\ref{thm:k_k+1_reduction}, \cite{kouvelis2013robust} \\
\cline{2-4}
& $\Udx$ & $\Sigma_2^p$-C, Thm.~\ref{thm:k_k+1_reduction}, \ref{thm:k-adapt_in_sigma2p}, \cite{claus2020note}
& $\Sigma_2^p$-C, Thm.~\ref{thm:k_k+1_reduction}, \ref{thm:k-adapt_in_sigma2p}, \cite{claus2020note} \\
\end{tabular}
\caption{Complexity of \boundedKadapt{\k}. Hardness only in bold, membership only in italic.\label{tab:k-adapt}}
\end{table}

We conclude the section by listing the open questions for \boundedKadapt{\k} and \boundedfiniteadapt.

\begin{question}
$\boundedKadapt{K}(\Xc,\Ud,\Yc)$ is $\coNP$-hard but we can only prove membership in $\Sigma_2^p$. Is it also $\Sigma_2^p$-hard?
\end{question}

\begin{question}
For each $\X\in\{\Xc,\Xd\}$ and $\Y\in\{\Yc,\Yd\}$, what is the membership of $\boundedKadapt{K}(\X,\Uc(x),\Yd)$?
\end{question}

\begin{question}
Is $\boundedKadapt{K}(\Xc,\Uc,\Y)$ in $\NP$ for $\Y\in\{\Yc,\Yd\}$?
\end{question}

\begin{question}
Is $\finiteadapt(\Xc,\Uc,\Yc)$ or $\finiteadapt(\Xc,\Ud,\Yc)$ on a higher level of the polynomial hiearchy than \boundedtwostage?
\end{question}

\section{Conclusion}

This paper has been devoted to study the complexity of robust problems of the form \static, \twostage, and \boundedKadapt{\k}. {Our results summarized in Tables \ref{tab:static}--\ref{tab:k-adapt} show a diverse landscape of complexity results, however certain high-level tendencies can be extracted. First, we show that in general \static is easier than \twostage and \boundedKadapt{\k} is located ``between'' both problems in terms of hardness, sometimes strictly harder than \static, sometimes strictly easier than \twostage. Interestingly, in the discrete case with decision-dependent uncertainty \static and \boundedKadapt{\k} are equivalent in terms of hardness. When \boundedKadapt{\k} is used as an approximation for \twostage the latter results can provide guidance on the question of whether this approximation is actually easier to solve than the original problem.

When focusing on \static we can see that switching from continuous to discrete decisions  the problem moves up one level in the polynomial hierarchy for classical uncertainty sets, while this is not generally the case for decision-dependent uncertainty. The same effect can be observed when switching from continuous uncertainty to discrete uncertainty, which moves up the problem in the polynomial hierarchy for both decision-independent and decision-dependent uncertainty. In contrast, switching from the bounded to the unbounded version does not increase the hardness of the problem for classical uncertainty, while for decision-dependent uncertainty this can make the problem undecidable. Especially the combination of discrete decisions, decision-dependent uncertainty and unboundedness makes the problem undecidable. An interesting observation is that for continuous decision-dependent uncertainty in the bounded case the problem is harder for continous decisions than for discrete decisions.

For \twostage switching from continuous to discrete first-stage decisions  moves the problem up in the polynomial hierarchy for continuous second-stage, while for discrete second-stage the same clear effect cannot be seen. Interestingly, our results in general do not show a change in complexity when switching from continous to discrete uncertainty which is in contrast to the static case. Switching from continuous to discrete second-stage decision increases the level in the polynomial hierarchy most of the times. Similarly to the static case, switching from the bounded to the unbounded case makes the problem harder only for decision-dependent uncertainty and if either first or second-stage decisions are discrete. Finally, it can be observed that the combination of continuous first-stage and continuous decision-dependent uncertainty moves the problem out of the polynomial hierarchy towards $\exists\R$.}

For \static and \twostage, while most of the questions have been answered, some memberships questions remain open. Specifically, the only open question for \static concerns the case of $\static(\Xc,\Udx)$ for which we are unable to construct small certificates for YES-instances; the problem might even be undecidable. \twostage presents more open questions: $\twostage(\X,\U,\Yd)\in\Sigma_3^p$, for all pairs $\X\in\{\Xc,\Xd\}$ and $\U\in\{\Uc,\Ud\}$, $\boundedtwostage(\Xc,\Ucx,\Yc)\in\Sigma_2^p$ (hardness for $\Sigma_2^p$ and membership for $\Sigma_2\R$ are known), $\boundedtwostage(\Xc,\Udx,\Yc)\in\Sigma_2^p$ (hardness for $\Sigma_2^p$ is known, but no membership; the problem might be undecidable), and $\boundedtwostage(\Xc,\Ucx,\Yd)\in\Sigma_3^p$ (hardness for $\Sigma_3^p$ and membership for $\exists_3\R$ are known). For $\boundedKadapt{\k}$ the overall picture shows that as expected the problem is at least as hard as \boundedstatic and the hardness is non-decreasing in the parameter $\k$. Especially, \boundedtwostage is at least as hard as $\boundedKadapt{\k}$ and \finiteadapt is at least as hard as \boundedtwostage. 
One of the most important open case for $\boundedKadapt{\k}$ is $\boundedKadapt{K}(\Xc,\Ud,\Yc)$ which we show is $\coNP$-hard but we can only prove membership in $\Sigma_2^p$. The problem might be $\Sigma_2^p$-hard. Another interesting open question is whether there exists a setup where $\boundedfiniteadapt$ is at least one level higher located than \boundedtwostage.

\section*{Acknowledgements}
This paper was drafted with AI writing assistance (Claude, Anthropic).

\end{document}